\documentclass[11pt, reqno]{amsart}
\usepackage{amsmath,amsthm,amssymb,mathrsfs,graphicx, enumitem, hyperref, textcomp}
\usepackage{soul}

\usepackage[all,cmtip,pdf]{xy}

\usepackage{xcolor}
\usepackage{tikz}
\usetikzlibrary{positioning}
\usepackage{etoolbox}
\let\bbordermatrix\bordermatrix
\patchcmd{\bbordermatrix}{8.75}{4.75}{}{}

\newcommand{\U}{\mathcal{U}}

\newcommand{\E}{\mathcal{E}}
\newcommand{\F}{\mathcal{F}}
\newcommand{\Ell}{\textup{Ell}}

 \newcommand{\id}{\mathrm{id}}
\newcommand{\Aut}{\mathrm{Aut}}
\newcommand{\supp}{\mathrm{supp}}

\newcommand{\dist}{\mathrm{dist}}
\newcommand{\ocap}{\mathrm{ocap}}
\newcommand{\prim}{\mathrm{Prim}}
\newcommand{\diag}{\mathrm{diag}}

\newcommand{\mdim}{\mathrm{mdim}}

\theoremstyle{plain}
\newtheorem{theorem}{Theorem}[section]
\newtheorem{lemma}[theorem]{Lemma}
\newtheorem{corollary}[theorem]{Corollary}
\newtheorem{proposition}[theorem]{Proposition}

\newtheorem*{theorem*}{Theorem}
 \newtheorem*{lemma*}{Lemma}
\theoremstyle{definition}
\newtheorem{definition}[theorem]{Definition}
\newtheorem{example}[theorem]{Example}
\newtheorem{notation}[theorem]{Notation}

\newtheorem{remark}[theorem]{Remark}
\newtheorem{remarks}[theorem]{Remarks}
\theoremstyle{remark}

\setcounter{secnumdepth}{4}

\author[Forough, Jeong, Strung]{Marzieh Forough \and Ja A Jeong \and Karen R. Strung}

\address{Department of Applied Mathematics, Faculty of Information Technology\\ Czech Technical University in Prague\\ Th\'akurova 9 \\160 00, Prague 6, Czech Republic, and}
\address{Department of Abstract Analysis\\ Institute of Mathematics, Czech Academy of Sciences, \v{Z}itn\'a 25, 115 67 Prague 1, Czech Republic}
\email{foroumar@fit.cvut.cz}

\address{Department of Mathematical Sciences and Research Institute of Mathematics\\
Seoul National University\\
Seoul 08826, Korea}
\email{jajeong@snu.ac.kr}

\address{Department of Abstract Analysis\\ Institute of Mathematics, Czech Academy of Sciences, \v{Z}itn\'a 25, 115 67 Prague 1, Czech Republic}
\email{strung@math.cas.cz}

\title[$\mathrm{C}^*$-algebras of homeomorphisms twisted by line bundles]{$\mathcal{Z}$-stability for $\mathrm C^*$-algebras of minimal line-bundle-twisted homeomorphisms with the small boundary property}

\subjclass[2020]{46L35, 46L85, 46H25, 37B05}
\keywords{Minimal homeomorphisms, small boundary property, Hilbert bimodules, Cuntz--Pimsner algebras, $\mathcal{Z}$-stability}
\thanks{KRS and MF are funded by GAČR project GF25-15403K. KRS is also funded by RVO: 67985840. Part of this work was carried out when KRS was funded by GAČR project 20-17488Y and when JAJ was partially supported by NRF 2018R1D1A1B07041172.}

\begin{document}

\begin{abstract} In this paper we show that the Cuntz--Pimsner algebras associated to minimal homeomorphisms twisted by line bundles, along with their orbit-breaking subalgebras, are $\mathcal{Z}$-stable whenever the underlying dynamical system has the small boundary property. This entails that this class is classified by the Elliott invariant. Furthermore, we show that the tensor product of two such $\mathrm{C}^*$-algebras is always $\mathcal{Z}$-stable, without assuming the small boundary property. In particular this applies to  $\mathrm{C}^*$-algebras arising from systems with positive mean dimension.
\end{abstract}

\maketitle

\tableofcontents

\section{Introduction}

A major advancement in the  theory of $\mathrm C^*$-algebras has been the establishment of the classification theorem for simple, separable, unital, nuclear $\mathrm C^*$-algebras that absorb the Jiang--Su algebra $\mathcal Z$ and satisfy the Universal Coefficient Theorem (UCT). This theorem states that such $\mathrm C^*$-algebras are completely determined by their Elliott invariant, an invariant consisting of $K$-theory, tracial data, and the natural pairing between them. The Jiang--Su algebra is a simple, separable, unital, nuclear $\mathrm C^*$-algebra with the same Elliott invariant as the complex numbers. Since the invariant behaves well with respect to tensor products,  at the level of the invariant  tensoring by $\mathcal Z$ looks just like tensoring by $\mathbb C$. Hence for classification by this invariant to hold  one must restrict attention to algebras $A$ that are $\mathcal Z$-stable, that is, those satisfying  $A \cong A \otimes \mathcal Z$. The UCT, which facilitates the transfer of information between K-theory and KK-theory,  holds for all known nuclear $\mathrm C^*$-algebras, though determining whether nuclearity itself implies the UCT remains a difficult open question.

With the classification theorem now in place, a central task is to determine which naturally-occurring $\mathrm C^*$-algebras fall within its scope. In particular, given a class of $\mathrm C^*$-algebras arising from dynamics, geometry, or algebraic constructions, one would like to identify when its members are simple, separable, unital, nuclear, and $\mathcal Z$-stable, and hence classifiable by the Elliott invariant.

In this paper we study a class of $\mathrm C^*$-algebras that arise as Cuntz–Pimsner algebras associated to Hilbert bimodules over commutative $\mathrm C^*$-algebras. These can be viewed as ``twisted'' crossed products of homeomorphisms, where we replace the homeomorphism with a full Hilbert bimodule and the crossed product by a Cuntz--Pimsner algebra.

More precisely, given a Hermitian line bundle $\mathscr V$ over an infinite compact metric space $X$ and a homeomorphism $\alpha : X \to X$, one constructs a Hilbert $C(X)$-bimodule $\Gamma (\mathscr V, \alpha)$ consisting of continuous sections of $\mathscr V$ with the left and right actions intertwined by $\alpha$. When $\mathscr V$ is a trivial line bundle, the resulting Cuntz–Pimsner algebra $\mathcal O_{C(X))}(\Gamma(X \times \mathbb C, \alpha))$ is exactly the crossed product $C(X) \rtimes_\alpha \mathbb Z$, which has been extensively studied in the context of the classification program. In particular, Elliott and Niu showed that such crossed products are $\mathcal Z$-stable if and only if the underlying minimal dynamical system 
$(X, \alpha)$ has \emph{the small boundary property}, or equivalently,  \emph{mean dimension zero}.

In this paper,  the ``if'' direction of the $\mathcal Z$-stability result is extended to the twisted setting, that is, where the Hilbert bimodule arises from a nontrivial line bundle. This framework naturally generalizes the usual crossed product construction, while introducing subtle geometric features related to the topology of $\mathscr V$. We prove the following main theorem: 
\begin{theorem*}
Let $X$ be an infinite compact metric space, $\alpha : X \to X$ a minimal homeomorphism and $\mathscr V$ a line bundle over $X$. Suppose $(X, \alpha)$ has the small boundary property. Then $\mathcal O_{C(X)}(\Gamma(\mathscr{V}, \alpha))$ is $\mathcal Z$-stable.
\end{theorem*}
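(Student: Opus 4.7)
The plan is to extend the orbit-breaking-plus-large-subalgebra strategy of Phillips and Elliott--Niu from ordinary crossed products to the line-bundle-twisted Cuntz--Pimsner setting. Write $\mathcal O := \mathcal O_{C(X)}(\Gamma(\mathscr V,\alpha))$, and for a closed set $Y \subset X$ meeting each $\alpha$-orbit at most once let $A_Y \subset \mathcal O$ denote the orbit-breaking subalgebra generated by $C(X)$ together with those sections in $\Gamma(\mathscr V,\alpha)$ that vanish on $Y$. The first task is to show that $A_Y$ is simple and admits an inductive-limit decomposition into recursive subhomogeneous building blocks, obtained by decomposing $X$ into Kakutani--Rokhlin towers indexed by first return times to $Y$. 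On each tower, sections of $\mathscr V$ form a Hermitian line bundle whose local triviality yields matrix algebras over subsets of $X$, so the structural skeleton is the same as in the untwisted case; the line bundle only modifies the clutching data between different levels of the tower.

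Next, I would invoke the small boundary property to select a nested family of sets $Y_n$ whose boundaries have small measure in every $\alpha$-invariant Borel probability measure, following the Lindenstrauss--Weiss / Kerr--Szab\'o / Elliott--Niu technology which shows that SBP yields arbitrarily fine Rokhlin towers with small-boundary base. Combined with the recursive subhomogeneous description, this should give a uniform bound on the nuclear dimension (or decomposition rank) of each $A_{Y_n}$, whence each $A_{Y_n}$ is $\mathcal Z$-stable by Winter's theorem. The anticipated main obstacle lies precisely in this uniform dimension bound in the twisted setting: the clutching data of $\mathscr V$ along the tower boundaries must be shown not to inflate the local dimension. I would handle this by choosing each $Y_n$ inside a member of a finite trivializing open cover of $\mathscr V$, so that on each tower level the twist appears as a unitary scalar and the dimension calculation reduces to the untwisted one.

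Finally, to transfer $\mathcal Z$-stability from $A_Y$ to $\mathcal O$, I would verify that $A_Y$ is a centrally large subalgebra of $\mathcal O$ in the sense of Archey--Buck--Phillips, from which $\mathcal Z$-stability of $\mathcal O$ follows by their permanence theorem. The delicate point is that in the twisted setting the role of the canonical unitary of the crossed product is played by a generating isometric section of $\Gamma(\mathscr V,\alpha)$, so the largeness estimates must be carried out using partitions of unity subordinate to a finite trivializing cover of the bundle near $Y$. Once such a cover is fixed and the tower machinery of the previous step is in hand, the required cutting and approximation arguments should proceed essentially as in the untwisted case, completing the proof.
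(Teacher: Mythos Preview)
Your overall architecture is correct: reduce to an orbit-breaking subalgebra at a single point via the centrally large subalgebra machinery, exploit its recursive subhomogeneous structure, and use the small boundary property to get enough regularity to conclude $\mathcal Z$-stability. The paper follows exactly this outline, and your handling of the twist (work inside trivializing neighborhoods so the bundle contributes only scalar cocycles) is also how the paper manages it.

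The genuine gap is in how you propose to use the small boundary property. You write that SBP, combined with the RSH description, ``should give a uniform bound on the nuclear dimension (or decomposition rank) of each $A_{Y_n}$.'' This does not work when $X$ is infinite-dimensional: the base spaces of the RSH decomposition of $A_Y$ are the closures $\overline{Y_k}\subset X$, and the SBP says nothing about their covering dimension---it controls orbit capacity (equivalently, invariant-measure) of boundaries, not topological dimension. So no direct nuclear-dimension bound on $A_Y$ is available, and Winter's finite-nuclear-dimension $\Rightarrow$ $\mathcal Z$-stable theorem cannot be applied to $A_Y$ itself.

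What the paper (following Elliott--Niu) actually does is more indirect. The SBP is invoked through the Lindenstrauss--Weiss fact that one can choose a partition of unity $\{\gamma_U\}$ subordinate to a trivializing cover such that, along any long orbit segment, $\gamma_U$ takes values in $(0,1)$ on only a tiny proportion of the iterates. One then takes the \emph{finitely generated} $\mathrm C^*$-subalgebra $S\subset A_Y$ generated by the $\gamma_U$ together with the twisted sections $\Theta\xi_U$. The point is that $\prim_{r_k}(S)$ embeds, via the values of the generators along orbit segments, into a subset of a Euclidean cube in which almost all coordinates are forced to lie in $\{0,1\}$; this subset has dimension at most $\epsilon r_k$. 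Thus $S$ has arbitrarily small \emph{dimension ratio} (not small nuclear dimension of $A_Y$). From there one gets locally finite nuclear dimension and strict comparison for $A_{\{y\}}$, and $\mathcal Z$-stability follows from Winter's purity theorem (or the Cuntz-semigroup argument when the tracial boundary is not compact), after which the centrally large step transfers it to $\mathcal O$. Your proposal is missing exactly this ``project the primitive spectrum of a finitely generated subalgebra onto a low-dimensional target'' step, which is where all the work lies.
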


The proof uses the notion of so-called \emph{orbit-breaking} subalgebras. By \cite[Theorem 6.16 ]{AAFGJSV2024}, if the orbit of $\alpha$ is ``broken'' at a single point $y \in X$, the corresponding $\mathrm C^*$-subalgebra $\mathcal O_{C(X)}(C_0(X \setminus \{y\})\Gamma(\mathscr{V}, \alpha))$ is \emph{centrally large} (see \cite[Definition 2.1]{ArchPhil:SR1}) in $\mathcal O_{C(X)}(\Gamma(\mathscr{V}, \alpha))$, and so is enough to show $\mathcal Z$-stability of $\mathcal O_{C(X)}(C_0(X \setminus \{y\})\Gamma(\mathscr{V}, \alpha))$. Establishing $\mathcal Z$-stability then relies on approximation by the recursive subhomogeneous (RSH) algebras constructed in \cite{FoJeSt:rsh}. By showing that the RSH algebras involved can always be approximated by subhomogeneous algebras with arbitrarily small dimension ratio (see Definition~\ref{def:dimrat}), we show they are $\mathcal Z$-stable, Theorem~\ref{thm:Z-stable}.  The small boundary property of the underlying dynamical system plays a crucial role in constructing partitions of unity which allow for approximating the RSH algebras in a controlled way. 

In addition, we show that tensor products of such $\mathrm C^*$-algebras are always $\mathcal Z$-stable, even without the assumption of the small boundary property. The proof again relies on decompositions and approximations of orbit-breaking subalgebras.

The structure of the paper is as follows.  Section~\ref{sec:prelim} contains preliminaries on the Cuntz–Pimsner construction for Hilbert $\mathrm{C}^*$-bimodules. We then briefly review definitions and properties of the small boundary property and mean dimension for homeomorphisms. Section~\ref{sec:MainCharacter} introduces the $\mathrm C^*$-algebras associated to minimal homeomorphisms twisted by line bundles, and recalls their orbit-breaking subalgebras. Using results from \cite{AAFGJSV2024} and \cite{FoJeSt:rsh}, Section~\ref{subsection:RSH} establishes structural results and a recursive subhomogeneous decomposition of these algebras, and in Section~\ref{sec:irrep}  we establish results about their irreducible representations. Section~\ref{sec:Zstab} applies these results to prove $\mathcal Z$-stability when the underlying system has the small boundary property. Section~\ref{sec:TP} shows that tensor products of such algebras are always $\mathcal Z$-stable.

\subsubsection*{Acknowledgments} The authors would like to thank Gi Hyun Park and Maria Grazia Viola for helpful discussions. This project was initiated as part of the workshop 18w5168  \emph{Women in Operator Algebras} at the Banff International Research Station in November 2018.


\section{\texorpdfstring{$\mathrm{C}^*$}{C*}-algebraic and dynamical preliminaries} \label{sec:prelim}

\subsection{Hilbert \texorpdfstring{$C^*$}{}-bimodules and Cuntz--Pimsner algebras}
Let $A$ be a  $\mathrm C^*$-algebra. A \emph{Hilbert $A$-bimodule} is a $\mathbb C$-linear space $\E$ which is both a left and right Hilbert $A$-module and satisfies the condition
\[
\xi_1 \langle \xi_2, \xi_3 \rangle_\E = \,_\E\langle \xi_1 , \xi_2 \rangle \, \xi_3, 
\]
for every $\xi_1, \xi_2, \xi_3 \in \E$.

 Let $\E$ be a Hilbert $A$-bimodule. The conjugate module of $\E$ is the Hilbert $A$-bimodule 
\[ \E^* = \{ \widehat{\xi} \mid \xi \in \E\}, \]
with
\[ a \widehat{\xi} := \widehat{\xi a^*}, \quad \widehat{\xi} a := \widehat{a^* \xi}, \quad \widehat{\xi} \in \E^*, a \in A, \]
and 
\[ _{\E^*} \langle\, \widehat{\xi}, \widehat{\eta} \,\rangle :=  \, _\E \langle \eta, \xi \rangle, \quad  \langle\, \widehat{\xi}, \widehat{\eta}\, \rangle_{\E^*}  :=  \,  \langle \eta, \xi \rangle_\E, \quad \xi, \eta \in \E.\]

The $A$-balanced tensor product $\E_1  \otimes_A \E_2$ of two Hilbert $A$-bimodules is again a Hilbert $A$-bimodule in the obvious way. To ease notation, we write 
\[
\E^{\otimes n} := \underbrace{\E \otimes_A \cdots \otimes_A \E}_{n \text{ times}}.
\]
Let $A$ be a $\mathrm C^*$-algebra and $\E$ a Hilbert $A$-bimodule. A \emph{covariant representation} (cf. \cite[Definition 2.1]{Katsura2004}, see also \cite{AEE:Cross}) of $\E$ on a $\mathrm{C}^*$-algebra $B$ is a pair $(\pi, \tau)$ consisting of a $^*$-homomorphism $\pi : A \to B$ and a linear map $\tau : \E \to B$ satisfying  
\begin{enumerate}
    \item $\tau(\xi)^*\tau(\eta) = \pi(\langle \xi, \eta \rangle_A)$,
    \item $\pi(a) \tau(\xi) = \tau(a \xi)$,
\end{enumerate}
for every $\xi, \eta \in \E$, $a \in A$. Note that (2) implies that $\tau(\xi)\pi(a) = \tau(\xi a)$ for every $\xi \in \E$ and $a \in A$. 

Given a covariant representation $(\pi, \tau)$ of $\E$ in a $\mathrm{C}^*$-algebra $B$, let $\mathrm C^*(\pi, \tau)$ denote the $\mathrm C^*$-subalgebra of $B$ generated by $\pi(A)$ and $\tau(\E)$. In \cite[Section 4]{Katsura2004}, Katsura constructs the \emph{universal covariant representation} $(\pi_u, \tau_u)$ of $\E$: for any covariant representation $(\pi, \tau)$ of $\E$, there exists a surjective $^*$-homomorphism $\rho: C^*(\pi_u, \tau_u) \to \mathrm C^*(\pi, \tau)$ such that $\pi = \rho \circ \pi_u$ and $\tau = \rho \circ \tau_u$. 

\begin{definition}[{cf. \cite[Definition 3.5]{Katsura2004}}]
    Let $A$ be a $\mathrm C^*$-algebra and $\E$ a Hilbert $A$-bimodule. The \emph{Cuntz--Pimsner algebra} of $\E$, denote $\mathcal O_A(\E)$ (or simply $\mathcal O (\E)$ if $A$ is understood), is the $\mathrm C^*$-algebra generated by the universal covariant representation of $\E$. \end{definition}

    The Cuntz--Pimsner algebra construction is in fact more general, as one only requires that $\E$ is a $\mathrm{C}^*$-correspondence over $A$. 

    Let $A$ be a $\mathrm{C}^*$-algebra and $\E$ a Hilbert $A$-bimodule. The Cuntz--Pimsner algebra $\mathcal{O}_A(\E)$ admits a gauge action of $\mathbb{T}$ which makes $\mathcal{O}_A(\E)$ into a $\mathbb Z$-graded $\mathrm C^*$-algebra in the sense of \cite[Definition 16.2]{Exel:Book}. We have
    \[
\mathcal{O}_A(\E) = \overline{\bigoplus_{n \in \mathbb{Z}} E_n},
    \]
    where 
    \begin{equation} \label{eq:En} E_n \cong \left\{ \begin{array}{cc} \E^{\otimes n}, & n > 0, \\
    A, & n = 0, \\
    (\E^*)^{\otimes n} & n < 0.\end{array} \right. \end{equation}

Note that for a Hilbert $A$-bimodule $\E$, $\mathcal O(\E)$ is the same as the crossed product $A \rtimes_\E \mathbb Z$ defined by \cite{AEE:Cross}. While the $\mathrm C^*$-algebras in this paper behave a lot like crossed products of $C(X)$ by the integers, we prefer the Cuntz--Pimsner notation as it is more concise.

\subsection{The small boundary property and mean dimension zero}

Let $(X, \alpha)$ be a dynamical system where $X$ is an infinite compact metric space, and let $C$ be a subset of $X$. The \emph{orbit capacity} of the set $C$ is
 \[
 \ocap(C)=\lim_{n \to \infty} \frac{1}{n} \sup_{x \in X} \sum_{i=0}^{n-1}\chi_{C}(\alpha^i(x)).\]

 \begin{definition} \cite[Definition 5.2]{LindWeiss:MTD}
    A dynamical system $(X, \alpha)$ has the \emph{small boundary property} (SBP) if for every point $x \in X$ and every open set $U$ containing $x$, there is a neighborhood $V \subseteq U$ of $x$ with $\ocap(\partial V)=0$.
\end{definition}

An equivalent definition can be given in terms of invariant measures: $(X, \alpha)$ has the \emph{small boundary property} (SBP) if for every point $x \in X$ and every open set $U$ containing $x$, there is a neighborhood $V \subseteq U$ of $x$ such that $\mu(\partial V)=0$ for every $\alpha$-invariant measure $\mu$ \cite[Proposition 1]{ShubWeiss1991}. This is the definition used in  \cite{EllNiu:SBP}, \cite{EllNiu:SBP25}, \cite{KerrSz:almostfinit}), which link the small boundary property for a dynamical system to regularity properties of the associated crossed product $\mathrm C^*$-algebra.

The \emph{mean (topological) dimension} of a topological dynamical system, which can be thought as a dynamical analogue of covering dimension, was first introduced by Gromov, and subsequently studied more intensely by Lindenstrauss and Weiss. This invariant provides information of a dynamical system even for the cases that topological entropy or topological dimension are infinite. From the point of view of $\mathrm{C}^*$-algebras, the mean dimension can further provide information about the structure of $\mathrm C^*$-algebras arising from topological dynamical systems. 

Let $X$ be a compact metric space.  Recall that the \emph{order} of an open cover $\mathcal{U}$ of $X$, is given by
\[
\mathrm{ord}(\U)= \left(\sup_{x \in X} \sum_{U \in \U} \chi_{U}(x)\right)-1.
\]
We say that a cover $\mathcal{W}$ refines the cover $\mathcal{U}$, denoted  $\mathcal{W} \succ \mathcal{U}$, if every member of $\mathcal{W}$ is a subset of a member of $\mathcal{U}$. Let 
\[
\mathcal{D}(\U) :=\inf_{\mathcal{W} \succ \U} \mathrm{ord}(\mathcal{W}).
\]
By \emph{dimension} of a space $X$, denoted $\dim X$, we mean the \emph{topological (covering) dimension} of $X$: given $m \in \mathbb Z_{\geq 0}$, we have $\dim X \leq m$ if and only if $\mathcal{D}(\U) \leq m$ for all covers $\U$ of $X$. We write $\dim X = m$ if $m = \min \{ n \in \mathbb Z_{\geq 0} \mid \dim X \leq n\}$. If no such $m$ exists, we put $\dim X := \infty$.

For two open covers $\U$ and $\mathcal{W}$ of $X$, let $\U \vee \mathcal{W}= \{ U \cap W \mid U \in \U,  W \in \mathcal{W}\}$. Let $(X, \alpha)$ be a topological dynamical system. For every positive integer $n$ and cover $\U$ of $X$, we denote by $\U^n$ the cover
\[ \U^n:=\U \vee \alpha^{-1}\U \vee \cdots \vee \alpha^{-n}\U.
\]

\begin{definition}[{\cite[Section 0.4]{GromovMD}, \cite[Definition 2.6]{LindWeiss:MTD}}]
    Let $(X, \alpha)$ be a topological dynamical system. The \emph{mean dimension of $(X, \alpha)$} denoted by $\mdim(X, \alpha)$ (or $\mdim(X)$ if $\alpha$ is understood) is given by 
    \[
\mdim(X, \alpha)= \sup_{\U} \lim_{n \to \infty} \frac{\mathcal{D}(\U^{n-1})}{n},
    \]
    where $\U$ runs over all finite open covers of $X$.
\end{definition}
Observe that for any two finite open covers $\U$ and $\mathcal{W}$ of $X$, we have $\mathcal{D}(\U \vee \mathcal{W}) \leq \mathcal{D}(\U)+\mathcal{D}(\mathcal{W})$, so the above limit exists and mean dimension is well defined.

Any dynamical system $(X, \alpha)$ with the small boundary property has mean dimension zero~\cite[Theorem 5.4]{LindWeiss:MTD}. Conversely, if $(X, \alpha)$ has mean dimension zero and is the extension of a minimal system, than $(X, \alpha)$ has the small boundary property~\cite[Theorem 6.2]{Lin:MD-SEF}. In particular, mean dimension zero and the small boundary are equivalent for minimal dynamical systems.

The key role of the small boundary property in Theorem~\ref{thm:FindingS} is in construction of partitions of unity with the following property:

\begin{proposition}\cite[Proposition 5.3]{LindWeiss:MTD} \label{prop:SBP-POU}
 If $(X, \alpha)$ has the SBP then for every finite open cover $\U$ of $X$ and every $\epsilon >0$ there is a partition of unity $(\phi_U)_{U\in \U}$ subordinate to cover $\U$ such that 
 \[
 \ocap(\bigcup_{U \in \U} \phi_U^{-1}(0,1)) < \epsilon.
 \]
\end{proposition}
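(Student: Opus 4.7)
The overall strategy is to build a partition of unity subordinate to $\U$ whose functions take values in $\{0,1\}$ outside an open set of orbit capacity below $\epsilon$; the fractional region will then automatically lie inside that open set. First I would shrink $\U$ to an auxiliary cover $\{U_i^\circ\}_{i=1}^k$ with $\overline{U_i^\circ} \subseteq U_i$, and apply the SBP at each $x \in X$ to produce an open neighborhood $V_x \ni x$ with $\overline{V_x} \subseteq U_{i(x)}^\circ$ for some index $i(x)$, and $\ocap(\partial V_x) = 0$. Compactness gives a finite subcover $\{V_j\}_{j=1}^m$ of $X$, and $F := \bigcup_j \partial V_j$ is closed with $\ocap(F) = 0$ by subadditivity of $\ocap$.

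The crucial intermediate lemma, which I expect to be the main obstacle, is that any closed set $F$ with $\ocap(F) = 0$ sits inside an open $N$ with $\ocap(\overline{N}) < \epsilon$. My plan is to take $\overline{N_n} := \{x : \dist(x,F) \leq 1/n\}$, observe that $\chi_{\overline{N_n}}$ decreases pointwise to $\chi_F$, and show that $\sup_x \frac{1}{m}\sum_{i<m}\chi_{\overline{N_n}}(\alpha^i x) \to \sup_x \frac{1}{m}\sum_{i<m}\chi_F(\alpha^i x)$ as $n \to \infty$ for each fixed $m$, using upper semicontinuity of $\chi_{\overline{N_n}}$ and compactness of $X$ to control the attained supremum along maximizing sequences. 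Exchanging the $n$- and $m$-limits then yields $\ocap(\overline{N_n}) \to 0$, so some $N = N_n$ does the job. This exchange is delicate because $\ocap$ is an infimum-of-sups rather than a measure.

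With such $N$ in hand, I would form the closed sets $B_j := \overline{V_j} \setminus \bigcup_{l<j} V_l$ and group them by refinement index as $K_i := \bigcup_{j : i(j) = i} B_j$. These satisfy $K_i \subseteq U_i^\circ$, $\bigcup_i K_i = X$, and $K_i \cap K_{i'} \subseteq F \subseteq N$ for $i \neq i'$. In particular the finite disjoint family $\{K_i \setminus N\}_{i=1}^k$ covers the closed set $X \setminus N$ by closed sets, so each member is clopen in $X \setminus N$ and $\chi_{K_i \setminus N}$ is continuous on $X \setminus N$.

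Finally, I would extend these characteristic functions over $\overline{N}$ using the relative form of Michael's continuous selection theorem: the multivalued map $\Phi(x) := \mathrm{conv}\{e_i : x \in U_i^\circ\} \subseteq \Delta^{k-1}$ on $\overline{N}$ is lower semicontinuous with nonempty closed convex values (because each $U_i^\circ$ is open and the $U_i^\circ$'s cover $X$), and the boundary data $x \mapsto e_{i(x)}$ on $\partial N$ is a continuous selection, because $i(x)$ is locally constant on $X \setminus F \supseteq \partial N$. Michael supplies a continuous selection $g : \overline{N} \to \Delta^{k-1}$ extending this data, and setting $\phi_i(x) := g(x)_i$ on $\overline{N}$ and $\phi_i(x) := \chi_{K_i \setminus N}(x)$ on $X \setminus N$ produces a continuous partition of unity subordinate to $\U$ with $\phi_i \in \{0,1\}$ on $X \setminus N$. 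Hence $\bigcup_i \phi_i^{-1}(0,1) \subseteq N$ and $\ocap\bigl(\bigcup_i \phi_i^{-1}(0,1)\bigr) \leq \ocap(\overline{N}) < \epsilon$.
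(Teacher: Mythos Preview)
The paper does not supply a proof of this proposition; it is quoted verbatim from \cite[Proposition~5.3]{LindWeiss:MTD} and invoked later in Theorem~\ref{thm:FindingS}. Your argument is correct and tracks the original Lindenstrauss--Weiss strategy: cover $X$ by SBP neighbourhoods, thicken the union $F$ of their boundaries to an open $N$ with $\ocap(\overline N)<\epsilon$, and build a partition of unity that is $\{0,1\}$-valued off $N$.

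Two small comments. The limit exchange you call delicate is in fact routine. Writing $a_{n,m}=\sup_x \tfrac1m\sum_{i<m}\chi_{\overline{N_n}}(\alpha^i x)$ and $b_m$ for the corresponding quantity with $F$, subadditivity of $m\mapsto \sup_x\sum_{i<m}\chi_C(\alpha^i x)$ gives $\ocap(\overline{N_n})=\inf_m a_{n,m}$ and $\ocap(F)=\inf_m b_m=0$. Fix $m_0$ with $b_{m_0}<\epsilon/2$ and then use your (correct) upper-semicontinuity argument to choose $n$ with $a_{n,m_0}<\epsilon$; this already yields $\ocap(\overline{N_n})\le a_{n,m_0}<\epsilon$, with no interchange of limits required. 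Second, Michael's selection theorem is a legitimate but somewhat heavy tool for the final extension step; since the closed sets $K_i\setminus N$ are pairwise disjoint in $X$ and contained in $U_i^\circ$, one can separate them by Urysohn functions supported in $U_i^\circ$ and normalise, which is closer in spirit to the original argument. Either way, your verification that $g(x)_i>0$ forces $x\in U_i^\circ$ (because $\Phi(x)$ is a face of the simplex) is the key point for subordination, and your conclusion $\bigcup_i\phi_i^{-1}(0,1)\subseteq N$ is valid.
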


The class of minimal dynamical systems with the small boundary property, or equivalently, mean dimension zero is quite large. Some examples include:
\begin{enumerate}
    \item When the dimension of $X$ is finite then $\mathcal{D}(\U^n) \leq \dim(X)$ for all finite open cover $\U$ and positive integers $n$. Hence $\mdim(X, \alpha)=0$ which implies the small boundary property.
     \item If the topological entropy of $(X, \alpha)$ is finite, then $\mdim(X, \alpha) = 0$ \cite[Section 4]{LindWeiss:MTD}. 
    \item If $(X, \alpha)$ has at most countably many ergodic probability measures, then $(X, \alpha)$ has the small boundary property \cite{ShubWeiss1991}. In particular, any uniquely ergodic system has the small boundary property.
    \item Let $\mathcal{M}^1(X, \alpha)$ denote the convex set of all $\alpha$-invariant Borel probability measures on $X$ equipped with the weak*-topology. Then $\mathcal{M}^1(X, \alpha)$ is a Choquet simplex and the extreme points correspond to the set of ergodic measures. If $\mathcal{M}^1(X, \alpha)$ is a Bauer simplex (that is, the set of its boundary points is closed), then $(X, \alpha)$ has the small boundary property \cite[Theorem 4.6]{EllNiu:SBP}.
\end{enumerate}

\section{\texorpdfstring{$\mathrm{C}^*$}{C*}-algebras associated to minimal homeomorphisms twisted by line bundles} \label{sec:MainCharacter}

Let $X$ be a compact metric space and $\mathscr V = [V,p,X]$ a Hermitian line bundle. We refer the reader to \cite{Hus:fibre} for a thorough introduction to vector bundles. The set of all
continuous sections of $\mathscr V$, denoted $\Gamma (\mathscr V )$, is a right $C(X)$-module where the right action is given by multiplication. If $\{(h_U : U \times C \to \mathscr V|_U\}_{U\in 
\mathcal U}$ is an atlas for $\mathscr V$, where $\mathcal U$ is a finite open cover of $X$ and $\{\gamma_U\}_{U\in \mathcal U}$ is a partition of unity subordinate to $\mathcal U$, then

\[
\langle \xi, \eta \rangle_{\Gamma(\mathscr V)}(x)=\sum_{U \in \mathcal{U}} \gamma_U(x)\big\langle h_U^{-1}(\xi(x)), h_U^{-1}(\eta(x))\big\rangle_{\mathbb{C}^n}, 
\]
for every $\eta, \xi \in \Gamma(\mathscr V)$ and $x \in X$, makes $\Gamma(\mathscr V)$ into a right Hilbert $C(X)$-module. 
Chart maps are always assumed to preserve inner products: for each $U\in \U$, $\langle h_U(x,v),h_U(x,w)\rangle_{p^{-1}(x)}=\langle v,w\rangle_{\mathbb C}$. Thus the inner product is independent of the choice of the atlas.

Let $\alpha : X \to X$ be a homeomorphism, and define a left action of $C(X)$ on $\Gamma(\mathscr V)$ by
\[
f \xi:= \xi f\circ \alpha.
\]
The $C(X)$-valued inner product 
\[_{\Gamma(\mathscr V)} \langle \xi, \eta \rangle:=\langle \eta, \xi \rangle_{\Gamma(\mathscr V)} \circ \alpha^{-1},
\]
for every $\eta, \xi \in \Gamma(\mathscr V)$ and $f \in C(X)$ gives $\Gamma(\mathscr V)$ the structure of a Hilbert $C(X)$-bimodule, which we denote by $\Gamma(\mathscr V, \alpha)$.  

\begin{definition} The Hilbert $C(X)$-bimodule $\Gamma(\mathscr{V}, \alpha)$ is called the \emph{Hilbert $C(X)$-bimodule obtained by twisting the homeomorphism $\alpha$ by the line bundle $\mathscr V$}. The line bundle $\mathscr V$ will be referred to as \emph{the twist}.
\end{definition}

A right $A$-bimodule $\E$ is right \emph{full} if $\overline{\langle \E, \E \rangle_\E }= A$, where $\overline{\langle \E, \E \rangle_\E}$ denotes the closed linear span of $\{ \langle \xi, \eta \rangle_\E \mid \xi, \eta \in \E\}$. Left fullness is defined analogously. 
    When $A$ is left and right full, we have
\[  \E \otimes_A \E^* \cong A, \quad \text{and} \quad \E^* \otimes_A \E \cong A,\]
which is to say, $\E^*$ coincides with the dual of $\E$.

Given a compact metric space $X$, a homeomorphism $\alpha : X \to X$ and a line bundle $\mathscr{V}$ over $X$, it is straightforward to check that the Hilbert $C(X)$-module $\Gamma(\mathscr V, \alpha)$ is both left and right full. In fact, any Hilbert $C(X)$-bimodule which is both left and right full is necessarily of the form $\Gamma(\mathscr V, \alpha)$ for some line bundle $\mathscr V$ and some homeomorphism $\alpha : X \to X$ \cite[Proposition 3.7]{AAFGJSV2024}.

To construct orbit-breaking subalgebras, we require the distinguished subbimodules of $\Gamma(\mathscr V, \alpha)$ defined below.

\begin{definition} 
Set $\mathcal{E} :=\Gamma(\mathscr V, \alpha)$ and let $Y \subseteq X$ be a non-empty closed subset. The \emph{orbit-breaking submodule}, introduced in \cite{AAFGJSV2024} and denoted $\E_Y$, is defined by
\[
\E_Y:=C_0(X \setminus Y)\E,
\]
that is, $\E_Y$ is the Hilbert subbimodule of $\E$ obtained by restricting the left action to those functions $f \in C(X)$ which vanish on $Y$.
\end{definition}

We call the Cuntz--Pimsner algebra $\mathcal{O}(\E_Y)$ the \emph{orbit-breaking (sub)algebra of $\mathcal{O}(\E)$ at} $Y$.
It follows from \cite[Corollary 3.11]{AAFGJSV2024} that the Cuntz--Pimsner algebra $\mathcal{O}(\E)$ is simple if and only if $\alpha$ is a minimal homeomorphism. Moreover, if $\alpha$ is minimal, then the orbit-breaking subalgebra $\mathcal{O}(\E_Y)$ is simple if and only if $Y \cap \alpha^n(Y)=\emptyset$ for all $n \in \mathbb{Z} \setminus \{0\}$.

Orbit-breaking subalgebras $\mathcal O(\E_Y)$ are \emph{centrally large subalgebras} in the sense of \cite{ArchPhil:SR1} for closed subsets $Y$ satisfying the conditions in Theorem~\ref{centrally large} below. Centrally large subalgebras share many properties with their containing $\mathrm C^*$-algebras such as simplicity, $\mathcal Z$-stability and stable rank one. The precise definition will not be needed in this paper, so we refer the reader to \cite{ArchPhil:SR1} for the definition and further properties. In particular, the $\mathrm C^*$-algebras of  Theorem~\ref{centrally large}  are simple. It was shown in \cite{AAFGJSV2024} that if we further assume $\dim(X)<\infty$, they are  $\mathcal{Z}$-stable by \cite[Theorem 5.3]{AAFGJSV2024} and \cite[Corollary 3.5]{ArBkPh-Z}. Theorem~\ref{thm:locally sub hom} generalizes this to the case where $(X, \alpha)$ is only assumed to have the small boundary property.

\begin{theorem}\label{centrally large} \cite[Theorem 6.16]{AAFGJSV2024}
  Let $X$ be an infinite compact metric space, $\alpha : X  \to X$ a minimal homeomorphism and $\E=\Gamma(\mathscr V, \alpha)$, for a line bundle $\mathscr V = [V, p,X]$. Let $Y \subseteq X$ be a non-empty closed subset meeting each $\alpha$-orbit at most once and such that for every $N \in \mathbb{Z}_{>0}$ there exists an open set $W_N \supseteq Y$ for which $\mathscr{V}|_{\alpha^n(W_N)}$ is trivial whenever $-N \leq n \leq N$. Then $\mathcal{O}(\E_Y)$ is a centrally large subalgebra of $\mathcal{O}(\E)$.  
\end{theorem}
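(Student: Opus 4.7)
The plan is to verify the defining conditions of a centrally large subalgebra from \cite[Definition 2.1]{ArchPhil:SR1} directly. Given a finite subset $F \subseteq \mathcal O(\E)$, an $\varepsilon > 0$, and a nonzero positive contraction $a \in \mathcal O(\E)$, the task is to produce a positive contraction $c \in \mathcal O(\E_Y)$ such that (i) $\|cx - xc\| < \varepsilon$ for every $x \in F$; (ii) $(1-c)x$ and $x(1-c)$ lie within $\varepsilon$ of $\mathcal O(\E_Y)$ for every $x \in F$; and (iii) $1 - c \precsim_{\mathcal O(\E)} a$. The strategy is the usual Putnam-type orbit-breaking argument, carried out in local trivializations of $\mathscr V$ afforded by the hypothesis.

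First, a Fej\'er reduction: averaging against the gauge $\mathbb T$-action shows that every element of $\mathcal O(\E)$ is approximated in norm by finite spectral sums $\sum_{|n|\leq N}\xi_n$ with $\xi_n \in E_n$, so I may assume $F \subseteq \bigcup_{|n|\leq N} E_n$ for some $N$. Fix an open set $W_N \supseteq Y$ as furnished by the hypothesis, on whose translates $\alpha^k(W_N)$, $|k|\leq N$, the bundle $\mathscr V$ is trivial; since $Y$ meets each orbit at most once, after shrinking the $2N+1$ sets $\alpha^k(W_N)$ may be assumed pairwise disjoint. Choose nested open sets $Y \subseteq V \subseteq \overline V \subseteq W_N$ and, via Urysohn, a function $c \in C(X,[0,1])$ which equals $0$ on $\bigcup_{|k|\leq N}\alpha^k(\overline V)$ and equals $1$ outside $\bigcup_{|k|\leq N}\alpha^k(W_N)$. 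Since $c$ vanishes on $Y$, it belongs to $C_0(X \setminus Y)$ and hence to $\mathcal O(\E_Y)$.

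To verify (i) and (ii), recall that the left $C(X)$-action on $E_n = \E^{\otimes n}$ is (pointwise, under the right action) multiplication by $f\circ\alpha^n$, so viewing $\xi_n$ as a section one computes $c\xi_n - \xi_n c = \xi_n \cdot (c\circ\alpha^n - c)$. By construction $c$ and $c \circ \alpha^n$ take the same value (both $0$ or both $1$) outside a boundary layer of the disjoint iterates $\alpha^k(W_N)$ with $|k|\leq N+|n|$; on each such iterate $\mathscr V$ is trivial, so the commutator norm can be estimated componentwise in a local frame and made less than $\varepsilon$ by shrinking $V$ toward $Y$. The same local-triviality argument shows that $(1-c)\xi_n$ is supported in these iterates and, within a trivialization, is a section vanishing on the relevant orbit segment of $Y$, placing it in $\mathcal O(\E_Y)$ up to a controlled error. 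For (iii), $1-c$ is supported in a disjoint union of $2N+1$ translates of $W_N$; minimality of $\alpha$ together with a Rokhlin-type argument inside $\mathcal O(\E)$, using generators that implement powers of the dynamics, yields partial isometries that transport this support into the set $\{a > \|a\|/2\}$, giving $1-c \precsim a$.

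The principal obstacle is the commutator estimate: in the untwisted crossed product the cutoff merely multiplies a unitary from both sides and the argument is entirely classical, whereas in the twisted setting the left action of $c$ on a section is coordinate-dependent and there is no canonical global procedure for truncating a section to lie in $\mathcal O(\E_Y)$. The triviality hypothesis on the images $\alpha^k(W_N)$ is exactly what rescues the argument, since inside each such trivialization the bundle data drops out and the classical orbit-breaking computation goes through verbatim.
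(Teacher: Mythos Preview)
The paper does not prove this result; it is cited from \cite[Theorem~6.16]{AAFGJSV2024} without argument, so there is nothing here to compare against. Your sketch, however, has a real gap in the commutator estimate~(i). Your $c$ is a hard cutoff: zero on a neighborhood of the finite orbit segment $\{\alpha^k(Y):|k|\le N\}$ and one outside a slightly larger neighborhood. Such a function is never approximately $\alpha$-invariant. Along the dense orbit of any $y\in Y$, the value of $c$ must climb from $0$ at $\alpha^N(y)$ to $1$ at the first exit from $\bigcup_{|k|\le N}\alpha^k(W_N)$, and that exit time is fixed by $W_N$, $N$, and $y$---it does not see $V$ at all; hence $\|c\circ\alpha-c\|_\infty$ is bounded below by a positive constant no matter how far you shrink $V$ toward $Y$. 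Local triviality of $\mathscr V$ is irrelevant to this purely scalar estimate; the obstruction is already present for the trivial bundle. What the argument actually needs is the Putnam--Phillips tower construction: take a base $Z\subseteq W_N$ with first return time $M\gg N/\varepsilon$ and let $c$ increase with slope of order $1/L$ (for $N\ll L\ll M$) along each orbit segment, so that $\|c\circ\alpha^n-c\|_\infty\le |n|/L<\varepsilon$ while $1-c$, supported on only $O(L)$ levels of each tower, remains Cuntz-small.

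There is also a mismatch in your condition~(ii): with $1-c$ the Cuntz-small element as in~(iii), the large-subalgebra axiom requires $cx\in_\varepsilon\mathcal O(\E_Y)$, not $(1-c)x$. Your own construction confirms this---since $1-c=1$ on $\overline V\supseteq Y$, the product $(1-c)\xi_n$ agrees with $\xi_n$ at $Y$ and certainly does not vanish there, whereas $c\xi_n$ does. The triviality of $\mathscr V|_{\alpha^k(W_N)}$ is indeed what allows one to place $c\xi_n$ (once $c$ is built from the tower) into the degree-$n$ part of $\mathcal O(\E_Y)$ via local frames, so your instinct about where the twist enters is sound; it just resolves~(ii), not~(i).
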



\section{Recursive subhomogeneity of orbit breaking subalgebras}\label{subsection:RSH}

In this section we briefly review the RSH decompositions of orbit-breaking subalgebras of \cite{FoJeSt:rsh}. 

Let $\alpha$ be a minimal homeomorphism on a compact metric space $X$ and $Y$ be a closed subset of $X$ with non-empty interior. For a point $x \in Y$, its first return time to $x$ is given by 
\[
r(x) := \min \{ n > 0 \mid \alpha^n(x) \in Y \}.
\]
Since $\alpha$ is minimal, $r(x)$ exists for every $x \in Y$. Moreover, since $X$ is compact, there are only finitely many distinct return times for points in $Y$. Let $0 < r_1 < \dots < r_K$ be the distinct values of the first return times to $Y$. For $1 \leq k \leq K$, set
\begin{equation}\label{Y-k}
  Y_k=\{x \in Y \mid r(x)=r_k\}.  
\end{equation}
 Clearly $Y$ is  the disjoint union of $Y_k$'s, that is $Y=\sqcup_{k=1}^K Y_k$.  Each $Y_k$ is not necessarily closed. But $\cup_{i=1}^k Y_i$ is closed for every $k$, hence $\overline{Y_k}\setminus Y_k=\overline{Y_k}\cap (\cup_{i=1}^{k-1}Y_i)$ is closed. One can also check that 
\begin{equation}\label{X and Y_k}
    X=\sqcup_{k=1}^K \sqcup_{i=0}^{r_k-1} \alpha^i(Y_k).
\end{equation}

Given a line bundle $\mathscr V$ over $X$, we will make use of the following line bundle 
\[\mathscr V^{(n)}=(\alpha^{n-1})^*\mathscr V\otimes \cdots \otimes \alpha^*\mathscr V\otimes \mathscr V,\ n\geq 1,\] 
where the fibre of $\mathscr V^{(n)}$ at $x$ is 
$\mathscr V^{(n)}_x=\mathscr V_{\alpha^{n-1}(x)}\otimes\cdots\otimes\mathscr V_{\alpha(x)}\otimes \mathscr V_x$. We set $\mathscr V^{(0)}:=X\times \mathbb C$ the trivial bundle. 
By \cite[Proposition 3.1]{FoJeSt:rsh}, $\Gamma(\mathscr{V}, \alpha)^{\otimes n}$ is isomorphic to $\Gamma(\mathscr{V}^{(n)}, \alpha^n)$ as Hilbert $C(X)$-bimodule.

We first recall  from \cite{FoJeSt:rsh} some ingredients which we need in subsequent discussions. 
Let $\mathscr V$ be a line bundle over $X$ and $\mathcal U$ be a finite open cover  of $X$ such that 
for each  $U\in \mathcal U$, $\mathscr V|_{U}$ is trivial  with a chart map  
\[h_{U}:U\times \mathbb C\to \mathscr{V}|_{U}.\] 
Then   the atlas
$\mathcal A=\{(U, h_{U})\mid U\in \mathcal U \}$ gives transition functions $g_{U_i,U_j}:U_i\cap U_j\to U(1)$ satisfying
\begin{equation}\label{h transition}
h_{U_i}(x,\lambda)=h_{U_j}(x,g_{U_j,U_i}(x)\lambda)   
\end{equation}
for all $x\in U_i\cap U_j$ and  $\lambda\in \mathbb C$. 
Fix $n\in \mathbb N$, and set
\[\mathcal U^{(n)}:=\{\mathbf{U}=(U_0,U_1,\dots,U_{n-1})\mid U_j\in \mathcal U \text{ and } \cap_{j=0}^{n-1}\alpha^{-j}(U_j)\neq \emptyset\}.\] 
Note that if $n=1$, then $\mathcal U^{(1)}=\mathcal U$, and that 
\[x\in \cap_{j=0}^{n-1}\alpha^{-j}(U_j)\ \Leftrightarrow \,\alpha^j(x)\in U_j\ \text{ for all } j=0, \dots, n-1.\] 
In this case, for convenience, we adopt the notation $x\in \mathbf{U}$, namely 
\[
 x\in \mathbf{U} \ \Leftrightarrow \ x\in \cap_{j=0}^{n-1}\alpha^{-j}(U_j).
\]

For each  $\mathbf{U}\in\mathcal U^{(n)}$ and $1\leq l\leq n$, 
there is  a map 
\[v^{(l)}_{\mathbf{U}}:\cap_{j=0}^{n-1}\alpha^{-j}(U_j)\to \mathscr V^{(l)}|_{\cap_{j=0}^{n-1}\alpha^{-j}(U_j)}\] given by 
\[v^{(l)}_{\mathbf{U}}(x):=h_{U_{l-1}}(\alpha^{l-1}(x),1)\otimes \cdots\otimes h_{U_1}(\alpha(x),1)\otimes h_{U_0}(x,1)\] 
which is a non-zero continuous local section of the line bundle $\mathscr V^{(l)}$ over the open set $\cap_{j=0}^{n-1}\alpha^{-j}(U_j)$. 
The set of all such open sets 
$\{\cap_{j=0}^{n-1}\alpha^{-j}(U_j)\mid \mathbf{U}=(U_0,\dots, U_{n-1})\in \mathcal U^{(n)}\}$ is a finite  cover of $X$  
and forms an atlas of $\mathscr V^{(l)}$ together with the following chart maps 
\[(x,\lambda)\mapsto \lambda v^{(l)}_{\mathbf{U}}(x):\cap_{j=0}^{n-1}\alpha^{-j}(U_j)\times \mathbb C\to \mathscr V^{(l)}|_{\cap_{j=0}^{n-1}\alpha^{-j}(U_j)}.\]

 If $x\in \mathbf{U}\in \mathcal U^{(n)}$, then  $\alpha^i(x)\in (U_i, \dots, U_{n-1})\in \mathcal U^{(n-i)}$ for $i=0,\dots, n-1$. 
 Reflecting this point we  use the following notation:
\[\alpha^i(\mathbf{U}):=(U_i, \dots, U_{n-1})\in \mathcal U^{(n-i)}.\] 
Also for $1\leq m\leq n-1$ and $0\leq i\leq n-1$ with $1\leq m+i\leq n$, we write 
\begin{equation}\label{v section}
v^{(m)}_{\alpha^i(\mathbf{U})}(\alpha^i(x)):=h_{U_{m+i-1}}(\alpha^{m+i-1}(x),1)\otimes\cdots\otimes h_{U_i}(\alpha^i(x),1)\in \mathscr V^{(m)}_{\alpha^i(x)}.
\end{equation}
Then for all $l=m+i$, $m\leq m+i\leq n$, we can write 
\begin{align*}
   &\, v^{(l)}_\mathbf{U}(x)\\
   =&\, h_{U_{l-1}}(\alpha^{l-1}(x),1)\otimes \cdots\otimes h_{U_i}(\alpha^i(x),1)\otimes  h_{U_{i-1}}(\alpha^{i-1}(x),1)\\
   &\otimes \cdots\otimes h_{U_0}(x,1)\\
   =&\,h_{U_{l-1}}(\alpha^{l-1}(x),1)\otimes \cdots\otimes h_{U_i}(\alpha^i(x),1)\otimes v_\mathbf{U}^{(i)}(x)\\
   =&\,v^{(m)}_{\alpha^i(\mathbf{U})}(\alpha^i(x))\otimes v_\mathbf{U}^{(i)}(x).
\end{align*}

Since $v_{\mathbf{U}}^{(l)}$ is a non-zero local section of $\mathscr V^{(l)}$, each unit vector $v_{\mathbf{U}}^{(l)}(x)$  serves  as a basis element of the one dimensional space $\mathscr V^{(l)}_x$ for each $x\in \cap_{j=0}^{n-1}\alpha^{-j}(U_j)$. 
In turn, one has a basis of the $n$-dimensional space  $\mathscr{D}^{(n)}_x=\mathscr{V}^{(0)}_x \oplus \mathscr{V}^{(1)}_x   \oplus \cdots \oplus \mathscr{V}^{(n-1)}_x$ at $x\in \mathbf{U}\in \mathcal U^{(n)}$ given by 
\begin{align*}
e_{\mathbf{U},0}(x) &:=(v_\mathbf{U}^{(0)}(x),\ 0,\ 0,\,\ldots,\ 0),\\
e_{\mathbf{U},1}(x) &:=(0,\ v_\mathbf{U}^{(1)}(x),\ 0,\,\dots\, ,\ 0),\\ 
   &\ \ \vdots \\
e_{\mathbf{U},n-1}(x) &:=(0,\ 0,\,\dots\,,\ 0, \ v_\mathbf{U}^{(n-1)}(x)),
\end{align*} 
with the convention that $v_{\mathbf{U}}^{(0)}(x):=1$. 
Then the map 
\[(x,(\lambda_0, \dots, \lambda_{n-1}))\mapsto \lambda_0 e_{\mathbf{U},0}(x)+\cdots+\lambda_{n-1} e_{\mathbf{U},n-1}(x)\] from $\cap_{j=0}^{n-1}\alpha^{-j}(U_j)\times\mathbb C^n$ onto $\mathscr D^{(n)}|_{\cap_{j=0}^{n-1}\alpha^{-j}(U_j)}$ is a chart map of the bundle $\mathscr D^{(n)}$ (by continuity of $e_{\mathbf{U},i}$'s over $\cap_{j=0}^{n-1}\alpha^{-j}(U_j)$) from which we obtain a chart map 
\[
H_{\mathbf{U}}^{(n)} : \cap_{j=0}^{n-1}\alpha^{-j}(U_j) \times M_n(\mathbb{C}) \to \mathscr{M}_{\mathbf{U}}^{(n)}:=\mathscr{M}^{(n)}|_{\cap_{j=0}^{n-1}\alpha^{-j}(U_j)}
\]   
of the endomorphism bundle $\mathscr{M}^{(n)}=End(\mathscr D^{(n)})$ over $\cap_{j=0}^{n-1}\alpha^{-j}(U_j)$. 
Hence, the atlas  
\[\{\big( \cap_{j=0}^{n-1}\alpha^{-j}(U_j), H_\mathbf{U}^{(n)}\big)\mid  \mathbf{U}\in \U^{(n)}\}\] of $\mathscr M^{(n)}$ gives a matrix representation of each linear map $\varsigma(x)\in \mathscr M_x^{(n)}$ for a continuous section $\varsigma$ of $\mathscr M^{(n)}$: 
For a linear map $\varsigma(x)\in \mathscr{M}^{(n)}_x$, $x\in \cap_{j=0}^{n-1}\alpha^{-j}(U_j)$, there exists a unique matrix  $\widetilde{\varsigma}_{\mathbf{U}}(x)\in M_n(\mathbb C)$ such that \[H_{\mathbf{U}}^{(n)}(x, \widetilde{\varsigma}_{\mathbf{U}}(x))  = \varsigma(x).\] 
Namely, if  $\widetilde{\varsigma}_{\mathbf{U}}(x)$ is written as 
\[
\widetilde{\varsigma}_\mathbf{U}(x) = \scriptstyle{\begin{bmatrix}
a_{00}(x) & a_{01}(x) &a_{02}(x) & \cdots & a_{0(n-1)}(x) \\
a_{10}(x) &a_{11}(x) & a_{12}(x) & \cdots & a_{1(n-1)}(x) \\
a_{20}(x) &a_{21}(x) & a_{22}(x) & \cdots & a_{2(n-1)}(x) \\
\vdots  &\vdots  & \vdots  & \ddots & \vdots  \\
a_{(n-1)0}(x) &a_{(n-1)1}(x) & a_{(n-1)2}(x)  & \cdots & a_{(n-1)(n-1)}(x) 
\end{bmatrix}},
\]
then it means that for each $j=0,\dots, n-1$, 
\[ \varsigma(x) e_{\mathbf{U},j}(x)\\
 =  a_{0j}(x)e_{\mathbf{U},0}(x) +a_{1j}(x)e_{\mathbf{U},1}(x)  + \cdots + a_{(n-1)j}(x)e_{\mathbf{U},n-1}(x).   
\]
When $x$ is fixed and clear in context, by abusing notation again, 
we write simply $\widetilde{\varsigma}_{\mathbf{U}}(x) = (H_{\mathbf{U}}^{(n)})^{-1}(\varsigma(x))$. 

Now let $x\in \mathbf{U}\cap \mathbf{V}$ for $\mathbf{U}=(U_0, \dots, U_{n-1}),\, \mathbf{V}=(V_0, \dots, V_{n-1})\in \U^{(n)}$. 
Then for $0\leq l\leq n$, one can show that 
\[v^{(l)}_{\mathbf{U}}(x)=g^{(l)}_{\mathbf{V},\mathbf{U}}(x)\cdot v^{(l)}_{\mathbf{V}}(x),\] 
where 
$g^{(l)}_{\mathbf{V},\mathbf{U}}(x):=\Pi_{j=0}^{l-1}\, g_{V_j,U_j}(\alpha^j(x)).$
With the following diagonal unitary matrix
\begin{equation}\label{def:u-UV}
    u_{\mathbf{V}\mathbf{U}}(x):=diag(1, g_{\mathbf{V},\mathbf{U}}^{(1)}(x),g_{\mathbf{V},\mathbf{U}}^{(2)}(x),\ldots,g_{\mathbf{V},\mathbf{U}}^{(n-1)}(x)),
\end{equation}
    it is easily checked that 
\begin{equation}\label{varsigma-U-V}
 \widetilde{\varsigma}_{\mathbf{V}}(x) = u_{\mathbf{V}\mathbf{U}}(x) \widetilde{\varsigma}_\mathbf{U}(x) u_{\mathbf{V}\mathbf{U}}^{-1}(x).   
\end{equation} 
In other words, 
\[H_{\mathbf{U}}^{(n)}(x, \widetilde{\varsigma}_{\mathbf{U}}(x))=H_{\mathbf{V}}^{(n)}(x, \mathrm{Ad}(u_{\mathbf{V}\mathbf{U}}(x))\widetilde{\varsigma}_{\mathbf{V}}(x)),\]
where $\mathrm{Ad}(u)$ is an automorphism such that $\mathrm{Ad}(u)M=uMu^*$ for $M\in M_n(\mathbb C)$, and   $x\mapsto \mathrm{Ad}(u_{\mathbf{V}\mathbf{U}}(x)):\cap_{j=0}^{n-1}\alpha^{-j}(U_j\cap V_j)\to \Aut(M_n(\mathbb C))$ are the transition functions for the atlas 
 $\{\big(\cap_{j=0}^{n-1}\alpha^{-j}(U_j),H_{\mathbf{U}}^{(n)}\big)\}_{\mathbf{U}\in \mathcal U^{(n)}}$ of $\mathscr M^{(n)}$.

If  $\widetilde{\varsigma}_\mathbf{U}(x)$ is written as above,  
then  with  $(x)$ omitted,
\begin{align*} 
&\ \widetilde{\varsigma}_\mathbf{V}
=\ u_{\mathbf{V}\mathbf{U}}
\widetilde{\varsigma}_\mathbf{U}
{  u_{\mathbf{V}\mathbf{U}}}^{-1}\\
=&\ \scriptstyle{
\begin{bmatrix}
a_{00} & a_{01}{g_{\mathbf{V},\mathbf{U}}^{(1)}}^{-1} & \cdots & a_{0(n-1)}{g_{\mathbf{V},\mathbf{U}}^{(n-1)}}^{-1} \\
g_{\mathbf{V},\mathbf{U}}^{(1)}a_{10} & a_{11} &  \cdots & g_{\mathbf{V},\mathbf{U}}^{(1)}a_{1(n-1)}{g_{\mathbf{V},\mathbf{U}}^{(n-1)}}^{-1} \\
\vdots  & \vdots  & \ddots  & \vdots   \\
g_{\mathbf{V},\mathbf{U}}^{(n-1)}a_{(n-1)0} &  {g_{\mathbf{V},\mathbf{U}}^{(n-1)}}a_{(n-1)1}{g_{\mathbf{V},\mathbf{U}}^{(1)}}^{-1} &  \cdots & a_{(n-1)(n-1)}
\end{bmatrix}}.
\end{align*}
From this we observe that the matrices $\widetilde{\varsigma}_\mathbf{U}(x)$ and $\widetilde{\varsigma}_\mathbf{V}(x)$ have the same diagonal elements, that is, the diagonal is invariant under the coordinate change from $\{e_{\mathbf{U},i}(x)\}$ to $\{e_{\mathbf{V},i}(x)\}$ for $x\in \mathbf{U}\cap \mathbf{V}$. Also if one of the matrices is $m$\textsuperscript{th} subdiagonal, then so is the other for $1\leq m\leq n-1$. 

If $\mathcal A'=\{(U', k_{U'})\}$ is another atlas of $\mathscr V$, then considering an atlas $\{(U\cap U', h_{U\cap U'})\}\cup \{(U\cap U', k_{U\cap U'})\}$ we can similarly show as above that for $x\in \mathbf{U}\cap \mathbf{U}'$, $\mathbf{U}\in \U^{(n)}$ and $\mathbf{U}'\in {\mathcal U'}^{(n)}$, the matrix representation $\widetilde{\varsigma}_\mathbf{U}(x)$ is $m$\textsuperscript{th} subdiagonal if and only if 
so is $\widetilde{\varsigma}_{\mathbf{U}'}(x)$ for $m=0, \dots, n-1.$

\begin{definition} \label{def:induced triv}
    We call the atlas $\{\big(\cap_{j=0}^{n-1}\alpha^{-j}(U_j),H_\mathbf{U}^{(n)}\big)\}_{U\in \mathcal U^{(n)}}$ the {\it local trivialization of $\mathscr M^{(n)}$ induced  from  the atlas}  $\{(U_j,h_{U_j})\}_{U_j\in \mathcal U}$ of $\mathscr V$. By a {\it matrix representation} of a linear map $\varsigma(x)$ in  $\mathscr{M}^{(n)}_x$, $\varsigma\in \Gamma(\mathscr M^{(n)})$, we always mean a matrix 
\[\widetilde{\varsigma}_\mathbf{U}(x)=(H_\mathbf{U}^{(n)})^{-1}(\varsigma(x))\]for some $\mathbf{U}=(U_0,\dots,U_{n-1})\in \mathcal U^{(n)}$ with $x\in \mathbf{U}$ if not mentioned otherwise.
\end{definition}

Now we return to the orbit-breaking algebras $\mathcal O(\E_Y)$ for $Y\subset X$ a closed subset with non-empty interior.  Let $r_1<\cdots<r_K$ be the first return times to $Y$ as before. 
Then the vector bundle
\[
\mathscr{D}^{(r_k)}=\mathscr{V}^{(0)} \oplus \mathscr{V}^{(1)} \oplus \mathscr{V}^{(2)} \oplus \cdots \oplus \mathscr{V}^{(r_k-1)} 
\]
over $X$ is of rank $r_k$ and its endomorphism bundle  $\mathscr{M}_{r_k}=End(\mathscr{D}^{(r_k)})$ is a locally trivial $M_{r_k}$-bundle for every $k=1, \dots, K$. 
We write  
\[\mathscr M_k:=\mathscr M_{r_k}|_{\overline{Y_k}}\] 
and  
\[H_\mathbf{U}^{(k)}:=H_\mathbf{U}^{(r_k)},\ \mathbf{U}\in \U^{(r_K)},\] 
for the rest of the paper as in \cite{FoJeSt:rsh}.
 
 \begin{lemma}\label{pi-k}{\rm (\cite[Lemma 7.3, Proposition 7.5]{FoJeSt:rsh})}
 For each $k=1,\dots, K$, there exists a homomorphism $\pi_k : \mathcal{O}(\E_Y) \to \Gamma(\mathscr M_k)$ such that for $f \in C(X)$, $\xi \in \E_Y$, and $(a_0, a_1, \dots, a_{r_k-1})\in \mathscr D^{(r_k)}_x$, 
 \begin{align*}  &\,\pi_k(f)(x)(a_0,a_1, \dots, a_{r_k-1})\\
  &=\, (f(x)a_0, f(\alpha(x))a_1, \dots,  f(\alpha^{r_k-1}(x)) a_{r_k-1}).
\end{align*} 
 and   
 \begin{align*}
  &\, \pi_k(\xi)(x)(a_0,a_1, \dots, a_{r_k-1})\\
  &=\, (0, \xi(x)\otimes a_0,  \xi(\alpha(x))\otimes a_1, \dots, \xi(\alpha^{r_k-2}(x))\otimes a_{r_k-2}). 
  \end{align*}
Moreover, $\pi:=\oplus_{k=1}^K \pi_k : \mathcal{O}(\E_Y) \to \oplus_{k=1}^K \Gamma(\mathscr{M}_k)$ is an injective homomorphism.
\end{lemma}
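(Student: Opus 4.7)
The strategy is to construct a covariant representation $(\rho_k, \tau_k)$ of the Hilbert bimodule $\E_Y$ into $\Gamma(\mathscr M_k)$ realising the stated formulae and to invoke the universal property of the Cuntz--Pimsner algebra to obtain $\pi_k$. For $f \in C(X)$ and $x \in \overline{Y_k}$, let $\rho_k(f)(x)$ be the diagonal endomorphism of $\mathscr D^{(r_k)}_x$ scaling the summand $\mathscr V^{(i)}_x$ by $f(\alpha^i(x))$. For $\xi \in \E_Y$, let $\tau_k(\xi)(x)$ be the first-subdiagonal endomorphism sending $a_{i-1} \in \mathscr V^{(i-1)}_x$ to $\xi(\alpha^{i-1}(x))\otimes a_{i-1} \in \mathscr V^{(i)}_x$ under the canonical identification $\mathscr V_{\alpha^{i-1}(x)}\otimes \mathscr V^{(i-1)}_x = \mathscr V^{(i)}_x$, and annihilating $\mathscr V^{(r_k-1)}_x$. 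Continuity of these sections is read off from the induced atlas of Definition~\ref{def:induced triv}: in a chart $\mathbf U \in \U^{(r_k)}$ the matrix of $\rho_k(f)$ is $\mathrm{diag}(f(x), f(\alpha(x)), \ldots, f(\alpha^{r_k-1}(x)))$, and that of $\tau_k(\xi)$ is first-subdiagonal with $(i, i{-}1)$-entry the continuous scalar coefficient of $\xi(\alpha^{i-1}(x))$ in the chart map $h_{U_{i-1}}$.

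The covariance $\rho_k(f)\tau_k(\xi) = \tau_k(f\xi)$ follows fibrewise from the left-action formula $(f\xi)(y) = f(\alpha(y))\xi(y)$. For $\tau_k(\xi)^*\tau_k(\eta) = \rho_k(\langle \xi,\eta\rangle_\E)$ one computes directly that $\tau_k(\xi)^*\tau_k(\eta)(x)$ is diagonal with $j$-th entry $\langle \xi(\alpha^j(x)), \eta(\alpha^j(x))\rangle_{\mathscr V_{\alpha^j(x)}} = \langle \xi,\eta\rangle_\E(\alpha^j(x))$ for $j = 0, \ldots, r_k-2$, and zero on the top slot. The identity then requires matching zero on the top slot of the right-hand side, i.e.\ $\langle \xi,\eta\rangle_\E(\alpha^{r_k-1}(x)) = 0$. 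This holds because $\E_Y = C_0(X\setminus Y)\E$ forces $\xi, \eta$ to vanish on $\alpha^{-1}(Y)$, and, since $Y$ is closed, $\alpha^{r_k}(x) \in Y$ for all $x \in \overline{Y_k}$ (by continuity from $Y_k$), whence $\alpha^{r_k-1}(x) \in \alpha^{-1}(Y)$. The pair $(\rho_k, \tau_k)$ is thus a covariant representation and is readily seen to satisfy the Cuntz--Pimsner (not merely Toeplitz) relation fibrewise, so universality produces $\pi_k$.

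The main obstacle is the injectivity of $\pi = \bigoplus_{k=1}^K \pi_k$, for which I invoke the gauge-invariant uniqueness theorem \cite{Katsura2004}. Fibrewise conjugation by $\mathrm{diag}(1, z, z^2, \ldots, z^{r_k-1})$ defines a $\mathbb T$-action on $\Gamma(\mathscr M_k)$ scaling the $m$-th subdiagonal by $z^m$; this is well defined on sections since the transition matrices $u_{\mathbf V\mathbf U}(x)$ of \eqref{def:u-UV} are themselves diagonal. Because $\rho_k$ takes values in diagonal sections and $\tau_k$ in first-subdiagonal sections, $\pi$ intertwines this action (summed over $k$) with the canonical gauge action on $\mathcal O(\E_Y)$. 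Faithfulness of $\pi$ on $C(X)$ is immediate from the partition $X = \bigsqcup_{k=1}^K\bigsqcup_{i=0}^{r_k-1} \alpha^i(Y_k)$: any nonzero $f \in C(X)$ is nonzero at some $\alpha^i(y)$ with $y \in Y_k$, giving a nonzero diagonal entry in $\rho_k(f)(y)$. Gauge-invariant uniqueness then upgrades this to injectivity of $\pi$.
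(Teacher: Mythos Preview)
Your argument is sound. The present paper does not itself prove this lemma --- it is quoted from \cite[Lemma 7.3, Proposition 7.5]{FoJeSt:rsh} --- so there is no in-paper proof to compare against. Your route (build the covariant pair $(\rho_k,\tau_k)$ by hand, then invoke Katsura's gauge-invariant uniqueness theorem for injectivity of $\pi$) is the natural one and each ingredient checks out: the diagonal transition matrices \eqref{def:u-UV} do make the fibrewise $\mathrm{diag}(1,z,\ldots,z^{r_k-1})$-conjugation into a globally defined gauge action on $\Gamma(\mathscr M_k)$, and faithfulness of $\oplus_k\rho_k$ on $C(X)$ follows immediately from the partition \eqref{X and Y_k}.

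One point you pass over a bit quickly is the Cuntz--Pimsner (as opposed to Toeplitz) covariance. For a Hilbert bimodule this is the left-inner-product relation $\tau_k(\xi)\tau_k(\eta)^* = \rho_k\bigl({}_{C(X)}\langle\xi,\eta\rangle\bigr)$, and the only entry requiring care is the $0$-th diagonal slot: one needs ${}_{C(X)}\langle\xi,\eta\rangle(x)=\langle\eta,\xi\rangle_\E(\alpha^{-1}(x))=0$ for $x\in\overline{Y_k}$. This holds because $\overline{Y_k}\subseteq Y$, so $\alpha^{-1}(x)\in\alpha^{-1}(Y)$, and $\xi\in\E_Y$ vanishes there --- exactly the mechanism you already used for the right-inner-product identity at the top slot. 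Once this is made explicit your proof is complete.
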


Suppose $x\in Y_k\cap \mathbf{U}$ for some $k$, $1\leq k\leq K$ and  $\mathbf{U}\in \U^{(r_K)}$, and consider the basis  $\{e_{\mathbf{U},0}(x),\dots, e_{\mathbf{U},r_k-1}(x)\}$ of $\mathscr D^{(r_k)}_x$.  
If $\xi\in \E_Y$, then for each $j=0,\dots,r_k-2$, 
since $\xi(\alpha^j(x))\in \mathscr V_{\alpha^j(x)}=\mathbb C\cdot h_{U_j}(\alpha^j(x),1)$, there is a  scalar $\widetilde\xi(\alpha^j(x))\in \mathbb C$ such that
\begin{equation}\label{matrix entries}
\xi(\alpha^j(x))=\widetilde\xi(\alpha^j(x))h_{U_j}(\alpha^j(x), 1).
\end{equation}
Then 
\begin{equation}\label{tau_k shift}
\pi_k(\xi)(x)e_{\mathbf{U},j}(x)=\widetilde\xi(\alpha^j(x))e_{\mathbf{U},j+1}(x)
\end{equation}
holds by Lemma~\ref{pi-k}, and for the unitary $\Lambda_x:\mathbb C^{r_k}\to \mathscr D^{(r_k)}_x$, 
\[\Lambda_x(\lambda_0, \dots, \lambda_{r_k-1})= \lambda_0 e_{\mathbf{U},0}(x)+\cdots +\lambda_{r_k-1} e_{\mathbf{U},r_k-1}(x), \] 
we have the following matrix representation 
\begin{equation}\label{tau_k matrix}
M_{\pi_k(\xi)}(x)=
\begin{bmatrix}
0 & 0 & \cdots & 0 & 0 & 0 \\
\widetilde{\xi}(x)  & 0 & \cdots & 0 & 0 & 0 \\
0 & \widetilde{\xi}(\alpha(x))  & \ddots & 0 & 0 & 0 \\
\vdots & \vdots & \ddots & \vdots & \vdots & \vdots \\
0 & 0 & \cdots & \widetilde{\xi}(\alpha^{r_k-3}(x))  & 0 & 0 \\
0 & 0 & \cdots  & 0 & \widetilde{\xi}(\alpha^{r_k-2}(x))  & 0
\end{bmatrix}
\end{equation} 
of $\Lambda_x^*\pi_k(\xi)(x)\Lambda_x\in M_{r_k}$, so 
\[\pi_k(\xi)(x)=\Lambda_x M_{\pi_k(\xi)}(x) \Lambda_x^*\ \ \text{ for } \xi\in \E_Y,\, x\in Y_k\cap \mathbf{U}. \] 
For $f\in C(X)$, the corresponding matrix representation is obviously 
\begin{equation}\label{pi_k matrix}
M_{\pi_k(f)}(x)=\diag(f(x), f(\alpha(x)), \dots, f(\alpha^{r_k-1}(x))).
\end{equation}
This matrix representation will be used in later discussion together with the fact that the matrix depends on the choice of  $\mathbf{U}\in \U^{(r_K)}$ with $x\in \mathbf{U}$, but the matrix entries differ only by multiplication by scalars of modulus one. 

\begin{remark}\label{remark:M_{r_k}}
Let $\{\gamma_U\}_{U\in \U}$ be a partition of unity subordinate to an open cover $\U$ of $X$. Define $\xi_U\in \E=\Gamma(\mathscr V,\alpha)$, $U\in \U$, by 
\[\xi_U(x)=h_U(x, \gamma_U(x)), \] 
and choose a continuous function $\Theta:X\to [0,1]$   such that $\Theta(x)=0$ if and only if $x\in Y$. Then  $\Theta\xi_U\in \E_Y$. 
If $x\in Y_k$ and $\mathbf{V}\in \U^{(r_K)}$ with $x\in \mathbf{V}$, then for $0\leq j\leq r_k-2$, by Lemma~\ref{pi-k}
\begin{align*}
    &\ \pi_k(\Theta\xi_U)(x)e_{\mathbf{V},j}(x)\\ 
    =&\ (0, \dots, \Theta(\alpha^{j+1}(x))\xi_U(\alpha^j(x))\otimes v_\mathbf{V}^{(j)}(x),\dots, 0)\\
    =&\ \Theta(\alpha^{j+1}(x))(0, \dots, h_U(\alpha^j(x),\gamma_U(\alpha^j(x))\otimes v_\mathbf{V}^{(j)}(x),\dots,0)\\
    =&\ \Theta(\alpha^{j+1}(x))\gamma_U(\alpha^j(x))(0, \dots, h_U(\alpha^j(x),1)\otimes v_\mathbf{V}^{(j)}(x),\dots,0)\\
    =&\ \Theta(\alpha^{j+1}(x))\gamma_U(\alpha^j(x))(0, \dots, h_{V_j}(\alpha^j(x), g_{V_j, U}(x))\otimes v_\mathbf{V}^{(j)}(x),\dots,0)\\
    =&\ \Theta(\alpha^{j+1}(x))\gamma_U(\alpha^j(x))g_{V_j, U}(x)(0, \dots,h_{V_j}(\alpha^j(x), 1)\otimes v_\mathbf{V}^{(j)}(x),\dots,0)\\
    =&\ \Theta(\alpha^{j+1}(x))\gamma_U(\alpha^j(x))g_{V_j, U}(x)(0, \dots,v_\mathbf{V}^{(j+1)}(x),\dots,0)\\
    =&\ \Theta(\alpha^{j+1}(x))\gamma_U(\alpha^j(x))g_{V_j, U}(x) e_{\mathbf{V},j+1}(x).
\end{align*} 
Thus the entries on the first lower subdiagonal of the matrix $M_{\pi_k(\Theta\xi_U)}(x)$, with respect to the basis $\{ e_{\mathbf V,j}\}_{j=0}^{r_k-1}$, is 
\begin{equation}\label{Theta xi_U}
\widetilde{\Theta\xi_U}(\alpha^j(x))=\Theta(\alpha^{j+1}(x))\gamma_U(\alpha^j(x))g_{V_j, U}(x).    
\end{equation} 
Note that $\widetilde{\Theta\xi_U}(\alpha^j(x))\neq 0$ exactly when $\gamma_U(\alpha^j(x))\neq 0$. 
Choose $U_j\in \U$, $0\leq j\leq r_k-1$, for which $\gamma_{U_j}(\alpha^j(x))\neq 0$. 
Considering the arguments of finitely many $g_{V_j,U_j}(x)$, one can find $c_j\in \mathbb T$ such that the first lower subdiagonal matrix \[\diag_{-1}(\lambda_0, \dots, \lambda_{r_k-2}):=\sum_{J=0}^{r_k-1} c_j M_{\pi_k(\Theta \xi_{U_j})}(x)\] 
has nonzero entries $\lambda_j$, $0\leq j\leq r_k-2$. 
It is then easy to see that $\{M_{\pi_k(\Theta\xi_U)}(x)\mid U\in \U\}$ generates the whole matrix algebra $M_{r_k}$ as $^*$-algebra. 
We will  use this fact later in the proof of Theorem~\ref{thm:FindingS}.   
\end{remark} 

The notion of boundary decomposition property of $\varsigma\in \oplus_{k=1}^K \Gamma(\mathscr M_k)$ plays an important role in determining the range of the injective homomorphism $\pi:\mathcal O(\E_Y)\to \oplus_{k=1}^K \Gamma(\mathscr M_k)$. 
Before we introduce the notion in Definition~\ref{bd property},  some notations are needed to set up first.  
Let $x\in \overline{Y_k}\setminus Y_k$, $k=1,\dots,K$, with 
\[
x \in  \overline{Y_k} \cap Y_{t_1} \cap \alpha^{-{r_{t_1}}}(Y_{t_2}) \cap \cdots
\cap \alpha^{-(r_{t_1} + r_{t_2} + \cdots + r_{t_{m(x,k)-1}})}(Y_{t_{m(x,k)}})
\] 
and $r_{t_1} + r_{t_2} + \cdots + r_{t_{m(x,k)}} =   r_{ k}$. 
Then we set 
\[
R_{x,k,0}:=0,\quad
R_{x,k,s}:=r_{t_1} + r_{t_2} + \cdots + r_{t_s}
\]
and
\[\mathscr{D}_x^{(R_{x,k,s-1}, r_{t_s})}:=\mathscr V_x^{(R_{x,k,s-1})}\oplus\mathscr V_x^{(R_{x,k,s-1}+1)}\oplus\cdots\oplus \mathscr V_x^{(R_{x,k,s-1}+r_{t_s}-1)}\] 
which is called the $s$\textsuperscript{th} component of 
\[\mathscr D^{(r_k)}_x=\mathscr D^{(r_{t_1})}_x\oplus \mathscr D_x^{(R_{x,k,1},r_{t_2})}\oplus\cdots\oplus \mathscr D_x^{(R_{x,k,m(x,k)-1},r_{t_{m(x,k)}})}.\]

\begin{definition}\label{bd property}
For each $\varsigma=(\varsigma_1,\ldots,\varsigma_K)\in \Gamma(\mathscr{M}_1) \oplus \cdots \oplus \Gamma(\mathscr{M}_K)$,  we say that $\varsigma$ has the \emph{boundary decomposition property} if it satisfies the following: for any $x\in \overline{Y_k}\setminus Y_k$, $k=1,\dots,K$, with 
\[
x \in  \overline{Y_k} \cap Y_{t_1} \cap \alpha^{-{r_{t_1}}}(Y_{t_2}) \cap \cdots
\cap \alpha^{-(r_{t_1} + r_{t_2} + \cdots + r_{t_{m(x,k)-1}})}(Y_{t_{m(x,k)}})
\] 
and $r_{t_1} + r_{t_2} + \cdots + r_{t_{m(x,k)}} =   r_{ k}$, 
the linear map $\varsigma_k(x)\in \mathscr M_k|_x=End(\mathscr D_x^{(r_k)})$ acts on each $s$\textsuperscript{th} component 
\[\mathscr{D}^{(R_{x,k,s-1}, r_{t_s})}_x=\ \mathscr D_{\alpha^{R_{x,k,s-1}}(x)}^{(r_{t_s})}\otimes  \mathscr V_x^{(R_{x,k,s-1})}\]
of $\mathscr{D}^{(r_{k})}_x$ as  
\begin{equation}\label{s-component}
\varsigma_k(x)|_{\mathscr{D}^{(R_{x,k,s-1}, r_{t_s})}_x} = \varsigma_{t_s}(\alpha^{R_{x,k,s-1}}(x)) \otimes id_{\mathscr{V}^{(R_{x,k,s-1})}_x},      
\end{equation}
or equivalently, $\varsigma_k(x)$ has its matrix representation of the following form 
\[
\widetilde{\varsigma}_k(x) = \begin{bmatrix}
\widetilde{\varsigma}_{t_1}(x) & \\
 & \widetilde{\varsigma}_{t_2}( \alpha^{R_{x,k,1}}(x)) & \\
&         & \ddots\\
&         &    & \widetilde{\varsigma}_{t_{ m(x,k)}}(\alpha^{R_{x,k,m(x,k)-1}}(x))   
\end{bmatrix}
\]
with respect to the local trivialization induced from $\{ (U,h_{U}) : U \in \mathcal{U}\}$ of  $\mathscr{V}$.	
\end{definition}

It follows from\cite[Theorem 8.6]{FoJeSt:rsh} that for $\varsigma=(\varsigma_1,\ldots,\varsigma_K)\in \Gamma(\mathscr{M}_1) \oplus \cdots \oplus \Gamma(\mathscr{M}_K)$, $\varsigma\in \pi(\mathcal O(\E_Y))$  if and only if $\varsigma$ satisfies the boundary decomposition property.

Note  that  $B_1 := \Gamma(\mathscr M_1)$  is a recursive subhomogeneous algebra by \cite[Proposition 1.7]{Phillips:recsub}, and $\pi_1:\mathcal O(\E_Y)\to  \Gamma(\mathscr M_1)$ is surjective by ~\cite[Lemma 7.8]{FoJeSt:rsh}. 
Applying the fact that $\pi(\mathcal O(\E_Y))$ is the $\mathrm{C}^*$-subalgebra of $\Gamma(\mathscr M_1)\oplus\cdots\oplus\Gamma(\mathscr M_K)$ consisting of all elements with boundary decomposition property, it is shown in \cite[Theorem 8.7]{FoJeSt:rsh} using induction on $k$ that  $B_k=\oplus_{i=1}^k \pi_i(\mathcal O(\E_Y))$ is a recursive subhomogeneous algebra. More precisely, 
\[\oplus_{i=1}^k \pi_i (\mathcal O(\E_Y))=\Gamma(\mathscr M_1)\oplus_{\Gamma(\overline{Y_2}\setminus Y_2)}\oplus \Gamma(\mathscr M_2)\oplus\cdots\oplus_{\Gamma(\mathscr M_k|_{\overline{Y_k}\setminus Y_k})} \Gamma(\mathscr M_k),\] 
where the boundary decomposition property  is used to obtain a homomorphism $\varphi_{k-1}:B_{k-1}\to \Gamma(\mathscr M_k|_{\overline{Y_k}\setminus Y_k})$. By the right hand side of the above identity, we mean the following recursive subhomogeneous algebra 
\begin{align*}
   [\cdots [\Gamma(\mathscr{M}_1)\oplus_{\Gamma(\mathscr{M}_2|_{\overline{Y}_2 \setminus Y_2})} \Gamma(\mathscr{M}_2)]\oplus \cdots ]\oplus_{\Gamma(\mathscr{M}_k|_{\overline{Y}_k \setminus Y_k})} \Gamma(\mathscr{M}_k).  
\end{align*}
Then the injectivity of $\pi$ proves that the orbit-breaking algebra $\mathcal O(\E_Y)$ is isomorphic to the following recursive subhomogeneous algebra:
\begin{align*}
  \mathcal{O}(\E_Y) \cong & \ \pi(\mathcal{O}(\E_Y))\\
  = &\ \Gamma(\mathscr M_1)\oplus_{\Gamma(\mathscr M_2|_{\overline{Y_2}\setminus Y_2})}\oplus \Gamma(\mathscr M_2)\oplus\cdots\oplus_{\Gamma(\mathscr M_K|_{\overline{Y_K}\setminus Y_K})} \Gamma(\mathscr M_K).  
\end{align*}


\section{Irreducible representations of orbit breaking subalgebras}\label{sec:irrep}

In this section we characterize all irreducible representations of orbit-breaking subalgebras via their recursive subhomogeneous structure. We end this section with a technical lemma which is essential in obtaining the main result of the paper.

Throughout the rest of this section, let $\alpha$ be a minimal homeomorphism over a compact metric space $X$, $\mathscr V=[V,p,X]$ be a line bundle and $Y$ be a closed subset of $X$ with non-empty interior. Let 
\[r_1 < \dots < r_K\]
denote the distinct first return times to $Y$.

Let $\U$ be a finite open covering of $X$ for which  $\mathcal A=\{(U,h_U)\mid U\in \U\}$ forms an atlas of the line bundle $\mathscr V$. 
By $\U^{(r_K)}$, we denote the set of all ordered sequences $\mathbf{U}:=(U_0, \dots, U_{r_K-1})$ of open sets $U_j\in \U$ such that $\cap_{j=0}^{r_K-1}\alpha^{-j}(U_j)\neq \emptyset$ as before.
Then the open sets $\{\cap_{j=0}^{r_K-1}\alpha^{-j}(U_j)\mid \mathbf{U}\in\U^{(r_K)}\}$ form a finite covering of $X$ and for each $\mathbf{U}\in \U^{(r_K)}$, as in Section~\ref{subsection:RSH}, we obtain a chart map
\[H_\mathbf{U}^{(k)}: \cap_{j=0}^{r_K-1}\alpha^{-j}(U_j)\cap \overline{Y_k}\times M_{r_k}\to \mathscr M_k|_{\cap_{j=0}^{r_K-1}\alpha^{-j}(U_j)\cap\overline{Y_k}}\] 
for $1\leq k\leq K$.

\begin{remarks}\label{rep of A_Y}
\begin{enumerate}[leftmargin=*] 
\item If $\mathscr M$ is a locally trivial bundle over a second-countable locally compact Hausdorff space $T$ with fibre $M_n$, then  the spectrum $\widehat{\Gamma}_0(\mathscr M)$ of the $\mathrm{C}^*$-algebra $\Gamma_0(\mathscr M)$ of all continuous sections vanishing at infinity is homeomorphic to the base space $T$ since the fibre $M_n$ is simple (for example, see \cite[Theorem 1.1 and Corollary of Theorem 1.2]{Fell:OpFields}). 
Actually, the homeomorphism is given by 
\[x\mapsto [\nu_x]: T\to \widehat{\Gamma}_0(\mathscr M),\] 
 where $[\nu_x]$ denotes the unitary equivalence class of an irreducible representation $\nu_x: \Gamma(\mathscr M)\to M_n$, 
\[\nu_x(\varsigma):=H(\varsigma(x)), \ \varsigma\in \Gamma_0(\mathscr M)\] for an isomorphism $H:\mathscr M|_x \to M_n$.
 \item In our case, by (1) the spectrum of $\Gamma(\mathscr M_k)$ is homeomorphic to the base space  $\overline{Y_k}$ of $\mathscr M_k$ for each $k=1, \dots, K$, and every irreducible representation of $\Gamma(\mathscr M_k)$ is (unitarily) equivalent to the following $r_k$ dimensional irreducible representation 
\[\varsigma\mapsto (H_\mathbf{U}^{(k)})^{-1}(\varsigma(x)):\Gamma(\mathscr M_k)\to M_{r_k}\] 
for some $x\in \overline{Y_k}$ and $\mathbf{U}\in \U^{(r_K)}$ with $x\in \overline{Y_k}\cap \mathbf{U}$. 
Thus,  the spectrum of  $\Gamma(\mathscr M_1)\oplus\dots\oplus \Gamma(\mathscr M_K)$ is homeomorphic to $\sqcup_{k=1}^ K  \overline{Y_k}$ the disjoint union of the base spaces $\overline{Y_k}$.
Namely, every irreducible representation of $\Gamma(\mathscr M_1)\oplus\dots\oplus \Gamma(\mathscr M_K)$ is equivalent to an irreducible representation 
\[\nu_{x,k,\mathbf{U},\mathcal A}: \Gamma(\mathscr M_1)\oplus\cdots\oplus \Gamma(\mathscr M_K)\to M_{r_k}\]
given by \[\nu_{x,k,\mathbf{U},\mathcal A}(\varsigma_1, \dots, \varsigma_K):=(H_\mathbf{U}^{(k)})^{-1}(\varsigma_k(x))\] 
for some $k$, $1\leq k\leq K$, $x\in \overline{Y_k}$ and $\mathbf{U} \in \U^{(r_K)}$ with $x\in \mathbf{U}$.
\item Note that $\overline{Y_k}\cap\overline{Y_l}\neq \emptyset$ is possible for $k\neq l$. If $x\in \overline{Y_k}\cap \overline{Y_l}$ for some $k,l$ with $1\leq l<k\leq K$, then $x\in (\overline{Y_k}\setminus Y_k)\cap Y_{t_1}$ for some $t_1$ with $1\leq {t_1}\leq l<k\leq K$, hence 
\[\nu_{x,k,\mathbf{U},\mathcal A}(\varsigma)\in M_{r_k}\, \text{ while  }\,\nu_{x,{t_1}\mathbf{U},\mathcal A}(\varsigma)\in M_{r_{{t_1}}},\] 
for all $\varsigma\in \Gamma(\mathscr M_1)\oplus\cdots\oplus \Gamma(\mathscr M_K)$ and so these two irreducible representations 
$\nu_{x,k,\mathbf{U},\mathcal A}$ and $\nu_{x,t_1,\mathbf{U},\mathcal A}$  are not equivalent. 
Moreover, we will see in Proposition~\ref{irr rep of A_Y} and its proof that $\nu_{x,t_1,\mathbf{U},\mathcal A}|_{\pi(\mathcal O(\E_Y))}$ is irreducible while $\nu_{x,k,\mathbf{U},\mathcal A}|_{\pi(\mathcal O(\E_Y))}$ is not.
\end{enumerate}
\end{remarks}

Set  
$B_Y:= \Gamma(\mathscr M_1)\oplus\cdots\oplus \Gamma(\mathscr M_K)$ and 
$A_Y:=\pi(\mathcal O(\E_Y)$). Then
\[A_Y = \Gamma(\mathscr M_1)  \oplus_{\Gamma(\mathscr M_2|_{\overline{Y_2}\setminus Y_2})}\oplus  \Gamma(\mathscr M_2)\oplus\cdots\oplus_{\Gamma(\mathscr M_K|_{\overline{Y_K}\setminus Y_K}) } \Gamma(\mathscr M_K)\] 
is a $\mathrm{C}^*$-subalgebra of $B_Y$ such that every irreducible representation of $A_Y$ is the restriction of an irreducible representation of $B_Y$ up to unitary equivalence as we show in the following proposition.

\begin{proposition}\label{irr rep of A_Y} 
Every irreducible representation $\nu_{x,k,\mathbf{U},\mathcal A}$, $x\in Y_k\cap \mathbf{U}$, $\mathbf{U}\in \U^{(r_K)}$,  of $B_Y$ restricts to an irreducible representation of $A_Y$.

Conversely, if $\rho$ is an irreducible representation of $A_Y$, then $\rho$ is equivalent to $\nu_{x,k,\mathbf{U},\mathcal A}|_{A_Y}$ for some $k$, $1\leq k\leq K$, $x\in Y_k$, and $\mathbf{U}\in \mathcal U^{(r_K)}$ with $x\in Y_k\cap \mathbf{U}$.    
\end{proposition}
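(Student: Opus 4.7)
\medskip

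\noindent\textbf{Proof proposal.} For the first assertion I would show that $\nu_{x,k,\mathbf U,\mathcal A}|_{A_Y}$ is surjective onto $M_{r_k}$; since $M_{r_k}$ is simple this yields irreducibility. The image contains the diagonal matrices $M_{\pi_k(f)}(x) = \diag(f(x), f(\alpha(x)), \dots, f(\alpha^{r_k-1}(x)))$ for $f \in C(X)$, and since $\alpha$ is minimal on an infinite compact space the orbit points $\alpha^{j}(x)$, $0 \leq j \leq r_k-1$, are pairwise distinct, so Urysohn's lemma yields every diagonal matrix. The crucial role of the hypothesis $x \in Y_k$ is that the first return time at $x$ is exactly $r_k$, so $\alpha^{j+1}(x) \notin Y$ for $0 \leq j \leq r_k-2$, and therefore $\Theta(\alpha^{j+1}(x)) \neq 0$ in formula \eqref{Theta xi_U}. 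Remark~\ref{remark:M_{r_k}} then directly provides that $\{M_{\pi_k(\Theta \xi_U)}(x) : U \in \U\}$ generates $M_{r_k}$ as a $^*$-algebra. Since these matrices are images of elements $\pi(\Theta \xi_U) \in A_Y$, the (finite-dimensional) $^*$-subalgebra $\nu_{x,k,\mathbf U,\mathcal A}(A_Y)$ coincides with $M_{r_k}$.

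For the converse, set $J_{x,k} := \ker(\nu_{x,k,\mathbf U,\mathcal A}|_{A_Y})$ for $x \in Y_k$; by Part~1 each $J_{x,k}$ is a maximal ideal of $A_Y$ with simple quotient $M_{r_k}$. Since any irreducible representation whose kernel equals $J_{x,k}$ factors through $M_{r_k}$ and is therefore uniquely determined up to equivalence, it suffices to show that the $J_{x,k}$, $x \in Y_k$, exhaust $\mathrm{Prim}(A_Y)$. I would prove this by induction on $K$ using the recursive subhomogeneous decomposition of Section~\ref{subsection:RSH}. Consider the closed two-sided ideal
\[
I_K := \{\varsigma \in A_Y : \varsigma_i = 0 \text{ for all } i < K\}.
\]
For any $x \in \overline{Y_K} \setminus Y_K$ one has $x \in Y_{t_1}$ with $t_1 < K$, and in fact all indices $t_s$ appearing in the boundary decomposition of $x$ satisfy $t_s < K$ because $r_{t_s} < r_K$. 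Hence Definition~\ref{bd property} forces each block $\varsigma_{t_s}(\alpha^{R_{x,K,s-1}}(x)) = 0$, and so $\varsigma_K(x) = 0$ on $\overline{Y_K} \setminus Y_K$. This identifies $I_K$ with $\Gamma_0(\mathscr M_K|_{Y_K})$, whose primitive ideal space is $Y_K$ and whose irreducible quotients are the $M_{r_K}$-valued point evaluations at $y \in Y_K$. The quotient $A_Y/I_K$ is the analogous RSH algebra on the first $K-1$ pieces.

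Given an irreducible representation $\rho$ of $A_Y$, either $\rho|_{I_K} \neq 0$, in which case $\rho|_{I_K}$ is an irreducible representation of $I_K$ (hence equivalent to evaluation at some $y \in Y_K$) and $\rho$ is its unique (up to equivalence) irreducible extension to $A_Y$, which by direct inspection is $\nu_{y,K,\mathbf U,\mathcal A}|_{A_Y}$; or else $\rho$ factors through $A_Y/I_K$ and the induction hypothesis produces the required $\nu_{x,k,\mathbf U,\mathcal A}|_{A_Y}$ with $k < K$ and $x \in Y_k$. The base case $K = 1$ is immediate from Remarks~\ref{rep of A_Y}(2), since $\overline{Y_1} = Y_1$ is closed. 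The main obstacle is the identification $I_K \cong \Gamma_0(\mathscr M_K|_{Y_K})$ via boundary decomposition and the verification that the unique irreducible extension of evaluation at $y \in Y_K$ to $A_Y$ really is $\nu_{y,K,\mathbf U,\mathcal A}|_{A_Y}$ and not some other irreducible extension; chart-independence (different $\mathbf V, \mathbf U$ at $x$ yielding unitarily equivalent representations via conjugation by $u_{\mathbf V \mathbf U}(x)$ of \eqref{def:u-UV}) is a routine check.
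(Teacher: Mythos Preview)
Your proposal is correct but proceeds along quite different lines from the paper's proof.

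For the first assertion, the paper does not use the explicit generators $\Theta\xi_U$ at all. Instead, it argues abstractly: given any $(\varsigma_1,\dots,\varsigma_K)\in B_Y$ and $x\in Y_k$, one chooses an open neighbourhood $V$ of $x$ disjoint from the closed set $\overline{Y_k}\setminus Y_k$, multiplies $\varsigma_k$ by a bump function $g\in C_0(V)$ with $g(x)=1$, and observes that the resulting $(0,\dots,0,\varsigma_k')$ lies in $B_k=\oplus_{i=1}^k\pi_i(\mathcal O(\E_Y))$ (because $\varsigma_k'$ vanishes on the boundary). Hence some $\pi(a)\in A_Y$ has the same image under $\nu_{x,k,\mathbf U,\mathcal A}$, so $\nu_{x,k,\mathbf U,\mathcal A}(A_Y)=\nu_{x,k,\mathbf U,\mathcal A}(B_Y)=M_{r_k}$. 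Your route via Remark~\ref{remark:M_{r_k}} is equally valid and in fact foreshadows the argument used later in Theorem~\ref{thm:FindingS}; the paper's bump-function argument has the advantage of not requiring any particular choice of $\Theta$ or partition of unity.

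For the converse, the difference is more substantial. The paper makes no use of the ideal $I_K$ or induction on $K$. Instead it invokes the general fact that an irreducible representation $\rho$ of a $\mathrm C^*$-subalgebra $A_Y\subset B_Y$ extends to an irreducible representation $\widetilde\rho$ of $B_Y$ on a larger Hilbert space $\mathcal H_{\widetilde\rho}\supseteq\mathcal H_\rho$ with $\rho=\widetilde\rho|_{\mathcal H_\rho}$. By Remarks~\ref{rep of A_Y}(2), $\widetilde\rho$ is equivalent to some $\nu_{x,k,\mathbf U,\mathcal A}$ with $x\in\overline{Y_k}$. If $x\in Y_k$ then, by the first part, $\widetilde\rho|_{A_Y}$ is already irreducible, forcing $\mathcal H_\rho=\mathcal H_{\widetilde\rho}$. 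If $x\in\overline{Y_k}\setminus Y_k$, the boundary decomposition property makes $\nu_{x,k,\mathbf U,\mathcal A}|_{A_Y}$ block diagonal, and $\rho$ must be one of the blocks, i.e.\ equivalent to $\nu_{x',t_i,\mathbf U',\mathcal A}|_{A_Y}$ for $x'=\alpha^{R_{x,k,i}}(x)\in Y_{t_i}$. Your inductive approach via the ideal $I_K\cong\Gamma_0(\mathscr M_K|_{Y_K})$ and the quotient $A_Y/I_K\cong B_{K-1}$ is structurally cleaner and closer in spirit to Phillips' treatment of RSH algebras; the paper's approach is shorter but relies on the extension theorem for irreducible representations.
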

\begin{proof}
To show that $\nu_{x,k,\mathbf{U},\mathcal A}|_{A_Y}$, $x\in Y_k\cap \mathbf{U}$, is an irreducible representation of $A_Y$, we claim that $\nu_{x,k,\mathbf{U},\mathcal A}(A_Y)= \nu_{x,k,\mathbf{U},\mathcal A}(B_Y)$. 
For this, let $(\varsigma_1, \dots, \varsigma_K)\in B_Y$. Since $x\in Y_k$, there is an open neighborhood $V$ of $x$ such that $V\cap (\overline{Y_k}\setminus Y_k)=\emptyset$ (since $\overline{Y_k}\setminus Y_k$ is closed in $X$). 
Choose a continuous function $g\in C_0(V)\subset C(X)$ with $g(x)=1$. Then $\varsigma_k':=\varsigma_k \cdot g|_{\overline Y_k}\in \Gamma(\mathscr M_k)$ satisfies $\varsigma_k'(x)=\varsigma_k(x)\in H_{\mathbf U}^{(k)}(x,M_{r_k})$. 
Since $\varsigma_k'|_{\overline{Y_k}\setminus Y_k}=0$, we have 
\[(0, \dots, 0, \varsigma_k')\in B_k:=\Gamma(\mathscr M_1)\oplus_{\Gamma(\overline{Y_2}\setminus Y_2)}\Gamma(\mathscr M_2)\oplus\cdots\oplus_{\Gamma(\overline{Y_k}\setminus Y_k)}\Gamma(\mathscr M_k)\] 
as in \cite[Theorem 8.7]{FoJeSt:rsh} and by the same theorem, there is an element $a\in \mathcal O(\E_Y)$ such that 
$(0, \dots, 0, \varsigma_k')=\oplus_{i=1}^k \pi_i(a)$. 
Then $\pi(a)\in A_Y$ satisfies 
\begin{align*}
\nu_{x,k,\mathbf{U},\mathcal A} (\pi(a))=&\ (H_\mathbf{U}^{(k)})^{-1}(\varsigma_k'(x))\\ 
= &\ (H_\mathbf{U}^{(k)})^{-1}(\varsigma_k(x))\\
= &\ \nu_{x,k,\mathbf{U},\mathcal A}(\varsigma_1,\dots,\varsigma_K).  
\end{align*}

 For the converse, let $\rho:A_Y\to B(\mathcal H_\rho)$ be an irreducible representation. Since $A_Y$ is a $\mathrm C^*$-subalgebra of $B_Y$, $\rho$  extends to an irreducible representation $\widetilde\rho : B_Y\to B(\mathcal H_{\widetilde \rho})$ on a Hilbert space $\mathcal H_{\widetilde\rho}$ which contains $\mathcal H_\rho$ as a closed subspace and $\rho(\varsigma)=\widetilde\rho(\varsigma)|_{\mathcal H_\rho}$ for $\varsigma\in A_Y$. 
By Remarks~\ref{rep of A_Y}(2), $\widetilde\rho$ is equivalent to an irreducible representation $\nu_{x,k,\mathbf{U},\mathcal A}$ of $B_Y$   
for some $k$, $\mathbf{U}\in \mathcal U^{(r_K)}$ and 
$x\in \overline{Y_k}\cap \mathbf{U}$ given by 
 \[ \nu_{x,k,\mathbf{U},\mathcal A}(\varsigma_1, \dots, \varsigma_K)=(H_\mathbf{U}^{(k)})^{-1}(\varsigma_k(x))\in M_{r_k},\] 
for $(\varsigma_1, \dots, \varsigma_K)\in B_Y$.
 Thus $\mathcal H_{\widetilde\rho}\cong\mathbb C^{r_k}$. 
 Note that  $x\in \overline{Y_k}$ is either $x\in Y_k$ or $x\in \overline{Y_k}\setminus Y_k$. 

 If $x\in Y_k$, then 
 \[
  \widetilde\rho (A_Y)=M_{r_k}= \widetilde\rho (B_Y)
 \]
 holds as shown in the first part of the proof. Hence $\widetilde\rho|_{A_Y}$ is an irreducible representation of $A_Y$, so there is no nontrivial closed subspace of $\mathcal H_{\widetilde\rho}\cong \mathbb C^{r_k}$ which is invariant under $\widetilde\rho|_{A_Y}$, which proves that $\mathcal H_\rho=\mathcal H_{\widetilde\rho}$, and $\rho$ is equivalent to the restriction  $\nu_{x,k,\mathbf{U},\mathcal A}|_{A_Y}$.

If $x\in \overline{Y_k}\setminus Y_k$, then 
\[x\in \overline{Y_k}\cap Y_{t_1}\cap \alpha^{-r_{t_1}}(Y_{t_2})\cap\cdots\cap \alpha^{-(r_{t_1}+\cdots +r_{t_{m-1}})}(Y_{t_m})\] 
for some $t_i$ with
$r_{t_1}+\cdots + r_{t_m}=r_k$, $m:=m(x,k)$, as before. 
Then the matrix representation of $\nu_{x,k,\mathbf{U},\mathcal A}(\varsigma_1, \dots, \varsigma_K)$ for $(\varsigma_1, \dots, \varsigma_K)\in A_Y$ is of the following block diagonal form 
\[
\widetilde{\varsigma}_k(x) = \begin{bmatrix}
\widetilde{\varsigma}_{t_1}(x) & \\
 & \widetilde{\varsigma}_{t_2}( \alpha^{R_{x,k,1}}(x)) & \\
&         & \ddots\\
&         &    & \widetilde{\varsigma}_{t_{m}}(\alpha^{R_{x,k,m-1}}(x))   
\end{bmatrix} 
\]
due to the boundary decomposition property of the elements in $A_Y$. 
So obviously the restriction $\widetilde\rho|_{A_Y}$, equivalent to $\nu_{x,k,\mathbf{U},\mathcal A}|_{A_Y}$, is not irreducible. From this matrix representation, we also see that the irreducible representation $\rho: A_Y\to B(\mathcal H_\rho)$, $\mathcal H_\rho\leq \mathcal H_{\widetilde \rho}\cong \mathbb C^{r_k}$, which is a subrepresentation of $\widetilde\rho|_{A_Y}$, must be  equivalent to an irreducible representation having the following matrix representation 
\[(\varsigma_1, \dots, \varsigma_K)\mapsto \Bigg[ \widetilde \varsigma_{t_i}(\alpha^{R_{x,k,i}}(x))\Bigg]=(H_{\mathbf{U}'}^{(t_i)})^{-1}(\varsigma_{t_i}(\alpha^{R_{x,k,i}}(x)))\in M_{r_{t_i}}\]
for some $i$, $0\leq i\leq m-1$. In other words, $\rho$ is equivalent to $\nu_{x',t_i, \mathbf{U}',\mathcal A}|_{A_Y}$ for some $x'=\alpha^{R_{x,k,i}}(x)\in Y_{t_i}$ and $\mathbf{U}'\in \U^{(r_K)}$ with $x'\in \mathbf{U}'$, as desired. 
\end{proof}

Since $A_Y=\pi(\mathcal O(\E_Y))$ is a subhomogeneous algebra,  $[\rho]\mapsto \ker(\rho)$ is a homeomorphism of the spectrum $\widehat{A_Y}$ of $A_Y$ onto  the primitive ideal space $\mathrm{Prim}(A_Y)$ of $A_Y$, hence we may identify each class $[\nu_{x,k,\mathbf{U},\mathcal A}|_{A_Y}]$ with its kernel, $\ker(\nu_{x,k,\mathbf{U},\mathcal A}|_{A_Y})$. 

If $\mathcal A'$ is another atlas of $\mathscr V$ and $x\in Y_k\cap \mathbf{U}\cap \mathbf{U}'$ for $\mathbf{U}\in \U^{(r_K)}$ and $\mathbf{U}'\in {\mathcal U'}^{(r_K)}$, then $\nu_{x,k,\mathbf{U},\mathcal A}|_{A_Y}$ is equivalent to $\nu_{x,k,\mathbf{U}',\mathcal A'}|_{A_Y}$, that is, 
$[v_{x,k,\mathbf{U},\mathcal A}|_{A_Y}]=[v_{x,k,\mathbf{U}',\mathcal A'}|_{A_Y}]$. 
Namely, the class depends only on $x$ and $k$ with $x\in Y_k$, which justify the following notation 
\[[\nu_{x,k}]:=[v_{x,k,\mathbf{U},\mathcal A}|_{A_Y}]\] 
even without any exact definition of $\nu_{x,k}$, and we may write the corresponding primitive ideal of $A_Y$ as $\ker(\nu_{x,k})$.

The following corollary is \cite[Lemma 2.1]{Phillips:recsub} in case every $\mathscr M_k$ is a trivial $M_{r_k}$-bundle, so that $\Gamma(\mathscr 
 M_k)\cong C(\overline{Y_k}, M_{r_k})$. 

\begin{corollary}\label{prim A_Y}
  The spectrum $\widehat{A_Y}$ of 
  \[A_Y=\Gamma(\mathscr M_1)\oplus_{\Gamma(\mathscr M_2|_{\overline{Y_2}\setminus Y_2})}\oplus\cdots\oplus_{\Gamma(\mathscr M_K|_{\overline{Y_K}\setminus Y_K})}\Gamma(\mathscr M_K)\]  is homeomorphic to the disjoint union $Y_1\sqcup\cdots\sqcup Y_K$ via the map 
  \[x\mapsto [\nu_{x,k}]: Y_k\to \widehat{A_Y}\] 
  for each $k=1, \dots, K$.
\end{corollary}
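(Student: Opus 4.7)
The plan is to combine Proposition~\ref{irr rep of A_Y} with the recursive subhomogeneous filtration of $A_Y$ established in \cite[Theorem 8.7]{FoJeSt:rsh}, thereby adapting the argument of \cite[Lemma 2.1]{Phillips:recsub} from the trivial-bundle case to the locally trivial setting. First I would check that the map $\Phi: \bigsqcup_{k=1}^K Y_k \to \widehat{A_Y}$, $x \mapsto [\nu_{x,k}]$ for $x \in Y_k$, is well-defined: the change-of-chart formula \eqref{varsigma-U-V}, together with its analogue across different atlases of $\mathscr V$, provides unitary intertwiners showing that the equivalence class $[\nu_{x,k,\mathbf{U},\mathcal A}|_{A_Y}]$ depends only on $x$ and $k$. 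Surjectivity of $\Phi$ is then immediate from Proposition~\ref{irr rep of A_Y}.

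For injectivity, the proof of Proposition~\ref{irr rep of A_Y} shows $\nu_{x,k,\mathbf{U},\mathcal A}(A_Y) = M_{r_k}$ whenever $x \in Y_k$, so $[\nu_{x,k}]$ has dimension $r_k$; since $r_1 < r_2 < \cdots < r_K$, equivalent classes must arise from a common $Y_k$. For distinct $x, x' \in Y_k$, the surjectivity of $\pi_k: \mathcal O(\E_Y) \to \Gamma(\mathscr M_k)$ from \cite[Lemma 7.8]{FoJeSt:rsh}, combined with a Urysohn-type separation in the locally trivial $M_{r_k}$-bundle $\mathscr M_k \to \overline{Y_k}$, produces an element of $A_Y$ lying in exactly one of $\ker(\nu_{x,k})$ and $\ker(\nu_{x',k})$. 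Since $A_Y$ is subhomogeneous, hence type I, equality of kernels is equivalent to unitary equivalence, and injectivity follows.

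For the topological part I would induct on $k$ along the RSH filtration $B_1 \subseteq B_2 \subseteq \cdots \subseteq B_K = A_Y$ with $B_k = \bigoplus_{i=1}^k \pi_i(\mathcal O(\E_Y))$, using the short exact sequence
\[
0 \to \Gamma_0(\mathscr M_k|_{Y_k}) \to B_k \to B_{k-1} \to 0
\]
to exhibit $\widehat{B_k}$ as $\widehat{B_{k-1}}$ glued as a closed subspace to the spectrum of $\Gamma_0(\mathscr M_k|_{Y_k})$ as its open complement. Since $Y_k$ is open in the compact Hausdorff space $\overline{Y_k}$ (and so locally compact Hausdorff), and $\mathscr M_k|_{Y_k}$ is a locally trivial $M_{r_k}$-bundle, Remarks~\ref{rep of A_Y}(1) yields the canonical identification $\widehat{\Gamma_0(\mathscr M_k|_{Y_k})} \cong Y_k$. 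Iterating along the filtration and piecing the identifications together through the chart maps $H_{\mathbf U}^{(k)}$ of Section~\ref{subsection:RSH} gives the claimed homeomorphism. The main technical point will be matching the hull-kernel topology on $\widehat{A_Y}$ with that on $\bigsqcup_k Y_k$ at each inductive step; this is precisely where local triviality of $\mathscr V$ takes the place of genuine triviality in Phillips's original argument, and the compatibility is supplied directly by the chart-map construction of Section~\ref{subsection:RSH}.
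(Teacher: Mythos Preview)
Your approach is sound but takes a different route from the paper. The paper argues directly with sequences: having noted that Proposition~\ref{irr rep of A_Y} gives the bijection, it fixes $x \in Y_k$ and shows that for any sequence $(x_n)$ in $Y_k$ one has $x_n \to x$ if and only if $[\nu_{x_n,k}] \to [\nu_{x,k}]$, by explicitly comparing kernels---continuity of sections handles one direction, and a bump-function construction (supported away from $\overline{Y_k}\setminus Y_k$) handles the other. Your approach instead works structurally, inducting along the RSH filtration and invoking the general fact that for a short exact sequence $0 \to I \to A \to A/I \to 0$ the spectrum decomposes as $\widehat I$ open inside $\widehat A$ with closed complement $\widehat{A/I}$. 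This is closer to Phillips's original argument and cleanly separates the algebraic filtration from the point-set topology; the paper's argument is more elementary and self-contained but constructs separating elements by hand.

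One correction: your appeal to \cite[Lemma 7.8]{FoJeSt:rsh} for surjectivity of $\pi_k$ is only valid for $k=1$; for $k > 1$ the map $\pi_k : \mathcal O(\E_Y) \to \Gamma(\mathscr M_k)$ is not onto, because of the boundary decomposition constraint on $\overline{Y_k}\setminus Y_k$. Your injectivity and Urysohn separation still go through once you observe---either from the first paragraph of the proof of Proposition~\ref{irr rep of A_Y}, or directly from your own short exact sequence---that $\Gamma_0(\mathscr M_k|_{Y_k})$ sits inside $B_k$ as an ideal, which already suffices to separate distinct points of $Y_k$.
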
 

\begin{proof}  
By Proposition~\ref{irr rep of A_Y}, the maps $x\mapsto [\nu_{x,k}]$, $x\in Y_k$, $k=1, \dots, K$, define a bijection of  $\sqcup_k Y_k$ onto the spectrum $\widehat{A}_Y$. We show that this bijection is in fact a homeomorphism.

We fix $x\in Y_k$ and show that for a sequence  $\{x_n\}$ in $Y_k$,  
\[x_n\to x\ \text{ if and only if }[\nu_{x_n,k}]\to [\nu_{x,k}].\] 
For this, first note that $[\nu_{x_n,k}]\to [\nu_{x,k}]$ means that $\ker(\nu_{x,k})$ is a limit point of every subsequence  of $\{\ker(\nu_{x_{n},k})\}_n$, namely 
\[\cap_{i\geq 1}\ker(\nu_{x_{n_i},k}) \subset \ker(\nu_{x,k})\] 
for every subsequence $\{x_{n_i}\}_i$ of $\{x_n\}_n$.

Let $x_n\to x$ and $\{x_{n_i}\}_i$ be a subsequence of $\{x_n\}$. Then for any $\mathbf{U}\in \U^{(r_K)}$ with $x\in \mathbf{U}$, one can choose $i_0$ such that $x_{i}\in \mathbf{U}$ for all $i\geq i_0$. 
If $\varsigma=(\varsigma_1, \dots,\varsigma_K)\in \ker(\nu_{x_{n_i},k})$ for all $i\geq i_0$, then  
\[\nu_{x_{n_i},k,\mathbf{U},\mathscr A}|_{A_Y}(\varsigma)=(H_{\mathbf{U}}^{(k)})^{-1}(\varsigma_k(x_{n_i}))=0 \] for all $i\geq i_0$, and  
the continuity of $\varsigma_k$ at $x$ gives 
$(H_{\mathbf U}^{(k)})^{-1}(\varsigma_k(x))=0$. 
This means $\varsigma\in \ker(\nu_{x,k})$, hence 
$\cap_{i\geq 1}\ker(\nu_{x_{n_i},k})\subset \ker(\nu_{x,k})$ follows from $\cap_{i\geq 1}\ker(\nu_{x_{n_i},k})\subset \cap_{i\geq i_0}\ker(\nu_{x_{n_i},k})$.
Thus we have $[\nu_{x_n,k}]\to [\nu_{x,k}]$.

Now let $[\nu_{x_n,k}]\to [\nu_{x,k}]$. 
Suppose $x_n\nrightarrow x$. Passing to a  subsequence of $\{x_n\}$, we may assume that there is an open neighborhood $U$ of $x$ such that $x_n\notin U$ for all $n$. We may also assume that $U\cap (\cup_{l=1}^{k-1}Y_l)=\emptyset$ since $x$ is not in the closed set $\cup_{l=1}^{k-1}Y_l$. 
Then one can find a $\varsigma_k\in \Gamma(\mathscr M_k)$ such that $\varsigma_k(x)\neq 0$ and $\supp(\varsigma_k)\subset U$ (hence $\varsigma_k(x_n)=0$ for all  $n\geq 1$). 
The condition $U\cap (\cup_{l=1}^{k-1}Y_l)=\emptyset$ implies that $\varsigma_k|_{\overline{Y_k}\setminus Y_k}=0$, and hence 
\[(0,\dots,0,\varsigma_k)\in \Gamma(\mathscr M_1)\oplus_{\Gamma(\mathscr M_2|_{\overline{Y_2}\setminus Y_2})}\oplus\Gamma(\mathscr M_2)\oplus\dots\oplus_{\Gamma(\mathscr M_{k}|_{\overline{Y_k}\setminus Y_k})}\Gamma(\mathscr M_k).\]
For any $a\in \mathcal O(\E_Y)$ with $\oplus_{j=1}^k \pi_j(a)=(0,\dots, 0,\varsigma_k)$,  the element 
$\varsigma:=\pi(a)\in A_Y$ satisfies 
\[\varsigma\notin \ker(\nu_{x,k})\ \text{and } \varsigma\in \ker(\nu_{x_n,k})\ \text{for all } n\geq 1,\] 
which contradicts the assumption $[\nu_{x_n,k}]\to [\nu_{x,k}]$.
\end{proof}

If $\mathscr M$ is a locally trivial $M_n$-bundle over a locally compact Hausdorff space $T$, then by Remarks~\ref{rep of A_Y}(1) the spectrum $\widehat{\Gamma_0}(\mathscr M)$ of $\Gamma_0(\mathscr M)$ (hence the primitive ideal space $\text{Prim}(\Gamma_0(\mathscr M))$)  is  homeomorphic to the base space $T$ of $\mathscr M$. 
Let $S\subset A:=\Gamma_0(\mathscr{M})$ be a $C^*$-subalgebra such that $\nu|_S$ is irreducible for any irreducible representation $\nu$ of $A$. Then $S$ is subhomogeneous and $\widehat S=\{[\nu_x|_S]\mid x\in T\}$, where $\nu_x$ is an irreducible representation of $\Gamma_0(\mathscr M)$ with kernel corresponding to the point $x\in T$. 
 The relation $\sim$ on $T=\widehat A$ defined by 
 \[x\sim y \ \text{ if and only if } \ [\nu_x|_S]=[\nu_y|_S]\] is an equivalence relation and $\widehat S$ is homeomorphic to the quotient space $\widehat A/\sim$ equipped with the quotient topology induced from $\widehat A=T$ (see \cite[Lemma 1.10]{Fell:OpFields}).  
 Note here that $[\nu_x|_S]\in \widehat S$ is in the closure of a subset $\{[\nu_{x_i}|_S]\}_i\subset \widehat S$  exactly when $\ker(\nu_x|_S)\supset \cap_i \ker(\nu_{x_i}|_S)$, namely when $\nu_x(a)=0$ for any $a\in S$ with $\nu_{x_i}(a)=0$ for all $i$.

The following is a slightly more general version of \cite[Lemma 4.3]{EllNiu:MeanDimZero}.

\begin{lemma} \label{lemma:SW}
Let $\mathscr{M} = [M, p, T]$ be a locally trivial $M_n$-bundle over a second-countable locally compact Hausdorff space $T$ and let $A = \Gamma_0(\mathscr{M})$ the $\mathrm{C}^*$-algebra of continuous sections vanishing at infinity. Suppose that $S \subset A$ is a $\mathrm C^*$-subalgebra and there exists a topological space $\Delta$ and a surjective continuous map
\[\sigma : T \to \Delta\] 
such that
\begin{enumerate}
    \item[\rm{(1)}] for any $x_1, x_2$ we have that $\sigma(x_1) = \sigma(x_2)$ if and only if $[\nu_{x_1}|_S]=[\nu_{x_2}|_S]$ ;
    \item[\rm{(2)}] for any sequence $(x_i)_{i \in \mathbb{N}} \subset T$, any $x \in T$ such that $\sigma(x_i) \to \sigma(x)$, we have $a(x)=0$ whenever $a\in S$ satisfies $a(x_i)=0$ for all $i$ ;   
    \item[\rm{(3)}] $\nu_x(S) \cong M_n$ for any $x \in T$.
\end{enumerate}
Then $S$ is $n$-homogeneous with $\prim(S)$  homeomorphic to $\Delta$.
\end{lemma}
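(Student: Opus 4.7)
The plan combines a pure-state extension argument with conditions (1)--(3) in turn: (3) forces each $\nu_x|_S$ to be an irreducible $n$-dimensional representation of $S$; pure-state extension (together with Remarks~\ref{rep of A_Y}(1)) then identifies \emph{all} irreducible representations of $S$ with such restrictions; (1) upgrades $\sigma$ to a bijection of spectra; and (2) provides the continuity needed for this bijection to be a homeomorphism.

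First I would show that every irreducible representation $\rho : S \to B(H_\rho)$ is unitarily equivalent to some $\nu_x|_S$. To do this, pick a unit vector $\xi \in H_\rho$, form the pure state $\omega = \langle \rho(\cdot)\xi, \xi\rangle$, and extend it (using that pure states of a $\mathrm{C}^*$-subalgebra extend to pure states of the ambient algebra) to a pure state $\tilde\omega$ of $A$. Its GNS representation $\tilde\rho$ is irreducible and its restriction to $S$ contains $\rho$ as a subrepresentation. By Remarks~\ref{rep of A_Y}(1), $\tilde\rho \cong \nu_x$ for some $x \in T$, and condition (3) says $\nu_x|_S$ is itself irreducible on $\mathbb C^n$, forcing $\rho \cong \nu_x|_S$. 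This already shows $S$ is $n$-homogeneous.

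Next, condition (1), combined with the fact that for $n$-homogeneous algebras two irreducible representations are equivalent exactly when they have equal kernel, yields that the surjection $T \twoheadrightarrow \prim(S)$, $x \mapsto \ker(\nu_x|_S)$, factors through $\sigma$ as a bijection $\bar\sigma : \Delta \to \prim(S)$. For the topology, closed sets in $\prim(S)$ are hulls $V(I) = \{ P : P \supseteq I\}$ of ideals $I \trianglelefteq S$, and $\bar\sigma^{-1}(V(I)) = \sigma\bigl(\bigcap_{a \in I}\{x : a(x) = 0\}\bigr)$. Condition (2) is precisely the statement that such a set is sequentially closed in $\Delta$, which (given the countability inherited from second countability of $T$) upgrades to closedness and hence continuity of $\bar\sigma$. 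For continuity of $\bar\sigma^{-1}$, I would exploit that $\phi := \bar\sigma \circ \sigma : T \to \prim(S)$ is continuous (since $\phi^{-1}(\{P : a \notin P\}) = \{x : a(x) \neq 0\}$ is manifestly open) and that $\prim(S)$ is locally compact Hausdorff by $n$-homogeneity.

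The main obstacle I anticipate is this final topological step: condition (2) is phrased sequentially and only in one direction, so promoting it to a genuine homeomorphism will require either a first-countability argument on $\Delta$, or use of the local trivializations of $\mathscr{M}$ (which let us patch together local continuous inverses to $\bar\sigma$ from continuous local sections of the bundle) to directly produce $\bar\sigma^{-1}$.
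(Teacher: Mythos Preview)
Your approach is close to the paper's, and the pure-state extension argument for identifying $\widehat S$ with $\{[\nu_x|_S]:x\in T\}$ is equivalent to what the paper invokes in the paragraph just before the lemma. The substantive divergence is in how you handle continuity of $\bar\sigma^{-1}:\prim(S)\to\Delta$. The paper works instead with the map $\tilde\sigma:\widehat S\to\Delta$, $\tilde\sigma([\nu_x|_S])=\sigma(x)$ (your $\bar\sigma^{-1}$), and obtains its continuity in one line from Fell's result \cite[Lemma~1.10]{Fell:OpFields}, quoted in the paragraph preceding the lemma, that $\widehat S$ carries the \emph{quotient} topology induced from $T\cong\widehat A$ via $x\mapsto[\nu_x|_S]$. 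Then $\tilde\sigma$ is continuous simply because $\sigma:T\to\Delta$ is continuous and constant on $\sim$-classes, by the universal property of the quotient topology; condition~(2) is used only for the inverse direction.

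Your proposed route---observing that $\phi=\bar\sigma\circ\sigma:T\to\prim(S)$ is continuous and that $\prim(S)$ is locally compact Hausdorff---does not by itself yield continuity of $\bar\sigma^{-1}$. From $\bar\sigma^{-1}\circ\phi=\sigma$ you could conclude that $\bar\sigma^{-1}$ is continuous \emph{provided} $\phi$ is a quotient map, but that is precisely the content of Fell's lemma, not a consequence of local compactness or $n$-homogeneity alone. So the missing ingredient in your plan is exactly the quotient-topology identification of $\widehat S$ that the paper imports from Fell; once you have it, the argument for this direction collapses to two sentences, and your worry about patching local sections of $\mathscr M$ becomes unnecessary. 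The sequential nature of~(2) does remain for the other direction, and the paper is equally terse there (``the continuity of the inverse of $\tilde\sigma$ follows from~(2)''); in the intended application $\Delta$ is a subspace of Euclidean space, so sequential continuity suffices.
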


\begin{proof}  It follows from (3) that every irreducible representation of $A$ restricts to an irreducible representation of $S$. 
So we can identify $\widehat S$ and $\widehat  A/\sim$ as explained before this lemma. 
Since the quotient topology of $\widehat  S=\widehat A/\sim$  is the finest one for which the quotient map $q:\widehat A\to \widehat A/\sim$, $q([\nu_x])=[\nu_x|_S]$, is continuous, condition (1) and the continuity of $\sigma$ shows that the bijective map, say 
\[\tilde\sigma:\widehat A/\sim\ \to\, \Delta,\ \ {\tilde\sigma}([\nu_x|_S]):=\sigma(x),\] is continuous. 
In fact, if $U\subset \Delta$ is an open subset, then $q^{-1}({\tilde\sigma}^{-1}(U))=\sigma^{-1}(U)$ is open in $\widehat A=T$ which means ${\tilde\sigma}^{-1}(U)$ is open in $\widehat A/\sim$ with respect to the quotient topology. The continuity of the inverse of $\tilde\sigma$ follows from (2), and we conclude that $\prim(S)=\widehat S=\widehat A/\sim$ is homeomorphic to $\Delta$. 
\end{proof}


\section{\texorpdfstring{$\mathcal Z$-stability}{Jiang--Siu stability}}\label{sec:Zstab}

In this section we apply the structural results developed in the previous sections to the setting of RSH $\mathrm C^*$-algebras arising from minimal dynamical systems twisted by line bundles. Orbit capacity estimates provided by the small boundary property allow for uniform control over the building blocks in the RSH model. This allows us to obtain precise bounds on dimension growth and ultimately verify $\mathcal Z$-stability.  The notion of dimension growth required is given by the \emph{dimension ratio}.

\begin{definition} \label{def:dimrat}
Let $A$ be subhomogeneous with dimensions of irreducible representations $d_1 < d_2 < \dots < d_k$. For $1 \leq i \leq k$, let  $\prim_{d_i}(A)$ denote the space of primitive ideals corresponding to irreducible representations of dimension $d_i$, equipped with the relative Jacobson topology.  The \emph{dimension ratio} of $A$ is then defined by
 \[ \dim\textup{Ratio} (A) := \max_{1 \leq i \leq k} \left \{ \frac{\dim(\prim_{d_i}(A))}{d_i} \right \}.\]
\end{definition}

The next theorem, while rather technical, shows that we are able to approximate  orbit-breaking subalgebras by subhomogeneous $\mathrm C^*$-algebras with arbitrarily small dimension growth. This is precisely what is needed to deduce $\mathcal Z$-stability.

\begin{theorem} \label{thm:FindingS} Let $X$ be an infinite compact metric space, $\mathscr{V}$ a line bundle over $X$ and $\alpha : X \to X$ a minimal homeomorphism with the small boundary property. Let $\E = \Gamma(\mathscr{V}, \alpha)$. Fix $y \in X$ and  $W \subset X$ an open neighborhood of $y$. Suppose $\F_1 = \{ f_1, \dots, f_m\} \subset C(X)$, $\F_2 = \{g_{1}, \dots, g_{m} \} \subset C_0(X \setminus W)$,  $\{\xi_1,\dots,\xi_m\mid \xi_i\in \E,\ \|\xi_i\|\leq 1\}$, and $\epsilon > 0$ are given. Then there exists $Y \subset W$  a closed neighborhood of $y$ and a subhomogeneous $C^*$-algebra $S \subset \mathcal{O}(\E_Y)$ such that 
\begin{enumerate}
\item[\rm{(1)}] $\dim\textup{Ratio} (S) < \epsilon$, 
\item[\rm{(2)}] $\dist(f_i, S) < \epsilon$, for every $1 \leq i \leq m$, and
\item[\rm{(3)}] $\dist(g_{i} \xi_i, S) < \epsilon$, for every $1 \leq i \leq m$.
\end{enumerate}
\end{theorem}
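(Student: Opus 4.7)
The plan is to combine the RSH decomposition of Section~\ref{subsection:RSH} with the partition-of-unity estimate of Proposition~\ref{prop:SBP-POU}, in the style of Elliott--Niu, to construct inside $\mathcal O(\E_Y)$ a subalgebra $S$ whose primitive ideal space at each representation dimension $r_k$ is a controlled quotient of $Y_k$. First I would fix a finite open cover $\mathcal U=\{U_1,\dots,U_N\}$ of $X$ on which $\mathscr V$ is trivial (with chart maps $h_U$), chosen fine enough that each $f_i$ and each $\xi_i$ varies by less than $\epsilon/4$ on every $U\in\mathcal U$. Using Proposition~\ref{prop:SBP-POU}, I then pick a partition of unity $\{\phi_U\}_{U\in\mathcal U}$ subordinate to $\mathcal U$ with $\ocap(B)<\delta$, where $B:=\bigcup_{U\in\mathcal U}\phi_U^{-1}(0,1)$ and $\delta>0$ is a small parameter chosen depending on $\epsilon$ and on the combinatorics of $\mathcal U$. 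By minimality, I shrink $W$ to a closed neighborhood $Y\subset W$ of $y$ whose minimum first return time $r_1$ is large enough to absorb the constants in the dimension bound below, while preserving the local triviality hypotheses of Theorem~\ref{centrally large} near $Y$. Let $r_1<\cdots<r_K$ be the resulting first return times and $Y_1,\dots,Y_K$ the corresponding partition of $Y$.

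To construct $S$, let $\Theta\in C(X)$ vanish precisely on $Y$ and set $\xi_U(x):=h_U(x,\phi_U(x))$, so each $\Theta\xi_U\in\E_Y$. I define $S$ to be the $\mathrm C^*$-subalgebra of $\mathcal O(\E_Y)$ generated by $\{\phi_U:U\in\mathcal U\}\subset C(X)$ together with $\{\Theta\xi_U:U\in\mathcal U\}\subset\E_Y$. By the entrywise computation in Remark~\ref{remark:M_{r_k}}, the matrix of $\pi_k(\phi_U)$ at $x\in Y_k$ is diagonal with $j$th entry $\phi_U(\alpha^j(x))$, while the matrix of $\pi_k(\Theta\xi_U)$ is first-subdiagonal with $j$th entry $\Theta(\alpha^{j+1}(x))\phi_U(\alpha^j(x))g_{V_j,U}(x)$. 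The generation statement at the end of Remark~\ref{remark:M_{r_k}} ensures $\nu_{x,k}(S)=M_{r_k}$ whenever the orbit segment $\{\alpha^j(x):0\le j\le r_k-1\}$ avoids $B$. Two base points $x,x'\in Y_k$ give equivalent restricted representations precisely when their orbit-segment data match after the unitary conjugation allowed by \eqref{varsigma-U-V}, defining an equivalence relation $\sim_k$ on $Y_k$. Applying Lemma~\ref{lemma:SW} with $\Delta=Y_k/{\sim_k}$ and $\sigma$ the quotient map shows $\prim_{r_k}(S)\cong Y_k/{\sim_k}$ and that $S$ is subhomogeneous. For conditions (2) and (3), the diameter choice gives $f_i\approx\sum_U f_i(x_U)\phi_U\in S$ within $\epsilon/4$; for $g_i\xi_i$, introducing the auxiliary function $h_i:=g_i/\Theta\in C(X)$ (continuous since $g_i$ vanishes on $W\supset Y$ and $\Theta$ is bounded away from zero on the compact set $X\setminus W$) and writing $g_i\xi_i=h_i\cdot\Theta\xi_i\approx\bigl(\sum_U h_i(x_U)\phi_U\bigr)\bigl(\sum_V\lambda_{i,V}\Theta\xi_V\bigr)\in S$ (where $\xi_i(x_V)=h_V(x_V,\lambda_{i,V})$) gives the required approximation within $\epsilon$.

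The main obstacle is the dimension-ratio bound (1). The quotient $Y_k/{\sim_k}$ consists of finitely many isolated classes (coming from orbit segments that avoid $B$ entirely and lie in a common tuple $\mathbf U\in\mathcal U^{(r_k)}$) together with a residual part parametrized by segments that intersect $B$. The orbit-capacity estimate $\ocap(B)<\delta$ controls these intersections: the proportion of tower positions $j\in\{0,\dots,r_k-1\}$ with $\alpha^j(x)\in B$ is at most $\delta+o(1)$ as $r_k\to\infty$. Translating this orbit-capacity statement into a \emph{topological-dimension} bound on the residual part of $Y_k/{\sim_k}$ is the delicate step, since $B$ itself may be high-dimensional: the strategy is to refine $\mathcal U$ so that each intersection $B\cap U$ decomposes into finitely many slices of controlled dimension, to catalogue combinatorially which tower positions can land in each such slice, and to bound the union of the corresponding slices in $Y_k$ modulo the equivalence relation. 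Carried through carefully, this yields a dimension bound for $\prim_{r_k}(S)$ whose ratio to $r_k$ can be made smaller than $\epsilon$ by first choosing $\delta$ sufficiently small relative to the combinatorial constants of $\mathcal U$, and then $r_1$ sufficiently large relative to $\delta$ and $\epsilon$, establishing (1).
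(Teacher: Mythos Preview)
Your overall architecture matches the paper's: fix a fine trivializing cover $\mathcal U$, invoke Proposition~\ref{prop:SBP-POU} for a partition of unity $\{\phi_U\}$ with small orbit capacity of $B=\bigcup_U\phi_U^{-1}(0,1)$, shrink $Y$ so that $r_1$ is large, and set $S=C^*(\{\phi_U\},\{\Theta\xi_U\})$. The approximation arguments for (2) and (3) are essentially the paper's (though the paper arranges $\Theta\equiv 1$ off a neighborhood $W_0\subset W$ of $Y$, so that $\Theta g_i=g_i$ directly and no division by $\Theta$ is needed).

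The genuine gap is in (1). Your proposed route---bound $\dim(Y_k/{\sim_k})$ by decomposing $B\cap U$ into ``slices of controlled dimension'' after refining $\mathcal U$---cannot work as stated. The set $B$ depends on the partition of unity, which depends on $\mathcal U$; you cannot refine $\mathcal U$ after the fact. More fundamentally, the small boundary property gives no control over the \emph{topological} dimension of $B$: on an infinite-dimensional $X$, the set $B$ will typically be infinite-dimensional, and so will the preimages in $Y_k$ of the non-isolated classes. Orbit capacity controls how \emph{often} an orbit visits $B$, not how large $B$ is.

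The paper bypasses this entirely by writing down an \emph{explicit} continuous map
\[
\sigma_k:\overline{Y_k}\longrightarrow \mathbb R^{|\mathcal U|(2r_k-1)}\oplus \mathbb D^{|\mathcal U|(|\mathcal U|-1)r_k}
\]
whose coordinates are the numbers $\phi_U(\alpha^j(x))$, $\Theta(\alpha^{j+1}(x))\phi_U(\alpha^j(x))$, and $\phi_U(\alpha^j(x))\phi_V(\alpha^j(x))g_{UV}(\alpha^j(x))$, and then checks (via Lemma~\ref{lemma:SW}) that $\sigma_k(x_1)=\sigma_k(x_2)$ if and only if $[\nu_{x_1,k}|_S]=[\nu_{x_2,k}|_S]$, so that $\prim_{r_k}(S)\cong\Delta_k:=\sigma_k(Y_k)$. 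The dimension bound then drops out immediately: the orbit-capacity estimate forces all but at most $O(\epsilon_0 r_k)$ of these coordinates to take values in the finite set $\{0,1\}$ (or to vanish, for the $\mathbb D$-valued ones), so $\Delta_k$ sits inside a finite union of affine pieces of dimension $\leq 6\epsilon_0 r_k<\epsilon r_k$. The point is that the dimension of the \emph{image} is bounded by counting free coordinates, regardless of $\dim X$ or $\dim B$. This concrete embedding is the missing idea in your sketch; the abstract quotient $Y_k/{\sim_k}$ is the right object, but you need $\sigma_k$ (with the transition-function coordinates $\Phi$ included, since the twist enters the equivalence relation through the diagonal unitaries of \eqref{def:u-UV}) to actually compute its dimension.

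A smaller point: you assert $\nu_{x,k}(S)=M_{r_k}$ only when the orbit segment avoids $B$, but Lemma~\ref{lemma:SW} requires this for \emph{every} $x\in Y_k$. Remark~\ref{remark:M_{r_k}} gives this unconditionally: for each $j$ some $\phi_{U_j}(\alpha^j(x))\neq 0$, so the subdiagonal matrices $M_{\pi_k(\Theta\xi_U)}(x)$ already generate $M_{r_k}$.
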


\begin{proof} 
If  $(U,h_U)$ is a chart of $\mathscr V$,  then the section $\xi_i|_U$ induces a continuous function $s_{i,U}\in C(X)$  such that 
\[\xi_i(x)=h_U (x, s_{i,U}(x)) \ \text{for }\, x \in U.\]
Let 
\[
\epsilon_0 < \epsilon/6.
\]
Fix a finite open cover $\mathcal{U}$ of $X$ such that $\mathscr V|_U$, $U\in \U$, is trivial with chart map $h_U:U\times \mathbb C\to \mathscr V|_U$  and the following hold:
\begin{enumerate}[label=(\alph*)]
  \item   $|f_i(x) - f_i(x') | < \epsilon_0$, 
  \item \( |g_i(\alpha(x))s_{i,U}(x)-g_i(\alpha(x'))s_{i,U}(x')| < \epsilon_0 \),
\end{enumerate}
for every $x, x' \in U$, $U \in \mathcal{U}$, and  $1 \leq i \leq m$. For $U,U'\in \U$, $g_{U,U'}:U\cap U'\to \mathbb U(1)$ denotes the transition function satisfying
\[h_U(x,\lambda)=h_V(x, g_{U',U}(x)\lambda),\ \lambda\in \mathbb C,\  x\in U\cap U'.\]
We put $ g_{U,U'}=0$ if $U \cap U' =\emptyset$ for $U,U' \in \U$.

By Proposition~\ref{prop:SBP-POU}, there exists a partition of unity $\{\gamma_U\}_{U \in \mathcal{U}}$ subordinate to $\mathcal{U}$ and a $T \in \mathbb{N}$ such that
    \[ \frac{1}{N} \sum_{j=0}^{N-1} \chi_{\cup_{U \in\mathcal{U}} \gamma_U^{-1}((0,1))}(\alpha^j(x)) < \frac{\epsilon_0}{|\mathcal{U}|(|\U|+1)}, \text{ for every } x \in X, N \geq T. \]

Using minimality of $\alpha$, the orbit of $y$ is dense in $X$, so there is a closed neighborhood $Y \subset W$ of $y$ with  first return times $ 0<r_1 < r_2 < \dots < r_K$ satisfying 
\[ r_1 > \max \{ \frac{|\mathcal{U}|(|\U|+1)}{\epsilon_0}, T\}.\]

 As in the proof of \cite[Theorem 4.5]{EllNiu:MeanDimZero} (which uses \cite[Lemma 4.1]{EllNiu:MeanDimZero}),  since $Y$ has non-empty interior, there is an open subset $W_0 \subset W$ containing $Y$ such that, for any $1 \leq k \leq K$ we have
\[  r_k^{-1} \sum_{j=0}^{r_k-1} \chi_{{W_0}}(\alpha^j(x)) \leq r_1^{-1} < {\frac{\epsilon_0}{|\mathcal{U}|(|\U| + 1)}}, \text{ for every } x \in  Y_k.\]  

Let $\Theta : X \to [0,1]$ be a function satisfying $\Theta(x) = 0$ if and only if $x \in Y$ and $\Theta|_{X \setminus  W_0} = 1$. Note from $W_0 \subset W$  that $\Theta g_i = g_i$ for every $1 \leq i \leq m$. For each $U\in \U$, define $\xi_U\in \E$  by  
\[\xi_U(x):=h_U(x, \gamma_U(x)).\]
Then $\Theta\xi_U\in\E_Y$ for every $U\in \mathcal U$.  
Now we consider the $C^*$-subalgebra 
\[S:=C^*(\{\gamma_U\}_{U\in \mathcal U},\, \{\Theta\xi_U\}_{U\in \mathcal U})\subset \mathcal O(\E_Y).\]
Note that $S$ is the subalgebra of a subhomogeneous $\mathrm C^*$-algebra, hence is subhomogeneous. 

Define $\gamma_\U$, $\Psi :X\to \mathbb R^{|\U|}$ by 
\[\gamma_\U(x):=\bigoplus_{U\in \U} \gamma_U(x) \ \ \text{ and } \ \Psi(x)=\bigoplus_{U \in \mathcal{U}} \Theta(\alpha(x))\gamma_U(x).\] 
(Note that $h_U(x,\Theta(\alpha(x))\gamma_U(x))=\Theta\xi_U(x)$.) 
Let $\mathbb D:=\{z\in \mathbb C\mid |z|\leq 1\}$ and define   $\Phi : X \to \mathbb{D}^{|\U|(|\U|-1)}$ by
\[  \Phi(x) := \bigoplus_{U\neq V \in \mathcal{U}} \gamma_U(x)\gamma_V(x) g_{UV}(x).\] 
For every $1 \leq k \leq K$, let
 \[ \sigma_k : \overline{Y_k} \to \mathbb{R}^{|\mathcal{U}|(2r_k-1)} \oplus \mathbb{D}^{|\U|(|\U|-1) r_k}\] 
 be the map given by
 \begin{align*} 
 \sigma_{k}(x) =& (\gamma_\U(x), \gamma_\U \circ\alpha(x), \dots, \gamma_\U\circ \alpha^{r_k-1}(x), \\ 
  &\ \ \ \ \Psi(x), \Psi \circ\alpha(x), \dots, \Psi\circ \alpha^{r_k-2}(x), \\
 & \ \ \ \ \ \ \ \Phi(x), \Phi\circ\alpha(x), \dots, \Phi\circ\alpha^{r_k-1}(x)). 
 \end{align*} 
 Let $x\in Y_k$ and let 
 \[\sigma_k(x)=(t_1,\dots, t_{|\U|r_k}, \dots, t_{|\mathcal{U}|(2r_k-1)}, s_1, \dots, s_{|\U|(|\U|-1)r_k}).\]
Then among the first $|\U|r_k$ coordinates of $\sigma_k(x)$, at most $\epsilon_0 r_k$ of $t_i$'s are not equal to $0$ and $1$. In fact, since $t_i=\gamma_U(\alpha^j(x))$ for some $U\in \U$ and $0\leq j\leq r_k-1$, 
\begin{align*}
 &\ |\{(U,j)\mid \gamma_U(\alpha^j(x))\in (0,1)\}|  \\
 \leq &\ |\U|\, |\{0\leq j\leq r_k-1\mid \alpha^j(x)\in \cup_{U\in \U} \gamma_U^{-1}(0,1)\}|\\
 \leq &\ |\U|\frac{\epsilon_0}{|\U|(|\U|+1)}r_k\leq \frac{\epsilon}{|\U|+1} r_k\leq \epsilon_0 r_k.
\end{align*} 
The coordinates $t_i$ for $i=|\U|r_k+1,\dots, |\mathcal{U}|(2r_k-1)$, are of the form  $t_i=\Theta(\alpha^{j+1}(x))\gamma_U(\alpha^j(x))$ for some $U\in \U$ and $0\leq j\leq r_k-2$. In this case,  note that $\Theta(\alpha^{j+1}(x))\neq 0$ and $\gamma_U(\alpha^j(x))=1$ is possible only for a single $U$. We thus have 
\begin{align*}
&\  |\{  (U,j)\mid \Theta(\alpha^{j+1}(x))\gamma_U(\alpha^j(x))\in (0,1)\}|\\
\leq &\ |\{(U,j)\mid \gamma_U(\alpha^j(x))\in (0,1)\}|\\ 
& \ \ \ +|\{(U,j)\mid \gamma_U(\alpha^j(x))=1 \text{ and } \Theta(\alpha^{j+1}(x))\in (0,1)\}|\\
< &\ \frac{\epsilon_0}{|\U|+1} r_k +|\U|\,|\{0\leq j\leq r_k-2\mid \alpha^{j+1}(x)\in W_0\}|\\
< &\ \frac{\epsilon_0}{|\U|+1} r_k+|\U|\frac{\epsilon_0}{|\U|(|\U|+1)}r_k=\frac{2\epsilon_0}{|\U|+1} r_k \leq \epsilon_0 r_k.
\end{align*} 

Now we estimate how many coordinates $s_l$'s of $\sigma_k(x)$ can be non-zero. We use the fact that if $\gamma_U(\alpha^j(x))=1$, then $\gamma_V(\alpha^j(x))=0$ for any $V$, $V\neq U$.
\begin{align*}
 &\ |\{(U,V,j)\mid U\neq V,\ \gamma_U(\alpha^j(x))\gamma_V(\alpha^j(x))g_{UV}(\alpha^j(x))\neq 0\}|\\   
 = &\ |\{(U,V,j)\mid U\neq V, \ \gamma_U(\alpha^j(x))\gamma_V(\alpha^j(x))\neq 0\}|\\
 \leq &\ |\{(U,V,j)\mid \ \gamma_U(\alpha^j(x))\in (0,1)\ \text{or } \gamma_V(\alpha^j(x))\in (0,1)\}|\\
 \leq &\ 2|\U|\,|\{(U,j)\mid \gamma_U(\alpha^j(x))\in (0,1)\}|\\
 \leq &\ 2|\U|\, \frac{\epsilon_0}{|\U|+1}r_k < 2 \epsilon_0 r_k.
\end{align*}
Therefore $\sigma_k(Y_k)$ is contained in
the set of points 
\[ (t_1, \dots , t_{2|\U|r_k -1}, s_1, \dots , s_{|\U|(|\U|-1) r_k})\in [0,1]^{(2|\mathcal{U}|)r_k-1} \oplus \mathbb{D}^{|\U|(|\U|-1) r_k}\] 
such that $|\{j\mid t_j \notin \{0,1\}\}| < 2\epsilon_0 r_k $ and $|\{l\mid |s_l|\neq 0\}|< 2\epsilon_0 r_k$,  
which shows that $\Delta_k:=\sigma_k(Y_k)$ has dimension 
\[\dim(\Delta_k)\leq 6\epsilon_0 r_k < \epsilon r_k,\] 
since $\dim(\mathbb{D})=2$.

We claim that
\begin{enumerate}[label=(\alph*), resume]
     \item the irreducible representations of $S$ have dimension $r_1 <r_2< \dots < r_K$, and  
     \item $\prim_{r_k}(S) \cong \Delta_k$.
\end{enumerate}

To see (c), it is enough to show that every irreducible representation of $\pi(\mathcal O(\E_Y))$ restricts to an irreducible representation of $\pi(S)$, where $\pi=\oplus_{k=1}^K\pi_k$ is the isomorphism obtained in \cite[Proposition 7.5]{FoJeSt:rsh}.  
By Corollary~\ref{prim A_Y}, since every irreducible representation of $\pi(\mathcal O(\E_Y))$ with dimension $r_k$ is unitarily equivalent to a representation 
\[\nu_{x,k,\mathbf{V},\mathcal A}:\pi(\mathcal O(\E_Y))\to M_{r_k}, \  
\nu_{x,k,\mathbf{V},\mathcal A}(\varsigma_1,\dots,\varsigma_K)=(H_\mathbf{V}^{(k)})^{-1}(\varsigma_k(x))\] for some $x\in Y_k\cap \mathbf{V}$, $\mathbf{V}\in \U^{(r_K)}$, where $\mathcal A=\{(h_U, U)\mid U\in \U\}$ is the atlas fixed in the begining of the proof, we only need to show that 
\[\nu_{x,k,\mathbf{V},\mathcal A}(\pi(S))=M_{r_k}.\] 
From (\ref{Theta xi_U}), we see that the possibly non-zero entries of the first lower subdiagonal matrix   
\begin{align*}
  M_{\pi_k(\Theta\xi_U)}(x) 
=&\ \begin{bmatrix}
0 & 0 & \cdots &  0 & 0 & 0\\
\lambda_0  & 0 & \cdots &  0 & 0 & 0 \\
0 & \lambda_1 & \ddots & 0  & 0 & 0\\
\vdots & \vdots & \ddots & \ddots & \vdots & \vdots \\
0 & 0 & \cdots   & \lambda_{r_k-3} & 0 & 0\\
0 & 0 & \cdots    & 0 & \lambda_{r_k-2} & 0
\end{bmatrix}  
\end{align*} 
are 
\[\lambda_j=\Theta(\alpha^{j+1}(x)) g_{V_j,U}(\alpha^j(x))\gamma_U(\alpha^j(x)),\] 
for $j=0, \dots, r_k-2$. 
Note that $\lambda_j\neq 0$ if and only if $\gamma_U(\alpha^j(x))\neq 0$. Also for each $j$, there is a $U\in \U$ such that $\gamma_U(\alpha^j(x))\neq 0$. 
As we discussed in Remark~\ref{remark:M_{r_k}}, we see that 
the matrices $M_{\pi_k(\Theta\xi_U)}(x)\in \nu_{x,k,\mathbf{U},\mathcal A}(\pi(S))$, $U\in \U$,  generates the whole matrix algebra $M_{r_k}$.  This proves (c).

In order to show (d), we use Lemma~\ref{lemma:SW}.
For $1 \leq k \leq K$, let 
\[J_k := \cap_{n \leq r_k} \cap_{I\in \prim_n(\pi(S))}I.\] 
Then by Corollary~\ref{prim A_Y} and (c), 
\[J_k=\cap\, \{\ker(\nu_{x,i}|_{\pi(S)})\mid x\in Y_i,\ 1\leq i\leq k\}.\] 
(Recall here that $\nu_{x,i}$ denotes any irreducible representation equivalent to $\nu_{x,i,\mathbf{V},\mathcal A}|_{A_Y}$.) 
Set $S_k := J_{k-1}/J_k$ with $J_0:=\pi(S)\subset \pi(\mathcal O(\E_Y))$. 
Then the subquotient $S_k$ is isomorphic to a $\mathrm{C}^*$-subalgebra of $ \Gamma_0(\mathscr{M}_k|_{Y_k})$. 
Actually, the fact that the elements of  
\[J_{k-1}=\{\pi(s)=(0, \dots, 0,\pi_k(s),\dots,\pi_K(s)\mid s\in S\}\subset  \mathrm{ran}(\pi)\] 
must satisfy the boundary decomposition property implies $\pi_k(s)|_{\overline{Y_k}\setminus Y_k}=\{0\}$, that is $\pi_k(s)\in \Gamma_0(\mathscr M_k|_{Y_k})$, and the kernel of the projection homomorphism $\pi(s)\mapsto \pi_k(s):J_{k-1}\to \Gamma_0(\mathscr M_k|_{Y_k})$ is equal to $J_k$, hence  $S_k=J_{k-1}/J_k$ is isomorphic to the following $\mathrm{C}^*$-subalgebra of $\Gamma_0(\mathscr M_k|_{Y_k})$:
\begin{equation}\label{S_k} 
S_k \cong \{\pi_k(s)\in \Gamma_0(\mathscr M_k|_{Y_k})\mid s\in S \text{ with } \pi_1(s)=\cdots =\pi_{k-1}(s)=0\}.
\end{equation}
So, we freely identify $S_k$ and this subalgebra of $\Gamma_0(\mathscr M_k|_{Y_k})$. 
Observe that the following spaces are all homeomorphic:
\begin{equation}\label{prim(S_k)}
 \prim(S_k)\cong \widehat{S}_k\cong \prim_{r_k}(\pi(S))\cong \{\,[\nu_{x,k}|_{\pi(S)}]\mid x\in Y_k\},   
\end{equation} 
where the second homeomorphism is well known (see \cite[Proposition 3.6.3]{Dix:C*}) and the last homeomorphism is obtained by (c) and its proof for each $k=1, \dots, K$.
With $\sigma_k : Y_k \to \Delta_k$, let us verify the conditions of Lemma~\ref{lemma:SW} with $S_k\subset \Gamma_0(\mathscr M_k)$ in place of $S$. This will show that $\prim_{r_k}(S)\cong \prim(S_k)\cong \Delta_k$, which in turn implies (d)  by \eqref{prim(S_k)}.

We first show the following: for $x_1, x_2\in Y_k$, 
\[\sigma_k(x_1)=\sigma_k(x_2) \Leftrightarrow 
[\nu_{x_1, k}|_{\pi(S)}]=[\nu_{x_2, k}|_{\pi(S)}],\] 
which is the condition (1) of Lemma~\ref{lemma:SW}. 
First suppose $x_1, x_2\in Y_k$ satisfy $\sigma_k(x_1)=\sigma_k(x_2)$. Then by definition of $\sigma_k$, we have  for $U,U'\in \U$, 
\begin{align*}
  \gamma_U(\alpha^j(x_1))&=\gamma_U(\alpha^j(x_2)), \, 0\leq j\leq r_k-1,\\
  \Theta(\alpha^{j+1}(x_1))\gamma_U(\alpha^j(x_1))
&=\Theta(\alpha^{j+1}(x_2))\gamma_U(\alpha^j(x_2)), \, 0\leq j\leq r_k-2,\\
g_{UU'}(\alpha^j(x_1))&=g_{UU'}(\alpha^j(x_2)), \, 0\leq j\leq r_k-1.
\end{align*} 
Choose $\mathbf{V}_{i}=(V_{i,0}, \dots, V_{i,r_K-1})\in \U^{(r_K)}$ such that $x_i\in \mathbf{V}_i$ for $i=1,2$. Then  
\[\alpha^j(x_i)\in V_{i,j}\ \text{ for all } i=1,2, \, j=0, \dots, r_K-1,\]  
and the matrix representation  $M_{\pi_k(\Theta\xi_U)}(x_i)$   of $\pi_k(\Theta\xi_U)(x_i)$ (with respect to the basis $\{e_{\mathbf{V}_i,0}(x_i), \dots, e_{\mathbf{V}_i, r_k-1}(x_i)\}$ of $\mathscr D_{x_i}^{(r_k)}$) is a first subdiagonal matrix 
\[M_{\pi_k(\Theta\xi_U)}(x_i)=\diag_{-1}(\lambda_{U,i,0},\dots, \lambda_{U,i,r_k-2}),\]
with entries  
\[\lambda_{U,i,j}=\Theta(\alpha^{j+1}(x_i))\gamma_U(\alpha^j(x_i)) 
 g_{V_{i,j} U}(\alpha^j(x_i))\]
by (\ref{Theta xi_U}) for every $U\in \U$, $i=1,2$, and $0\leq j\leq r_k-2$. 
Thus, it follows from the first condition on $\gamma_U$ that $\lambda_{U,1,j}=0$ if and only if $\lambda_{U,2,j}=0$. 
Then by the third condition on the transition functions $g_{UU'}$   from  $\sigma_k(x_1)=\sigma_k(x_2)$, we have 
\begin{align*}
    g_{V_{2,j}U}(\alpha^j(x_2)) =&\ g_{V_{2,j}U}(\alpha^j(x_1))\\
    =&\ g_{V_{2,j}U}(\alpha^j(x_1))g_{UV_{1,j}}(\alpha^j(x_1))g_{V_{1,j}U}(\alpha^j(x_1))\\
    =&\ g_{V_{2,j}V_{1,j}}(\alpha^j(x_1))g_{V_{1,j}U}(\alpha^j(x_1))\\
    =&\ g_{V_{2,j}V_{1,j}}(\alpha^j(x_2))g_{V_{1,j}U}(\alpha^j(x_1)).
\end{align*}
Note that $\mu_j:=g_{V_{2,j}V_{1,j}}(\alpha^j(x_2))\neq 0$ for every $j=0, \dots, r_k-2$, hence it defines a diagonal $r_k\times r_k$ unitary matrix 
\[\mathfrak u:=\diag(1, \,\mu_0,\, \mu_0\mu_1, \,\dots, \,\mu_{r_k-3}\mu_{r_k-2}),\]
with which we can compute 
\[\mathfrak u M_{\pi_k(\Theta\xi_U)}(x_1) {\mathfrak u}^*= M_{\pi_k(\Theta\xi_U)}(x_2),\] for every $U\in\U$.
Since 
\begin{align*}
 \mathfrak u M_{\pi_k(\gamma_U)}(x_1) \mathfrak u^*= &\ \diag (\gamma_U(x_1), \dots, \gamma_U(\alpha^{r_k-1}(x_1))\\
 =&\ \diag (\gamma_U(x_2), \dots, \gamma_U(\alpha^{r_k-1}(x_2))\\
 =&\ M_{\pi_k(\gamma_U)}(x_2)
\end{align*} 
is obvious,  it follows that $[\nu_{x_1, k}|_{\pi(S)}]=[\nu_{x_2, k}|_{\pi(S)}]$.
For the converse, suppose $[\nu_{x_1, k}|_{\pi(S)}]=[\nu_{x_2, k}|_{\pi(S)}]$. We have to show that $\sigma_k(x_1)=\sigma_k(x_2).$
Let $\mathbf{V}_{i}\in \U^{(r_K)}$ with $x_i\in \mathbf{V}_{i}$ for $i=1,2$.
Then the  matrix representations of the generators of $S_k\subset \Gamma_0(\mathscr M_k|_{Y_k})$ at $x_i$ (with respect to the basis $\{e_{\mathbf{V}_i,0}(x_i), \dots, e_{\mathbf{V}_i, r_k-1}(x_i)\}$) 
are given as follows:
\begin{align*}
   M_{\pi_k(\gamma_U)}(x_i) 
&=\diag\big(
\gamma_U(x_i), \gamma_U(\alpha(x_i)), \dots , \gamma_U(\alpha^{r_k-1}(x_i))
\big),  \\ 
  M_{\pi_k(\Theta\xi_U)}(x_i) 
&= { \begin{bmatrix}
0 & 0 & \cdots & 0 &  0 &\\
 \lambda_{i,0}  & 0 & \cdots &  0 & 0 &\\ 
0 & \lambda_{i,1} & \ddots & \vdots & \vdots &\\
\vdots & \vdots & \ddots & \vdots & \vdots & \\
0 & 0 & \cdots   & \lambda_{i, r_k-2} & 0 &
\end{bmatrix}},
\end{align*} 
where $\lambda_{i,j}=\Theta(\alpha^{j+1}(x_i))g_{V_{i,j}U}(\alpha^j(x))\gamma_U(\alpha^j(x))$.

Since $[\nu_{x_1, k}|_{\pi(S)}]=[\nu_{x_2, k}|_{\pi(S)}]$, there is a unitary matrix \(\mathfrak u\in M_{r_k}\) such that
\begin{align*}
\mathfrak u M_{\pi_k(\gamma_U)}(x_1) \mathfrak u^* &= M_{\pi_k(\gamma_U)}(x_2) ,\\ 
\mathfrak u M_{\pi_k(\Theta\xi_U)}(x_1) \mathfrak u^*  &= M_{\pi_k(\Theta\xi_U)}(x_2)
\end{align*}
for all $U\in \U$. 
Furthermore, $\{ M_{\pi_k(\Theta\xi_U)}(x_i)\}_{U\in \U}$, $i=1,2$,  generates  the whole matrix algebra $M_{r_k}$  as a $\mathrm{C}^*$-algebra (as discussed in the proof of (c)), this implies that $Ad_{\mathfrak u}:M_{r_k}\to M_{r_k}$ is an automorphism sending  $j$-th  subdiagonal matrices to  $j$-th subdiagonal matrices for each  $j=0,\dots, r_k-1$.
Then a straightforward inductive argument implies that such a unitary $\mathfrak u$ must be a diagonal matrix. So 
\[M_{\pi_k(\gamma_U)}(x_2)  = \mathfrak u M_{\pi_k(\gamma_U)}(x_1) \mathfrak u^* = M_{\pi_k(\gamma_U)}(x_1)\] from which we have
\[
\gamma_U(\alpha^j(x_1))=\gamma_U(\alpha^j(x_2)), \ j=0,\ldots, r_k-1.
\]

Thus  $\gamma_U(\alpha^j(x_1))\neq 0$ if and only if $\gamma_U(\alpha^j(x_2))\neq 0$ for every $U\in \U$, and so we can always take the same  open neighborhood $V_j:=V_{1j}=V_{2j}\in \U$ of $\alpha^j(x_1)$ and $\alpha^j(x_2)$, respectively for $j=0,\dots,r_k-1$. 
Then for the choice of basis
\[
 h_{V_j}(\alpha^{j}(x_i),1)\in \mathscr V_{\alpha^{j}(x_i)}, \ \ 0\leq j\leq r_k-1, \ i=1,2,
\]
the matrix representations 
$M_{\pi_k(\Theta\xi_U)}(x_i)$ for $U\in \U$, $i=1,2$, 
are 
\begin{align*}
&\ M_{\pi_k(\Theta\xi_U)}(x_i)\\
=\ &  \begin{bmatrix}
0 & 0 & \cdots &  \\
\Theta(\alpha(x_i))g_{V_0 U}(x_i)\gamma_U(x_i)  & 0 & \cdots &   \\
0 & \Theta(\alpha^2(x_i))g_{V_1 U}(\alpha(x_i))\gamma_U(\alpha(x_i))& \ddots & \\
\vdots & \vdots & \ddots &  \\
0 & 0 & \cdots   & 
\end{bmatrix}\end{align*} as seen before. 
Since the diagonal unitary matrix $\mathfrak u$ satisfies 
\[\mathfrak u M_{\pi_k(\Theta\xi_U)}(x_1) \mathfrak u^* = M_{\pi_k(\Theta\xi_U)}(x_2),\] 
there are scalars $\mu_j\in \mathbb T$ (equal to some product  of entries of $\mathfrak u$), $j=0,\ldots, r_k-1$,
such that 
\begin{align*}
&\ \begin{bmatrix}
0 & 0 & \cdots &  \\
\mu_0\Theta(\alpha(x_1))g_{V_0 U}(x_1)\gamma_U(x_1)  & 0 & \cdots &   \\
0 & \mu_1\Theta(\alpha^2(x_1))g_{V_1 U}(\alpha(x_1))\gamma_U(\alpha(x_1))& \ddots & \\
\vdots & \vdots & \ddots &  \\
0 & 0 & \cdots   & 
\end{bmatrix}\\
=&\  \begin{bmatrix}
0 & 0 & \cdots &  \\
\Theta(\alpha(x_2))g_{V_0 U}(x_2)\gamma_U(x_2)  & 0 & \cdots &   \\
0 & \Theta(\alpha^2(x_2))g_{V_1 U}(\alpha(x_2))\gamma_U(\alpha(x_2))& \ddots & \\
\vdots & \vdots & \ddots &  \\
0 & 0 & \cdots   & 
\end{bmatrix}.
\end{align*}
Thus we have 
\begin{equation}\label{mu_j}
\mu_j \,\Theta(\alpha^{j+1}(x_1))g_{V_j  U}(\alpha^j(x_1))  = \Theta(\alpha^{j+1}(x_2)) g_{V_j U}(\alpha^j(x_2))
\end{equation} 
for any $U\in \U$.
Choosing $U=V_j$ (so $g_{V_j, V_j}(\alpha^j(x_i))=1$), we have 
\[\mu_j=\Theta(\alpha^{j+1}(x_2))/\Theta(\alpha^{j+1}(x_1)), \] 
which proves that the number $\mu_j\in \mathbb T$ has to be one and thus  we have 
\[  \Theta(\alpha^{j+1}(x_1))=\Theta(\alpha^{j+1}(x_2)), \ j=0, \dots, r_k-2.\]
This also implies, together with, \eqref{mu_j}  that 
\[
g_{V_j  U}(\alpha^j(x_1))=g_{V_j U}(\alpha^j(x_2)).
\]
Since $V_j\in \U$ could be any neighborhood of $\alpha^j(x_1)$ (and hence $\alpha^j(x_2)$),  we have
\begin{align*}
&\ \gamma_U(\alpha^j(x_1))\gamma_V(\alpha^j(x_1))g_{U,U'}(\alpha^j(x_1))\\
=&\ \gamma_U(\alpha^j(x_2))\gamma_V(\alpha^j(x_2))g_{U,U'}(\alpha^j(x_2)),\ U,U'  \in \U.    
\end{align*}
Therefore  $[\nu_{x_1,k}|_{\pi(S)}]=[\nu_{x_2,k}|_{\pi(S)}]$ implies that $\sigma_k(x_1)=\sigma_k(x_2)$ for every $k=1,\dots, K$, which completes the proof for condition (1) of Lemma~\ref{lemma:SW}.

To prove the condition (2) of Lemma~\ref{lemma:SW}, 
recall that using \eqref{S_k} we identify $S_k$ as a subalgebra of $\Gamma_0(\mathscr M_k|_{Y_k})$, and by \eqref{prim(S_k)} every irreducible representation of $S_k$  can be viewed as an irreducible representation of $\pi(S)$  equivalent to a $r_k$-dimensional one $\nu_{x,k,\mathbf{V},\mathcal A}|_{\pi(S)}$ for some $x\in Y_k$ and $\mathbf{V}\in \U^{(r_K)}$ with $x\in \mathbf{V}$.  
 Without explicitly mentioning $\mathbf{V}$ and $\mathcal A$, and by abuse of notation, we write such an irreducible representation of $S_k$ as
\[\nu_{x,k}|_{S_k}(\pi_k(s)):= \nu_{x,k,\mathbf{V},\mathcal A}(\pi(s))\in M_{r_k}, \] 
for $\pi_k(s)\in S_k$ ($s\in S$ is an element such that $\pi(s)\in J_{k-1}$).

 Let $x_i, x\in Y_k$ and $\sigma_k(x_i)\to \sigma_k(x)$ as $i\to \infty$.  Suppose that $\pi_k(a) \in S_k\subset \Gamma_0(\mathscr M_k|_{Y_k})$   satisfies  
\begin{equation}\label{0 at x_i}
 \pi_k(a)(x_i)=0 \ \text{ for all } i\geq 1.   
\end{equation} 
We have to show that $\pi_k(a)(x)=0$. 
The condition \eqref{0 at x_i})is nothing but 
\[\nu_{x_i,k}|_{S_k}(\pi_k(a))=0\ \text{ for all } i\geq 1,\]
and to obtain $\pi_k(a)(x)=\nu_{x,k}|_{S_k}(\pi_k(a))=0$, we show that 
\[\|\nu_{x,k}|_{S_k}(\pi_k(a))\|<\delta\ \text{ for any }\ \delta>0.\] 
Choose $\mathbf{V}=(V_0, \dots, V_{r_K-1})\in \U^{(r_K)}$ with $x\in \mathbf{V}$ and $\gamma_{V_j}(\alpha^j(x))\neq 0$ for all $j=0, \dots, r_K-1$. 
Since $\sigma_k(x_i)\to \sigma_k(x)$ as $i\to \infty$, there is an $N\in \mathbb N$ such that for each $j=0, 1,\dots , r_K-1$,
\[|\gamma_{V_j}(\alpha^j(x_i))-\gamma_{V_j}(\alpha^j(x))|
< \min_{0\leq l\leq r_K-1}\gamma_{V_l}(\alpha^l(x)), \ \text{for all } i\geq N.
\] 
Then $\alpha^j(x_i)\in V_j$ for all $0\leq j\leq r_k-1$ and $i\geq N$.
For $x'=x_i$, $i\geq N$, or $x'=x$, consider the matrix representations 
\[\nu_{x',k}|_{S_k}(\pi_k(s))=M_{\pi_k(s)}(x')\] of $\pi_k(s)(x')$ for the generators $s\in \{\gamma_U, \Theta\xi_U\mid U\in \U\}$ of $S$ with respect to the basis $\{e_{\mathbf{V},j}(x_i)\mid 0\leq j\leq r_k-1\}$ of $\mathscr D_{x_i}^{(r_k)}$ for $i\geq N$.  
Then  $\sigma_k(x_i)\to \sigma_k(x)$ shows that 
\[M_{\pi_k(\gamma_U)}(x_i)=\diag(\gamma_U(x_i), \gamma_U(\alpha(x_i)), \dots, \gamma_U(\alpha^{r_k-1}(x_i)))\]
converges to 
\[M_{\pi_k(\gamma_U)}(x)=\diag(\gamma_U(x), \gamma_U(\alpha(x)), \dots, \gamma_U(\alpha^{r_k-1}(x))),\] 
and 
\[M_{\pi_k(\Theta\gamma_U)}(x_i)=\diag_{-1}(\lambda_{i0}, \lambda_{i_1}, \dots, \lambda_{i r_k-1}),\] where $\lambda_{ij}=\Theta(\alpha^{j+1}(x_i)\gamma_U(\alpha^j(x_i)g_{V_jU}(\alpha^j(x_i))$  converges to 
\[M_{\pi_k(\Theta\gamma_U)}(x)=\diag_{-1}(\lambda_{0}, \lambda_{1}, \dots, \lambda_{r_k-1}), \]
as $i\to \infty$, for every $U\in \U$, where $\lambda_{j}=\Theta(\alpha^{j+1}(x)\gamma_U(\alpha^j(x)g_{V_jU}(\alpha^j(x))$. 

Now, choose a polynomial 
\[p(s_1,\dots,s_{|\U|}, \hat{s}_1,\dots, \hat{s}_{|\U|}, t_1,\dots,t_{|\U|}, \hat{t}_1,\dots, \hat{t}_{|\U|})\] with $4|\U|$ variables such that 
\[\|a-p(\gamma_{U_l}, \Theta\xi_{U_l})\|<\delta,\] 
where by $p(\gamma_{U_l}, \Theta\xi_{U_l})$ we briefly denote the polynomial of  $\gamma_{U_l}$ and $\Theta\xi_{U_l}$ in place of $s_l$ and $t_l$, respectively, and their involutions in place of $\hat{s}_l$ and $\hat{t}_l$, respectively. 
Then 
\[ \|\nu_{x,k}|_{S_k}(\pi_k(a))-p(M_{\pi_k(\gamma_{U_l})}(x), M_{\pi_k(\Theta\xi_{U_l})}(x))\|<\delta\] 
where 
$p(M_{\pi_k(\gamma_{U_l})}(x), M_{\pi_k(\Theta\xi_{U_l})}(x))
\in M_{r_k}$  is the matrix when  
\[s_l=M_{\pi_k(\gamma_{U_l})}(x),\ t_l= M_{\pi_k(\Theta\xi_{U_l})}(x)\] 
and $\hat{s}_l=M_{\pi_k(\gamma_{U_l})}(x)^*$, $\hat{t}_l=M_{\pi_k(\Theta\xi_{U_l})}(x)^*$ for $l=1,\dots,|\U|$. 
Since 
\[M_{\pi_k(\gamma_{U_l})}(x_i)\to M_{\pi_k(\gamma_{U_l})}(x) \text{ and }
M_{\pi_k(\Theta\xi_{U_l})}(x_i)\to M_{\pi_k(\Theta\xi_{U_l})}(x)\] as $i\to \infty$, we have 
\begin{align*}&\ \|\nu_{x,k}|_{S_k}(\pi_k(a))-\nu_{x_i,k}|_{S_k}(\pi_k(a))\|\\
\leq &\ \|\nu_{x,k}|_{S_k}(\pi_k(a))-p(M_{\pi_k(\gamma_{U_l})}(x), M_{\pi_k(\Theta\xi_{U_l})}(x))\|\\ 
&\ +\|p(M_{\pi_k(\gamma_{U_l})}(x), M_{\pi_k(\Theta\xi_{U_l})}(x))-p(M_{\pi_k(\gamma_{U_l})}(x_i), M_{\pi_k(\Theta\xi_{U_l})}(x_i))\|\\
&\ +\|p(M_{\pi_k(\gamma_{U_l})}(x_i), M_{\pi_k(\Theta\xi_{U_l})}(x_i))-
\nu_{x_i,k}|_{S_k}(\pi_k(a))\|\\
<&\ 3\delta
\end{align*} for all $i$ sufficiently large. Thus $\|\nu_{x,k}|_{S_k}(\pi_k(a))\|<3\delta$ follows whenever $\nu_{x_i,k}|_{S_k}(\pi_k(a))=0$ for all $i$.

Since $\nu_{x,k}|_{S_k}(\pi_k(S))=\nu_{x,k}(\pi(S))=M_{r_k}$ is already seen as mentioned in (\ref{prim(S_k)}) and the proof of (c), we have $\prim(S_k)=\Delta_k$ by Lemma~\ref{lemma:SW}. 
Thus $\dim \mathrm{Ratio}(S) < \epsilon$ follows from  $\dim(\Delta_k) < \epsilon r_k$. 

It remains to show that for $f_i\in \mathcal F_1$, $g_i\in \mathcal F_2$, and  $\xi_i\in \E$, $\|\xi_i\|\leq 1$, 
\[\dist(f_i, S)<\epsilon\ \text{ and }\  \dist(g_i\xi_i,S)<\epsilon,\] 
$i=1,\dots, m$. This follows from our choice of cover $\mathcal{U}$. Indeed, let $x_U \in U$. 
Then by (a) 
 \[\| f_i - \sum_{U \in \mathcal{U}} f_i(x_U)\gamma_U \|  \leq \sup_{x \in X} \sum_{U \in \mathcal{U}} |f_i(x) - f_i(x_U)| \gamma_U(x) < \epsilon_0 < \epsilon.\]
Recall that $\{\eta_U\}_U\subset\E$ defined by $\eta_U(x)=h_U(x,\gamma_U(x)^{1/2})$ generates $\E$ as a right Hilbert $C(X)$-module, that is $\xi=\sum_U\eta_U\langle \eta_U,\xi\rangle_\E$ for all $\xi\in \E$. 
Let $g:=g_i$ ($\Theta g=g$ since $g\in C_0(X\setminus W)$) and $\xi:=\xi_i$ for some $i$, and let $s_U$ be the continuous function such that   $\xi(x)=h_U(x,s_U(x))$ for $x\in U$.
Since $\langle \eta_U,\xi\rangle_\E(x)=\langle \gamma_U(x)^{1/2}, s_U(x)\rangle_{\mathbb C}=\gamma_U(x)^{1/2}s_U(x)$, we write 
\[(\eta_U\langle\eta_U,\xi\rangle_\E)(x)=h_U(x,\gamma_U(x)^{1/2})\gamma_U(x)^{1/2}s_U(x)=\xi_U(x)s_U(x).\]
Then for $\sum_U g(\alpha(x_U))s_U(x_U)\Theta\xi_U\in S$,  we  have
\begin{align*}
&\ \|g\xi-    \sum_U g(\alpha(x_U))s_U(x_U)\Theta\xi_U\|\\
=&\ \|\Theta g\sum_U\eta_U\langle \eta_U,\xi\rangle_\E-    \sum_U g(\alpha(x_U))s_U(x_U)\Theta\xi_U\|\\
=&\ \|\sum_U \Theta g \xi_Us_U-    \sum_U g(\alpha(x_U))s_U(x_U)\Theta\xi_U\|\\
=&\ \|\sum_U \Theta g \xi_U(\cdot,1)s_U\gamma_U-    \sum_U g(\alpha(x_U))s_U(x_U)\Theta\xi_U(\cdot,1)\gamma_U\|\\
\leq &\ \|\sum_U \Big(g \xi_U(\cdot,1)s_U-    g(\alpha(x_U))s_U(x_U)\xi_U(\cdot,1)\Big)\gamma_U\|\\
< &\ \epsilon_0 < \epsilon,
\end{align*}
where the last inequality follows from (b) and 
\begin{align*} 
&\ |\, h_U^{-1}\big((g \xi_U(\cdot,1)s_U-    g(\alpha(x_U))s_U(x_U)\xi_U(\cdot,1))(x)\big)\,|\\ 
=&\ |\, g(\alpha(x))s_U(x)-g(\alpha(x_U))s_U(x_U)\,|< \epsilon,
\end{align*}  for $x\in U$.
\end{proof}

This immediately gives  us the following theorem.

\begin{theorem} \label{thm:locally sub hom}
Let $X$ be an infinite metrizable compact space,  $\alpha : X \to X$ be a minimal homeomorphism,  $\mathscr{V}$ a line bundle, and let $y \in X$. If $(X, \alpha)$ has the small boundary property, then the $\mathrm C^*$-algebra $\mathcal{O}(\E_{\{y\}})$ can be locally approximated by subhomogeneous $\mathrm C^*$-subalgebras with arbitrarily small dimension ratio.
\end{theorem}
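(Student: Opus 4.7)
The plan is to deduce Theorem~\ref{thm:locally sub hom} from the technical Theorem~\ref{thm:FindingS} by a standard density reduction. Given a finite subset $F \subset \mathcal{O}(\E_{\{y\}})$ and $\epsilon > 0$, I first approximate every element of $F$ within $\epsilon/3$ by a polynomial (of some bounded degree and with some uniform bound on coefficients) in a finite collection of generators of $\mathcal{O}(\E_{\{y\}})$, namely elements $f_1,\dots,f_m \in C(X)$ together with elements of the form $g_j \xi_j$ with $g_j \in C_0(X \setminus \{y\})$ and $\xi_j \in \E$, $\|\xi_j\| \leq 1$. Since the target $S$ will be a subalgebra, it suffices to approximate each of these generators sufficiently well in norm; the polynomial expressions in them will then be close to the corresponding polynomials in elements of $S$, with an error controlled by a fixed polynomial function of the individual approximation errors.

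Next, I replace each $g_j \in C_0(X \setminus \{y\})$ by a function $\tilde g_j \in C_0(X \setminus W)$ for some open neighborhood $W$ of $y$, as required by the hypothesis of Theorem~\ref{thm:FindingS}. Since each $g_j$ vanishes continuously at $y$, by choosing $W$ small enough and multiplying $g_j$ by a cutoff function in $C(X)$ which is $0$ on $W$ and $1$ outside a slightly larger neighborhood of $y$, I obtain $\tilde g_j \in C_0(X \setminus W)$ with $\|g_j - \tilde g_j\|$, and hence $\|g_j \xi_j - \tilde g_j \xi_j\|$, less than any prescribed $\delta$. This cutoff can be done uniformly over the finitely many $g_j$ by taking the common neighborhood of $y$ to be small enough.

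Now I apply Theorem~\ref{thm:FindingS} with this $W$, with $\F_1 = \{f_1,\dots,f_m\}$, with $\F_2 = \{\tilde g_1,\dots,\tilde g_m\} \subset C_0(X \setminus W)$ and the given $\xi_j$, and with tolerance $\epsilon'$ chosen small in terms of $\epsilon$ and the polynomial data above. This produces a closed neighborhood $Y \subset W$ of $y$ and a subhomogeneous subalgebra $S \subset \mathcal{O}(\E_Y)$ with $\dim\textup{Ratio}(S) < \epsilon'$ and $\dist(f_i, S),\, \dist(\tilde g_j \xi_j, S) < \epsilon'$ for all $i,j$. Because $\{y\} \subset Y$, we have $C_0(X \setminus Y) \subset C_0(X \setminus \{y\})$, so $\E_Y \subset \E_{\{y\}}$ and therefore $\mathcal{O}(\E_Y) \subset \mathcal{O}(\E_{\{y\}})$, placing $S$ inside the target algebra.

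Combining the three approximations (polynomial approximation of $F$ by generators, norm cutoff of the $g_j$, and approximation of the generators by elements of $S$) and using that $S$ is a subalgebra so polynomials in approximants lie in $S$, the triangle inequality yields $\dist(a, S) < \epsilon$ for every $a \in F$, while $\dim\textup{Ratio}(S) < \epsilon' < \epsilon$. All the real work has been absorbed into Theorem~\ref{thm:FindingS}, so no serious obstacle remains at this stage; the only care required is quantitative bookkeeping of the $\epsilon$'s and ensuring the polynomial-in-generators reduction commutes with passing from $g_j$ to $\tilde g_j$.
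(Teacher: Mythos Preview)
Your proposal is correct and takes essentially the same approach as the paper: the paper simply declares that Theorem~\ref{thm:locally sub hom} follows immediately from Theorem~\ref{thm:FindingS}, and your argument is a careful unpacking of that deduction via the standard density reduction (approximate by polynomials in generators, cut off the $g_j$ near $y$, and use the inclusion $\mathcal{O}(\E_Y)\subset\mathcal{O}(\E_{\{y\}})$). The only point worth noting is that the inclusion $\mathcal{O}(\E_Y)\subset\mathcal{O}(\E_{\{y\}})$ for $\{y\}\subset Y$, while true, is not a formal consequence of $\E_Y\subset\E_{\{y\}}$ alone---it relies on the gauge-invariant uniqueness theorem or the explicit description of these orbit-breaking algebras, but this is established in the references (e.g.\ \cite{AAFGJSV2024}) and safe to quote.
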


\begin{theorem} \label{thm:Z-stable}
Let $X$ be an infinite compact metric space, $\alpha : X \to X$ a minimal homeomorphism and $\mathscr V$ a line bundle over $X$. Suppose $(X, \alpha)$ has the small boundary property. Then $\mathcal O_{C(X)}(\Gamma(\mathscr{V}, \alpha))$ is $\mathcal Z$-stable.
\end{theorem}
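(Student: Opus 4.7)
The plan is to reduce to a simple orbit-breaking subalgebra, establish $\mathcal{Z}$-stability there using the local subhomogeneous approximations of Theorem~\ref{thm:locally sub hom}, and then transfer $\mathcal{Z}$-stability back to the full Cuntz--Pimsner algebra via the centrally large subalgebra formalism.

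First I would fix any $y\in X$. Local triviality of $\mathscr{V}$ together with finiteness of $\{\alpha^n(y):|n|\leq N\}$ lets one choose, for each $N\in\mathbb{Z}_{>0}$, a neighborhood $W_N$ of $y$ whose iterates $\alpha^n(W_N)$ all lie in trivializing charts for $-N\leq n\leq N$. Minimality on the infinite space $X$ forces $y$ to be aperiodic, so $\{y\}$ meets each $\alpha$-orbit exactly once. Theorem~\ref{centrally large} then yields that $\mathcal{O}(\E_{\{y\}})$ is a centrally large subalgebra of $\mathcal{O}(\E)$; by \cite[Corollary 3.11]{AAFGJSV2024} both algebras are simple, and both are separable, unital and nuclear (nuclearity of Cuntz--Pimsner algebras over $C(X)$ is standard).

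Next I would invoke Theorem~\ref{thm:locally sub hom}: the simple separable unital $\mathrm{C}^*$-algebra $\mathcal{O}(\E_{\{y\}})$ is locally approximated by subhomogeneous $\mathrm{C}^*$-subalgebras whose dimension ratios can be made arbitrarily small. From the general theory linking small dimension ratio approximations to regularity---small dimension ratio gives a uniform bound on the quantity controlling nuclear dimension of a subhomogeneous algebra, after which Winter's theorem applies; alternatively one routes through strict comparison via Toms' work and the Matui--Sato machinery, in the style of the Elliott--Niu argument for ordinary crossed products---this implies that $\mathcal{O}(\E_{\{y\}})$ is $\mathcal{Z}$-stable. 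Finally, \cite[Corollary 3.5]{ArBkPh-Z} states that in the presence of a $\mathcal{Z}$-stable centrally large subalgebra, the containing simple unital $\mathrm{C}^*$-algebra is itself $\mathcal{Z}$-stable; applied to $\mathcal{O}(\E_{\{y\}})\subset\mathcal{O}(\E)$, this concludes the argument.

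The main technical obstacle---producing, for arbitrary finite sets of elements and $\epsilon>0$, a subhomogeneous $\mathrm{C}^*$-subalgebra of $\mathcal{O}(\E_{\{y\}})$ with dimension ratio $<\epsilon$ that approximately captures them---has already been dealt with in Theorem~\ref{thm:FindingS}, where the small boundary property is crucially used to build partitions of unity with good orbit-capacity estimates. The residual work for the present theorem is merely assembly: verifying the hypotheses of Theorem~\ref{centrally large}, citing the appropriate regularity theorem connecting low dimension ratio approximations to $\mathcal{Z}$-stability, and invoking the centrally large subalgebra transfer result.
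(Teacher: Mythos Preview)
Your proposal is correct and matches the paper's approach. One refinement: of the two routes you sketch from small-dimension-ratio approximation to $\mathcal{Z}$-stability, the first does not work as stated---small dimension ratio bounds $\dim(\mathrm{Prim}_{d_i})/d_i$ but not $\dim(\mathrm{Prim}_{d_i})$ itself, so no uniform nuclear-dimension bound emerges and Winter's finite-nuclear-dimension theorem cannot be invoked directly. The paper follows your second route, exactly in the Elliott--Niu style: each approximant has finite-dimensional primitive spectrum, giving $\mathcal{O}(\E_{\{y\}})$ locally finite nuclear dimension; Toms' bound on radius of comparison by dimension ratio together with \cite[Lemma~5.8]{Niu:AHMeanDim} yields strict comparison; and one concludes via the Matui--Sato machinery when $\partial_e T$ is compact, or otherwise via Winter's pure-algebras theorem \cite[Corollary~7.4]{Win:pure} through the Cuntz semigroup.
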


\begin{proof}
Let $X$ be a compact metric space and $(X, \alpha)$ be a minimal dynamical system with the small boundary property and let $p : \mathscr{V} \to X$ be a line bundles. Since $(\alpha, X)$ is minimal, $\mathcal{O}(\E)$ is simple \cite[Corollary 3.11]{AAFGJSV2024}. Suppose $Y \subset X$ is a closed subset meeting every $\alpha$-orbit at most once such that for every $N \in \mathbb{Z}_{>0}$ there exists an open set $W_N \supset Y$ such that $\mathscr{V}|_{\alpha^n(W_N)}$ is trivial for every $-N \leq n \leq N$.  Then $\mathcal{O}(\E_Y)$ is (centrally) large subalgebra of $\mathcal{O}(\E)$ \cite[Theorem 6.14]{AAFGJSV2024}, hence is also simple.

Let $y \in Y$. By Theorem~\ref{thm:locally sub hom}, $\mathcal{O}(\E_{\{y\}})$ can be locally approximated by subhomogeneous $\mathrm C^*$-algebras of arbitrarily small dimension ratio. Since the approximating recursive subhomogeneous algebras in Theorem~\ref{thm:FindingS} have topological dimension not exceeding $r_K \epsilon$, they have finite nuclear dimension \cite[Theorem 6.1]{Winter:subhomdr}. Hence $\mathcal{O}(\E_{\{y\}})$ has locally finite nuclear dimension. Using \cite[Theorem 5.1]{Toms:CompSmoothDyn}, we see that the radius of comparison of a recursive subhomogenous $\mathrm C^*$-algebra is bounded above by its dimension ratio, so, applying \cite[Lemma 5.8]{Niu:AHMeanDim}, $\mathcal{O}(\E_{\{y\}})$ has strict comparison of positive elements. If $\partial_e T(A_{\{y\}})$ is compact, this implies $\mathcal{Z}$-stability \cite{KirRor:CentralSeqZ-Stab, Sato:FinDimTrace, TomWhiWin:findim_Trace}. Otherwise, as in \cite[Theorem 4.7]{EllNiu:MeanDimZero}, we may appeal to the argument in the proof of \cite[Theorem 1.2]{toms:ksd} using \cite[Lemma 5.10]{Niu:AHMeanDim} in place of \cite[Theorem 3.4]{toms:ksd}, to show that the Cuntz semigroups of $\mathcal{O}(\E_{\{y\}})$ and $\mathcal{O}(\E_{\{y\}}) \otimes \mathcal{Z}$ are isomorphic, and therefore $\mathcal{O}(\E_{\{y\}}) \cong \mathcal{O}(\E_{\{y\}}) \otimes \mathcal{Z}$ by \cite[Corollary 7.4]{Win:pure}.

By \cite[Theorem 6.14]{AAFGJSV2024}, $\mathcal{O}(\E_{\{y\}})$ is a centrally large subalgebra. Thus $\mathcal{O}(\E)$ is also $\mathcal{Z}$-stable \cite[Theorem 3.3]{ArBkPh-Z}.
\end{proof}

Since mean dimension zero is equivalent to having the small boundary property whenever $(X, \alpha)$ is minimal, we obtain the following corollary. 

\begin{corollary}
        Let $X$ be an infinite compact metric space, $\alpha : X \to X$ a minimal homeomorphism and $\mathscr V$ a line bundle over $X$. Suppose that $(X, \alpha)$ has mean dimension zero. Then $\mathcal O_{C(X)}(\Gamma(\mathscr{V}, \alpha))$ is $\mathcal Z$-stable.
    \end{corollary}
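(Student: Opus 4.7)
The plan is immediate: reduce to Theorem~\ref{thm:Z-stable} via the Lindenstrauss--Weiss equivalence between the small boundary property and mean dimension zero for minimal systems. This equivalence is already recorded in the preliminaries (Section~\ref{sec:prelim}): the forward direction, SBP $\Rightarrow$ $\mathrm{mdim} = 0$, is \cite[Theorem 5.4]{LindWeiss:MTD}, and the converse whenever the system is an extension of a minimal system is \cite[Theorem 6.2]{Lin:MD-SEF}. Since a minimal system is trivially an extension of itself, the hypothesis $\mathrm{mdim}(X,\alpha) = 0$ forces $(X,\alpha)$ to have the small boundary property.

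Once SBP is established, Theorem~\ref{thm:Z-stable} applies directly and yields that $\mathcal{O}_{C(X)}(\Gamma(\mathscr{V}, \alpha))$ is $\mathcal{Z}$-stable. No further input from the line-bundle structure or the Cuntz--Pimsner machinery is needed here beyond what is already encoded in Theorem~\ref{thm:Z-stable}.

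The corollary is therefore not a genuine theorem so much as a translation of the main result into the vocabulary of mean dimension, and there is no real obstacle to overcome: the only ``step'' is invoking the Lindenstrauss equivalence recorded in Section~\ref{sec:prelim} and then citing Theorem~\ref{thm:Z-stable}. Its role is expository—making transparent that the present work is the direct analogue, for twisted homeomorphisms, of the mean-dimension-zero criterion for $\mathcal{Z}$-stability of ordinary crossed products $C(X) \rtimes_\alpha \mathbb{Z}$ discussed in the introduction.
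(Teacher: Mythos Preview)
Your proposal is correct and matches the paper's own reasoning exactly: the corollary is stated immediately after Theorem~\ref{thm:Z-stable} with the one-line justification that mean dimension zero is equivalent to the small boundary property for minimal systems, and then Theorem~\ref{thm:Z-stable} applies. There is nothing to add.
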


We also obtain a sufficient condition for $\mathcal Z$-stability in terms of the tracial state space of $\mathcal O_{C(X)}(\Gamma(\mathscr{V}, \alpha))$.

\begin{corollary}
        Let $X$ be an infinite compact metric space, $\alpha : X \to X$ a minimal homeomorphism and $\mathscr V$ a line bundle over $X$. Let $A  = \mathcal O_{C(X)}(\Gamma(\mathscr{V}, \alpha))$. Suppose $T(A)$ is Bauer simplex. Then $\mathcal O_{C(X)}(\Gamma(\mathscr{V}, \alpha))$ is $\mathcal Z$-stable.
    \end{corollary}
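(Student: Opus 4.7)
The plan is to deduce this corollary from Theorem~\ref{thm:Z-stable} by showing that whenever $T(A)$ is a Bauer simplex, the underlying dynamical system $(X,\alpha)$ must have the small boundary property. The bridge is an affine weak-$*$ homeomorphism between $T(A)$ and the Choquet simplex $\mathcal{M}^1(X,\alpha)$ of $\alpha$-invariant Borel probability measures on $X$. Once this identification is in place, item~(4) in Section~\ref{sec:prelim} (which is \cite[Theorem 4.6]{EllNiu:SBP}) supplies the SBP, and Theorem~\ref{thm:Z-stable} yields $\mathcal{Z}$-stability of $A$.

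First I would construct the map $T(A) \to \mathcal{M}^1(X,\alpha)$. The gauge action of $\mathbb{T}$ on $A$ provides a faithful conditional expectation $E : A \to \pi_u(C(X))$ by averaging over the circle; this uses the $\mathbb{Z}$-grading $A = \overline{\bigoplus_n E_n}$ of~\eqref{eq:En}. Restricting a tracial state $\tau \in T(A)$ to $\pi_u(C(X)) \cong C(X)$ gives a Borel probability measure $\mu_\tau$ on $X$. Applying the trace property to the identities $\tau_u(\xi)^* \tau_u(\xi) = \pi_u(\langle \xi,\xi\rangle_\E)$ and $\tau_u(\xi)\tau_u(\xi)^* = \pi_u({}_\E\langle \xi,\xi\rangle) = \pi_u(\langle \xi,\xi\rangle_\E \circ \alpha^{-1})$, combined with right fullness of $\E$ (so that elements of the form $\langle \xi,\xi\rangle_\E$ have dense linear span in $C(X)$), one obtains $\int g\circ \alpha^{-1}\,d\mu_\tau = \int g\,d\mu_\tau$ for every $g \in C(X)$, i.e.\ $\mu_\tau \in \mathcal{M}^1(X,\alpha)$.

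Conversely, given $\mu \in \mathcal{M}^1(X,\alpha)$, the composition $\tau_\mu := \mu \circ E$ defines a state on $A$. Using the grading, the tracial identity $\tau_\mu(ab) = \tau_\mu(ba)$ is automatic on monomials of nonzero total degree (since $E$ annihilates them) and, for products landing in degree zero, reduces to $\alpha$-invariance of $\mu$ together with the same bimodule inner-product identities. One then verifies that $\tau \mapsto \mu_\tau$ and $\mu \mapsto \tau_\mu$ are mutually inverse affine homeomorphisms between $T(A)$ and $\mathcal{M}^1(X,\alpha)$ for the weak-$*$ topologies.

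The main obstacle is this trace-measure correspondence. In the untwisted case $\mathscr{V} = X \times \mathbb{C}$ it is the textbook identification $T(C(X)\rtimes_\alpha \mathbb{Z}) \cong \mathcal{M}^1(X,\alpha)$, and the point I would need to check carefully is that the nontriviality of $\mathscr V$ does not obstruct either direction. The formulas above use only the $C(X)$-valued inner products of $\E$, which are intrinsic to the Hilbert bimodule and unaffected by the choice of local chart, so the argument should go through unchanged; the $\mathrm{U}(1)$-valued transition cocycles of $\mathscr V$ disappear upon taking inner products. Once the identification is in hand, the hypothesis that $T(A)$ is Bauer translates directly to $\mathcal{M}^1(X,\alpha)$ being Bauer, whence $(X,\alpha)$ has the SBP by \cite[Theorem 4.6]{EllNiu:SBP}, and Theorem~\ref{thm:Z-stable} finishes the proof.
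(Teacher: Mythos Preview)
Your approach is correct and essentially identical to the paper's: both reduce the corollary to Theorem~\ref{thm:Z-stable} via the affine homeomorphism $T(A)\cong \mathcal{M}^1(X,\alpha)$ together with \cite[Theorem 4.6]{EllNiu:SBP}. The only difference is that the paper simply cites \cite[Proposition 4.5]{AAFGJSV2024} for the trace--measure correspondence, whereas you sketch its proof directly; your sketch is along the right lines and no further ideas are needed.
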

    \begin{proof}
        By \cite[Proposition 4.5]{AAFGJSV2024}, there is an affine homeomorphism $T(A) \cong M^1(X, \alpha)$, where $M^1(X, \alpha)$ is the space of $\alpha$-invariant Borel probability measures. Thus 
        $(X, \alpha)$ has the small boundary property \cite[Theorem 4.6]{EllNiu:SBP}, from which it follows that $\mathcal O_{C(X)}(\Gamma(\mathscr{V}, \alpha))$ is $\mathcal Z$-stable.  
    \end{proof}

 \begin{example} \label{ex:MDhom}
        Let $Q$ denote the Hilbert cube. Then $X = Q \times [0,1]$ is a Hilbert cube manifold. As pointed out on page 323 of \cite{GlasWeiss79}, there is a path-connected subgroup $\Gamma \subset \mathrm{Homeo}(X)$ such that $(X, \Gamma)$ is minimal, (see also  page~442 of~\cite{DirbMarMal2011}). Identify $\mathbb T^2$ with $\mathbb R^2 / \mathbb Z^2$, and for $z \in \mathbb R$, let $[z]$ denote its image in $\mathbb R /\mathbb Z$. Define 
        \[\alpha : \mathbb T^2 \to \mathbb T^2
        \]
        by 
        \[ \alpha(x, y) = ([x + \mu], [y + \nu]),\]
        for irrational numbers $\mu, \nu$. When $1, \mu, \nu$ are rationally independent, $(\mathbb T^2, \alpha)$ is minimal and uniquely ergodic. 
        
        Applying the main results of \cite{GlasWeiss79}, there exists a minimal homeomorphism $\beta : X \times \mathbb T^2 \to X \times \mathbb T^2$ which is uniquely ergodic. In particular the space $X \times \mathbb T^2$ is infinite-dimensional but $(X \times \mathbb T^2 , \beta)$ has mean dimension zero. 
        
        Since $Q$ and $[0,1]$ are both contractible, $X \times \mathbb T^2$ admits non-trivial complex line bundles by pulling back any line bundle over $\mathbb T^2$. Let $\mathscr V$ be such a bundle. Then $\mathcal O_{C(X \times \mathbb T^2)}(\Gamma(\mathscr{V}, \beta))$ is simple, separable, unital, nuclear and $\mathcal Z$-stable.
    \end{example}

\begin{notation} \label{note:C}
Let $\mathcal{C}$ denote the class of $\mathrm{C}^*$-algebras of the form  $\mathcal{O}(C_0(X \setminus Y)\Gamma(\mathscr{V}, \alpha))$ where $X$ infinite compact metric space, $\mathscr{V} = [V, p, X]$ is a line bundle, $\alpha : X \to X$ a minimal homeomorphism, and $Y \subset X$ is a closed subset meeting every $\alpha$-orbit at most once such that for every $N \in \mathbb{Z}_{>0}$ there exists an open set $W_N \supset Y$ such that $\mathscr{V}|_{\alpha^n(W_N)}$ is trivial for every $-N \leq n \leq N$. Note that if $Y = \emptyset$ we have $\mathcal{O}(\E_Y) = \mathcal{O}(\E)$.    

Let $\mathcal{C}_0 \subset \mathcal{C}$ denote the subclass consisting of the $\mathrm C^*$-algebras in $\mathcal{C}$ where the underlying dynamical system has the small boundary property (or, equivalently, mean dimension zero).
\end{notation}

\begin{theorem}[see for example \cite{CETWW, BBSTWW:2Col, EllGonLinNiu:ClaFinDecRan, GongLinNiue:ZClass, GongLinNiue:ZClass2, TWW, CGSTW:ClassI}]
 \label{ClassThm} Let $A$ and $B$ be separable, unital, simple, $\mathcal{Z}$-stable \mbox{$\mathrm{C}^*$-algebras} and which satisfy the UCT. Suppose there is an isomorphism 
\[ \psi : \Ell(A) \to \Ell(B).\]
Then there is a $^*$-isomorphism 
\[ \Psi : A \to B,\]
satisfying $\Ell(\Psi) = \psi$.
\end{theorem}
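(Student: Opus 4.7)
My approach would be to follow the standard Elliott approximate intertwining framework that underlies the cited classification theorems. First, I would use the UCT for $A$ and $B$ to lift the Elliott invariant isomorphism $\psi$ to a class $\kappa \in KL(A,B)$ inducing the prescribed maps on $K_0$ and $K_1$ (with the preservation of the order and unit coming from the compatibility of $\psi$ with these structures). Combined with the tracial data coming from $\psi$, this assembles a complete set of compatible invariants at the level of KK-theory together with traces, in the form that classification-of-morphisms results take as input.

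The heart of the proof would then be to invoke a pair of existence and uniqueness theorems for $^*$-homomorphisms between $\mathcal Z$-stable algebras satisfying the UCT. The existence theorem produces a $^*$-homomorphism $\phi : A \to B$ with $KL$-class $\kappa$ realizing the prescribed action on traces; the uniqueness theorem asserts that any two $^*$-homomorphisms with the same invariant data are approximately unitarily equivalent in a suitable (typically weak-asymptotic) sense. I would feed these into an Elliott intertwining argument, alternating between maps $A \to B$ and $B \to A$ realizing $\psi$ and $\psi^{-1}$ and producing unitaries that conjugate the compositions closer and closer to the identities of $A$ and $B$. The limiting procedure then yields the desired $^*$-isomorphism $\Psi : A \to B$ with $\Ell(\Psi) = \psi$.

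The main obstacle is of course establishing the existence and especially the uniqueness theorems at this level of generality. The $\mathcal Z$-stability hypothesis is indispensable here: by the resolution of the Toms--Winter conjecture for simple separable nuclear $\mathrm C^*$-algebras in the cited works (TWW, CETWW), it is equivalent to finite nuclear dimension, which supplies the regularity needed to run the tracial approximation and quasidiagonality arguments that ultimately feed into both theorems. The UCT is equally essential, since without it one cannot even express the lifting data as a $KL$-class compatible with $\psi$. In practice, the bulk of the technical effort lies on the uniqueness side, where one must propagate approximate agreement of invariants to approximate unitary equivalence of maps; historically this has required substantial machinery around tracial approximation by tractable building blocks (recursive subhomogeneous algebras, 2-coloured classification, and so on), which is precisely the sort of structural information that the earlier sections of this paper have been working to establish for the algebras $\mathcal{O}_{C(X)}(\Gamma(\mathscr V,\alpha))$ themselves.
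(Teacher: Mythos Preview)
The paper does not prove this theorem at all: it is stated with the bracket ``see for example \ldots'' and a list of citations, and is used as a black box in the very next result (Theorem~\ref{thm:classification1}). This is the Elliott classification theorem for simple separable unital nuclear $\mathcal Z$-stable $\mathrm C^*$-algebras satisfying the UCT, a deep result whose proof is the combined output of the cited references and lies well beyond the scope of this paper. Your outline is a fair high-level description of the strategy those references implement---lift the Elliott data to a $KL$-class via the UCT, establish existence and uniqueness theorems for $^*$-homomorphisms, and run an Elliott intertwining---though the actual execution occupies several hundred pages.

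Two remarks on your sketch. First, both the paper's statement and your argument omit the hypothesis that $A$ and $B$ are \emph{nuclear}, which is essential: you implicitly use it when invoking the Toms--Winter equivalence of $\mathcal Z$-stability and finite nuclear dimension, and all of the cited classification references assume it. Second, your closing sentence misreads the role of the earlier sections of this paper. They do not contribute to proving Theorem~\ref{ClassThm}; rather, they establish $\mathcal Z$-stability for the particular algebras $\mathcal O_{C(X)}(\Gamma(\mathscr V,\alpha))$ so that Theorem~\ref{ClassThm} can be \emph{applied} to them, which is exactly what happens in Theorem~\ref{thm:classification1}.
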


\begin{theorem} \label{thm:classification1}
 Suppose that $A, B \in \mathcal{C}_0$ and 
	\[ \psi : \Ell(A) \to \Ell(B) \]
	is an isomorphism. Then there exists a $^*$-isomorphism 
	\[ \Psi : A \to B,\]
satisfying $\Ell(\Psi) = \psi$.
\end{theorem}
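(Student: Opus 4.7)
The plan is to verify the hypotheses of Theorem~\ref{ClassThm} for every $A \in \mathcal{C}_0$ and apply the classification theorem directly. Thus I need to check that each such $A = \mathcal{O}(\E_Y)$ is separable, unital, simple, nuclear, $\mathcal{Z}$-stable, and satisfies the UCT. Separability is immediate since $X$ is a compact metric space. Unitality holds because $A$ contains $C(X)$ as a unital $\mathrm{C}^*$-subalgebra. Nuclearity of $A$ follows from standard results on Cuntz--Pimsner algebras of countably generated correspondences over nuclear $\mathrm{C}^*$-algebras (here the coefficient algebra $C(X)$ is even commutative). Simplicity follows from \cite[Corollary 3.11]{AAFGJSV2024} when $Y = \emptyset$, and from the same corollary together with the hypothesis that $Y$ meets each $\alpha$-orbit at most once in the remaining cases.

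For $\mathcal{Z}$-stability, Theorem~\ref{thm:Z-stable} gives the result for $\mathcal{O}(\E)$ directly; its proof additionally establishes $\mathcal{Z}$-stability of the single-point orbit-breaking subalgebra $\mathcal{O}(\E_{\{y\}})$ via the RSH approximations of Theorem~\ref{thm:FindingS}. For a general closed $Y$ in the definition of $\mathcal{C}_0$, choosing any $y \in Y$ we obtain $\mathcal{O}(\E_{\{y\}}) \subseteq \mathcal{O}(\E_Y)$ as a centrally large subalgebra via Theorem~\ref{centrally large} (applied with singleton $\{y\}$ in the ambient algebra $\mathcal{O}(\E_Y)$), and $\mathcal{Z}$-stability transfers up to $\mathcal{O}(\E_Y)$ by \cite[Theorem 3.3]{ArBkPh-Z}. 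The remaining condition, the UCT, is the step I expect to be the main obstacle; the approach is to invoke the standard Pimsner--Katsura six-term exact sequence for Cuntz--Pimsner algebras, which expresses $\mathcal{O}(\E_Y)$ as KK-equivalent to a construction built from the coefficient algebra $C(X)$ through UCT-preserving operations. Since $C(X)$ is commutative (hence Type I, hence satisfies the UCT), this places $\mathcal{O}(\E_Y)$ in the UCT class as well.

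With all hypotheses verified, Theorem~\ref{ClassThm} applied to $A, B \in \mathcal{C}_0$ and the given isomorphism $\psi : \Ell(A) \to \Ell(B)$ produces a $^*$-isomorphism $\Psi : A \to B$ satisfying $\Ell(\Psi) = \psi$, completing the proof. The bulk of the work is not in this theorem itself but in the upstream verification of $\mathcal{Z}$-stability carried out in Sections~\ref{subsection:RSH}--\ref{sec:Zstab}; here one merely assembles the pieces and appeals to the external classification and UCT results.
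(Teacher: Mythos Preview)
Your overall strategy matches the paper's: verify the hypotheses of Theorem~\ref{ClassThm} and apply it. Separability, unitality, simplicity, nuclearity, and the UCT are handled essentially as the paper does (the paper cites \cite[Proposition 8.8]{Katsura2004} directly for the UCT rather than sketching the six-term sequence argument, but the substance is the same).

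The one place where your route diverges is the transfer of $\mathcal{Z}$-stability to $\mathcal{O}(\E_Y)$ for general $Y$. You claim $\mathcal{O}(\E_{\{y\}}) \subset \mathcal{O}(\E_Y)$ is centrally large ``via Theorem~\ref{centrally large} applied with singleton $\{y\}$ in the ambient algebra $\mathcal{O}(\E_Y)$,'' but Theorem~\ref{centrally large} only asserts that $\mathcal{O}(\E_Y)$ is centrally large in $\mathcal{O}(\E)$; it says nothing about one orbit-breaking subalgebra sitting inside another. The paper instead transfers $\mathcal{Z}$-stability \emph{downward}: $\mathcal{O}(\E)$ is $\mathcal{Z}$-stable by Theorem~\ref{thm:Z-stable}, $\mathcal{O}(\E_Y)$ is centrally large in $\mathcal{O}(\E)$ by Theorem~\ref{centrally large}, and $\mathcal{Z}$-stability passes to centrally large subalgebras. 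Your argument is easily repaired this way (or by noting that both $\mathcal{O}(\E_{\{y\}})$ and $\mathcal{O}(\E_Y)$ are centrally large in $\mathcal{O}(\E)$, so $\mathcal{Z}$-stability of any one of the three implies it for all), but as written the citation does not support the step.
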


\begin{proof}
It is enough to show that if $A \in \mathcal{C}_0$ then $A$ is separable, unital, simple, $\mathcal{Z}$-stable and satisfies the UCT. Any $A \in \mathcal{C}_0$ is separable and unital. Moreover, \cite[Proposition 8.8]{Katsura2004}, since $C(X)$ is commutative, any $A \in \mathcal{C}$ satisfies the UCT. 

 If $Y \subset X$ is a closed subset meeting every $\alpha$-orbit at most once such that for every $N \in \mathbb{Z}_{>0}$ there exists an open set $W_N \supset Y$ such that $\mathscr{V}|_{\alpha^n(W_N)}$ is trivial for every $-N \leq n \leq N$, then $\mathcal{O}(\E_Y)$ a centrally large subalgebra of $\mathcal{O}(\E)$ \ref{centrally large}. Thus $\mathcal Z$-stability passes from $\mathcal O(\E)$ to $\mathcal O(\E_Y)$. Since $(X, \alpha)$ is a minimal dynamical with the small boundary property, $\mathcal O(\E)$ is $\mathcal Z$-stable by Theorem~\ref{thm:Z-stable}.

Thus every $A \in \mathcal{C}_0$ is separable, unital, simple, $\mathcal{Z}$-stable and satisfies the UCT. This proves the theorem.
\end{proof}


\section{Tensor products} \label{sec:TP}

Giol and Kerr have constructed examples of minimal dynamical systems giving rise to crossed products which are not $\mathcal{Z}$-stable. Since such crossed products are isomorphic to Cuntz--Pimsner algebra $\mathcal{O}(\E)$ with $\E = \Gamma(\mathscr{V}, \alpha)$ for $\mathscr{V}$ a trivial vector bundle, it is unlikely that we will be able to enlarge the class $\mathcal{C}_0$ of Notation~\ref{note:C}. 

Nevertheless, we are able to say something about tensor products of $\mathrm C^*$-algebras in the larger class $\mathcal{C}$ (Notation~\ref{note:C}). Indeed, we are able to adapt results of \cite{EllNiu:MeanDimZero} to our setting to show that, regardless of mean dimension, tensor products of the form $\mathcal{O}(\Gamma(\mathscr{V}, \alpha)) \otimes \mathcal{O}(\Gamma(\mathscr{W}, \beta))$, for $(\alpha,X), (\beta,Y)$ minimal and $\mathscr{V}, \mathscr{W}$ line bundles over $X, Y$ respectively, are always $\mathcal{Z}$-stable.

\begin{theorem} \label{thm:SubinTensorProd}
Let $X$ be an infinite compact metric space, $\mathscr{V}$ a line bundle over $X$ and $\alpha : X \to X$ a minimal homeomorphism. Let $\E = \Gamma(\mathscr{V}, \alpha)$. Fix $y \in X$ and  $W \subset X$ an open neighborhood of $y$. Suppose $\F_1 = \{ f_1, \dots, f_m\} \subset C(X)$, $\F_2 = \{g_{1}, \dots, g_{m} \} \subset C_0(X \setminus W)$,  $\{\xi_1,\dots,\xi_m\mid \xi_i\in \E,\ \|\xi_i\|\leq 1\}$ are given. Then for any $\epsilon > 0$ there is  $R>0$ such that for any $J \in \mathbb N$, there exists $Y \subset W$  a closed neighborhood of $y$ and a subhomogeneous $C^*$-algebra $S \subset \mathcal{O}(\E_Y)$ such that 
\begin{enumerate}[label=\textup{(\arabic*)}]
\item $\dim\textup{Ratio} (S) \leq R$, 
\item the dimension of each irreducible representation is at least $J$, 
\item $\dist(f_i, S) < \epsilon$, for every $1 \leq i \leq m$, 
\item $\dist(g_{i} \xi_i, S) < \epsilon$, for every $1 \leq i \leq m$.
\end{enumerate}
\end{theorem}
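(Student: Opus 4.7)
The plan is to rerun the construction from the proof of Theorem~\ref{thm:FindingS} with two modifications: the small boundary property is not invoked, and the first return times to $Y$ are instead forced to be uniformly large using minimality alone. This trades the arbitrarily small dimension ratio of Theorem~\ref{thm:FindingS} for a fixed cover-dependent bound $R$, which is precisely what the statement requires, while condition (2) is purchased by shrinking $Y$.

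The first step is to fix a finite open cover $\U$ of $X$ trivializing $\mathscr V$ and satisfying conditions (a) and (b) from the proof of Theorem~\ref{thm:FindingS} for some $\epsilon_0 < \epsilon/6$; this cover depends only on $\F_1, \F_2, \{\xi_i\}$ and $\epsilon$. I would then define
\[ R := 2|\U|^2 \]
and pick any partition of unity $\{\gamma_U\}_{U \in \U}$ subordinate to $\U$, with no orbit-capacity estimate needed. Given $J \in \mathbb N$, use minimality of $\alpha$ to select a closed neighborhood $Y \subset W$ of $y$ whose first return times $r_1 < \cdots < r_K$ all satisfy $r_1 \geq J$; this is achievable because shrinking $Y$ forces the first return times to grow. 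With the same $\Theta$ and $\xi_U$ as in Theorem~\ref{thm:FindingS}, set $S := C^*(\{\gamma_U\}_{U \in \U}, \{\Theta \xi_U\}_{U \in \U}) \subset \mathcal O(\E_Y)$. Condition (2) is then automatic, since every irreducible representation of $S$ has dimension $r_k \geq r_1 \geq J$.

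The verification that $S$ is subhomogeneous, that its irreducible representations are parameterized by the $Y_k$ with dimension $r_k$, and that $\prim_{r_k}(S) \cong \sigma_k(Y_k)$ via Lemma~\ref{lemma:SW}, carries over verbatim from the proof of Theorem~\ref{thm:FindingS}; none of those arguments used the small boundary property. The bound on the dimension ratio now comes from the ambient dimension of the target of $\sigma_k$: since $\sigma_k$ takes values in $\mathbb R^{|\U|(2r_k - 1)} \oplus \mathbb D^{|\U|(|\U|-1) r_k}$, a space of topological dimension at most $2|\U|^2 r_k$, one obtains $\dim \prim_{r_k}(S) \leq 2|\U|^2 r_k$ and hence $\dim\textup{Ratio}(S) \leq R$, independently of $J$. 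Conditions (3) and (4) follow from the closing estimates in the proof of Theorem~\ref{thm:FindingS}, which depend only on the defining properties (a) and (b) of $\U$.

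The only subtle point to manage is the order of quantifiers: $R$ must be extracted from the cover $\U$ (and hence only from $\F_1, \F_2, \{\xi_i\}, \epsilon$) \emph{before} the choice of $J$ is made, so that the ambient-dimension bound does not deteriorate as $J$ grows. Beyond this bookkeeping the argument is strictly easier than Theorem~\ref{thm:FindingS}: the hard SBP-driven orbit-capacity estimate on the image of $\sigma_k$ is replaced by a trivial ambient-dimension bound, and everything else goes through unchanged.
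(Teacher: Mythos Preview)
Your proposal is correct and matches the paper's own proof essentially line for line: the paper also sets $R = 2|\mathcal U|^2$ from the cover before $J$ is given, chooses $Y$ by minimality so that $r_1 \geq J$, defines $S$ and $\sigma_k$ exactly as in Theorem~\ref{thm:FindingS}, and replaces the SBP-based estimate on $\dim(\Delta_k)$ by the ambient-dimension bound $\dim(\Delta_k) \leq 2|\mathcal U|^2 r_k - |\mathcal U| < 2|\mathcal U|^2 r_k$. The only cosmetic difference is that the paper uses $\epsilon$ directly in conditions (a) and (b) rather than an auxiliary $\epsilon_0 < \epsilon/6$, since the factor of $6$ was only needed in Theorem~\ref{thm:FindingS} for the finer image-dimension count that is now bypassed.
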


\begin{proof}
As in Theorem~\ref{thm:FindingS}, if  $(U,h_U)$ is a chart of $\mathscr V$,  we denote by $s_{i,U}\in C(X)$ continuous function induced by the section $\xi_i|_U$. In particular, $s_{i,U}$ satisfies
\[\xi_i(x)=h_U (x, s_{i,U}(x)) \ \text{for }\, x \in U.\]
Fix a finite open cover $\mathcal{U}$ of $X$ such that $\mathscr V|_U$, $U\in \U$, is trivial with chart map $h_U:U\times \mathbb C\to \mathscr V|_U$  and the following hold:
\begin{enumerate}[label=(\alph*)]
  \item   $|f_i(x) - f_i(x') | < \epsilon$, 
  \item \( |g_i(\alpha(x))s_{i,U}(x)-g_i(\alpha(x'))s_{i,U}(x')| < \epsilon \),
\end{enumerate}
for every $x, x' \in U$, $U \in \mathcal{U}$, and  $1 \leq i \leq m$. For $U,U'\in \U$, $g_{U,U'}:U\cap U'\to \mathbb U(1)$ denotes the transition function satisfying
\[h_U(x,\lambda)=h_V(x, g_{U',U}(x)\lambda),\ \lambda\in \mathbb C,\  x\in U\cap U'.\]
We put $ g_{U,U'}=0$ if $U \cap U' =\emptyset$ for $U,U' \in \U$.

Let $R := 2|\mathcal U|^2$ and let $J \in \mathbb Z_{>0}$ be arbitrary. 

As in the proof of Theorem~\ref{thm:FindingS}, by minimality of $\alpha$ there is a closed neighbourhood $Y \subset W$ of $y$ with first return times $r_1 < r_2 < \dots < r_K$ satisfying $J \leq r_1$.

Let $W_0 \subset W$ be an open set containing $Y$, and let $\Theta : X \to [0,1]$ be a function satisfying $\Theta(x) = 0$ if and only if $x \in Y$ and $\Theta|_{X \setminus  W_0} = 1$. Note from $W_0 \subset W$  that $\Theta g_i = g_i$ for every $1 \leq i \leq m$. 

Now consider the $C^*$-subalgebra 
\[S:=C^*(\{\gamma_U\}_{U\in \mathcal U},\, \{\Theta\xi_U\}_{U\in \mathcal U})\subset \mathcal O(\E_Y).\]
Note that $S$ is subhomogeneous. We show that $S$ and $Y$ satisfy the conditions of the Theorem for $R$ and $J$.

Define $\gamma_\U$, $\Psi :X\to \mathbb R^{|\U|}$ by 
\[\gamma_\U(x):=\bigoplus_{U\in \U} \gamma_U(x) \ \ \text{ and } \ \Psi(x)=\bigoplus_{U \in \mathcal{U}} \Theta(\alpha(x))\gamma_U(x).\]  
Let $\mathbb D:=\{z\in \mathbb C\mid |z|\leq 1\}$ and define   $\Phi : X \to \mathbb{D}^{|\U|(|\U|-1)}$ by
\[  \Phi(x) := \bigoplus_{U\neq V \in \mathcal{U}} \gamma_U(x)\gamma_V(x) g_{UV}(x).\] 
For every $1 \leq k \leq K$, let
 \[ \sigma_k : \overline{Y_k} \to \mathbb{R}^{|\mathcal{U}|(2r_k-1)} \oplus \mathbb{D}^{|\U|(|\U|-1) r_k}\] 
 be the map given by
 \begin{align*} 
 \sigma_{k}(x) =& (\gamma_\U(x), \gamma_\U \circ\alpha(x), \dots, \gamma_\U\circ \alpha^{r_k-1}(x), \\ 
  &\ \ \ \ \Psi(x), \Psi \circ\alpha(x), \dots, \Psi\circ \alpha^{r_k-2}(x), \\
 & \ \ \ \ \ \ \ \Phi(x), \Phi\circ\alpha(x), \dots, \Phi\circ\alpha^{r_k-1}(x)). 
 \end{align*} 
 
Let $\Delta_k = \sigma_k(Y_k)$. Then 
\begin{align*}
\dim (\Delta_k) &\leq \dim( \mathbb{R}^{|\mathcal{U}|(2r_k-1)} \oplus \mathbb{D}^{|\U|(|\U|-1) r_k})\\
&= 2 |\mathcal U|^2 r_k - |\mathcal U| \\
&< 2 |\mathcal U|^2 r_k .
\end{align*}

As in the proof of Theorem~\ref{thm:FindingS}, we have $\prim_{r_k}(S) \cong \Delta_k$, and the irreducible representations of $S$ have dimension $r_1< r_2 < \cdots < r_K$. It follows that 
\[ \dim \mathrm{Ratio}(S) < 2|\mathcal U |^2 = R,\]
showing (1) and (2). The proof that $\dist(f_i, S) < \epsilon$ and  $\dist(g_i \xi_i, S) < \epsilon$ for every $1 \leq i \leq m$ is the same as in Theorem~\ref{thm:FindingS}, showing (3) and (4).
\end{proof}
 
The \emph{growth rank} of a $\mathrm C^*$-algebra $A$, introduced by Toms in \cite{Toms:growth}, is the least non-negative integer $n$ such that
\[ A^{\otimes n} = \underbrace{A \otimes_{\min} A \otimes_{\min} \cdots \otimes_{\min} A}_{\text{$n$ times}}\]
is $\mathcal{Z}$-stable, assuming the minimal tensor product. If no such integer exists, then we say $A$ has infinite growth rank \cite[Definition 2.1]{Toms:growth}.

\begin{theorem}
Let $A, B \in \mathcal{C}$. Then
 \[ A \otimes B \cong A \otimes B \otimes \mathcal{Z}.\]
In particular, if $A \in \mathcal{C}$ then $A$ has growth rank $2$.
\end{theorem}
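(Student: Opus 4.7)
The plan is to mimic the proof of Theorem~\ref{thm:Z-stable}, using Theorem~\ref{thm:SubinTensorProd} in place of Theorem~\ref{thm:FindingS}. The key observation is that although Theorem~\ref{thm:SubinTensorProd} only bounds the dimension ratio of a subhomogeneous approximant by a fixed constant $R$, it also forces all irreducible representations to have dimension at least an arbitrarily prescribed $J$. Tensoring two such approximants from the two factors will then make the dimension ratio decay like $1/J$, hence can be made arbitrarily small.

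First, using \cite[Theorem 3.3]{ArBkPh-Z} together with Theorem~\ref{centrally large}, I would reduce to the case where $A = \mathcal{O}(\E^A_{\{y_A\}})$ and $B = \mathcal{O}(\E^B_{\{y_B\}})$ are orbit-broken at single points satisfying the trivialization hypothesis. Given a finite set $F \subset A \otimes B$ and $\epsilon > 0$, approximating elements of $F$ by sums of elementary tensors of the natural generators of $A$ and $B$ reduces matters to finite sets $\mathcal F^A_1, \mathcal F^A_2 \subset A$ and $\mathcal F^B_1, \mathcal F^B_2 \subset B$ of the form occurring in Theorem~\ref{thm:SubinTensorProd}. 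Applying that theorem to each factor with tolerance $\epsilon/C$ for an appropriate constant $C$ yields dimension-ratio bounds $R_A, R_B$ and, for each $J \in \mathbb N$, subhomogeneous $S_A \subset A$, $S_B \subset B$ with $\dim\mathrm{Ratio}(S_A) \leq R_A$ and $\dim\mathrm{Ratio}(S_B) \leq R_B$, all irreducible representations of dimension at least $J$, and approximating their respective finite sets.

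Next, $S := S_A \otimes S_B \subset A \otimes B$ is subhomogeneous with irreducible representations of dimensions $r_k^A r_l^B \geq J^2$, and by standard type I tensor-product theory its primitive ideal spaces decompose as $\mathrm{Prim}_{r_k^A r_l^B}(S) \cong \mathrm{Prim}_{r_k^A}(S_A) \times \mathrm{Prim}_{r_l^B}(S_B)$. Subadditivity of topological dimension for metrizable spaces then gives
\[ \dim\mathrm{Ratio}(S) \leq \max_{k,l}\frac{R_A r_k^A + R_B r_l^B}{r_k^A r_l^B} \leq \frac{R_A + R_B}{J}, \]
which is $< \epsilon$ once $J$ is taken large. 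Hence $A \otimes B$ is locally approximated by subhomogeneous subalgebras of arbitrarily small dimension ratio, and the final steps of the proof of Theorem~\ref{thm:Z-stable} apply verbatim: the approximants have finite nuclear dimension (Winter), their radius of comparison is bounded by their dimension ratio (Toms, Niu), and $\mathcal Z$-stability then follows from Kirchberg--R\o rdam/Sato/TWW or the Cuntz-semigroup argument of \cite{toms:ksd, Niu:AHMeanDim} depending on compactness of the extreme trace boundary. The growth-rank assertion is then immediate on setting $B = A$.

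I expect the main obstacle to be verifying the product decomposition $\mathrm{Prim}_{r_k^A r_l^B}(S_A \otimes S_B) \cong \mathrm{Prim}_{r_k^A}(S_A) \times \mathrm{Prim}_{r_l^B}(S_B)$ together with the required dimension subadditivity; since $S_A$ and $S_B$ are subhomogeneous with recursive decompositions built from sections of locally trivial matrix bundles (Section~\ref{subsection:RSH}), this should reduce to direct bookkeeping across the strata. A secondary technical point is the initial reduction to single-point orbit-breaks, which requires tensor products of centrally large subalgebras to remain centrally large in the tensor-product ambient algebra; this should follow from the definitional approximation scheme of \cite{ArchPhil:SR1} but is worth checking carefully.
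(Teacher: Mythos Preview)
Your proposal is correct and follows essentially the same route as the paper. The two points you flag as potential obstacles are precisely where the paper invokes external references: the tensor-product dimension-ratio decay is handled by citing \cite[Lemma~5.4]{EllNiu:MeanDimZero} (which packages the $\mathrm{Prim}$-decomposition and dimension subadditivity you sketch), and the passage of $\mathcal Z$-stability through centrally large subalgebras in the tensor product is handled by \cite[Proposition~2.5]{ArchPhil:SR1} together with Theorem~\ref{centrally large}.
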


\begin{proof}
 Let $X$ and $Y$ be infinite compact metrizable spaces, $(\alpha,X), (\beta,Y)$  minimal dynamical systems, and $\mathscr{V}, \mathscr{W}$ line bundles over $X, Y$, respectively.  Let $x \in X$ and $y \in Y$. Set $\E = \Gamma(\mathscr{V}, \alpha)$ and $\F = \Gamma(\mathscr{W}, \beta)$, and let $\E_{\{x\}} = C_0(X \setminus \{x\}) \E$ and $\F_{\{y\}} = C_0(Y \setminus \{y\}) \F$. 

 By Theorem~\ref{thm:SubinTensorProd}, for any finite subset $\F_1 \subset \mathcal{O}(\E_{\{x\}})$, every finite subset $\F_2 \subset \mathcal{O}(\F_{\{y\}})$ and any $\epsilon > 0$ there exists $R >0$ and sequences of unital $\mathrm{C}^*$-subalgebras $(S_n)_{n \in \mathbb{N}} \subset \mathcal{O}(\E_{\{x\}})$,  $(T_n)_{n \in \mathbb{N}} \subset \mathcal{O}(\F_{\{x\}})$ such that
 \begin{enumerate}[label=(\alph*)]
     \item for every $n \in \mathbb{N}$, $S_n$ and $T_n$ are subhomogeneous $\mathrm C^*$-algebras with dimension ratio at most $R$,
     \item $S_n$, respectively $T_n$, contains $\F_1$, respectively $\F_2$, up to $\epsilon$,
     \item the smallest dimension of irreducible representation of $S_n$, respectively $T_n$, goes to infinity as $n \to \infty$.
 \end{enumerate}
 By \cite[Lemma 5.4]{EllNiu:MeanDimZero}, the tensor product $\mathcal{O}(\E_{\{x\}}) \otimes \mathcal{O}(\F_{\{y\}})$ can be locally approximated by subhomogenous $\mathrm C^*$-algebras with arbitrarily small dimension growth. It follows, as in the proof of Theorem~\ref{thm:Z-stable}, that $ \mathcal{O}(\E_{\{x\}}) \otimes \mathcal{O}(\F_{\{y\}})$ is $\mathcal{Z}$-stable. Let $X_0 \subset X$ be a closed subset meeting every $\alpha$-orbit at most once such that for every $N \in \mathbb{Z}_{>0}$ there exists an open set $V_N \supset X_0$ such that $\mathscr{V}|_{\alpha^n(V_N)}$ is trivial for every $-N \leq n \leq N$. Let $Y_0 \subset Y$ be a closed subset meeting every $\beta$-orbit at most once such that for every $N \in \mathbb{Z}_{>0}$ there exists an open set $W_N$ such that $\mathscr{W}|_{\alpha^n(W_N)}$ is trivial for every $-N \leq n \leq N$. By \cite[Proposition 2.5]{ArchPhil:SR1} with \cite[Theorem 6.14]{AAFGJSV2024}, $\mathcal{O}(\E_{X_0}) \otimes \mathcal{O}(\F_{Y_0})$ and $\mathcal{O}(\E_{x}) \otimes \mathcal{O}(\F_{y})$ are centrally large subalgebras of $\mathcal{O}(\E) \otimes \mathcal{O}(\F)$. It follows that $\mathcal{Z}$-stability passes from $ \mathcal{O}(\E_{\{x\}}) \otimes \mathcal{O}(\F_{\{y\}})$ to $ \mathcal{O}(\E) \otimes \mathcal{O}(\F)$, and from $ \mathcal{O}(\E) \otimes \mathcal{O}(\F)$ to $\mathcal{O}(\E_{X_0}) \otimes \mathcal{O}(\F_{Y_0})$. Thus if $A, B \in \mathcal{C}$, we have $A \otimes B \cong A \otimes B \otimes \mathcal{Z}$.
\end{proof}

This gives us the following theorem.

\begin{theorem} \label{thm:classificationTP}
 Suppose that $A, B, C, D\in \mathcal{C}$ and 
	\[ \psi : \Ell(A \otimes B) \to \Ell(C \otimes D) \]
	is an isomorphism. Then there exists a $^*$-isomorphism 
	\[ \Psi : A \otimes  B \to C \otimes D,\]
satisfying $\Ell(\Psi) = \psi$.
\end{theorem}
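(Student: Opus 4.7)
The plan is to check that $A \otimes B$ and $C \otimes D$ satisfy all the hypotheses of the classification theorem (Theorem~\ref{ClassThm}), so that the given Elliott invariant isomorphism $\psi$ lifts to a $^*$-isomorphism. The hard analytic work has already been done in the preceding theorem, which shows $A \otimes B \cong A \otimes B \otimes \mathcal{Z}$ whenever $A, B \in \mathcal{C}$, and similarly for $C \otimes D$. So this final step reduces to verifying the standard regularity hypotheses and invoking the classification machinery.

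First, separability and unitality are immediate from the same properties for each tensor factor in $\mathcal{C}$. For simplicity, each $A, B, C, D \in \mathcal{C}$ is simple: when $Y = \emptyset$ this is \cite[Corollary 3.11]{AAFGJSV2024}, and when $Y \neq \emptyset$ meets each $\alpha$-orbit at most once, the orbit-breaking subalgebra $\mathcal{O}(\E_Y)$ is simple by the discussion preceding Theorem~\ref{centrally large}. Since these algebras are Cuntz--Pimsner algebras over the nuclear $\mathrm{C}^*$-algebra $C(X)$, they are nuclear, and hence the minimal tensor product of two simple factors in $\mathcal{C}$ is simple. For the UCT, each algebra in $\mathcal{C}$ satisfies it by \cite[Proposition 8.8]{Katsura2004} (as already invoked in the proof of Theorem~\ref{thm:classification1}), and the UCT is preserved under tensor products of separable nuclear $\mathrm{C}^*$-algebras that each satisfy it.

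Next, the $\mathcal{Z}$-stability of $A \otimes B$ and $C \otimes D$ is exactly the content of the immediately preceding theorem, which shows that for any $A, B \in \mathcal{C}$ one has $A \otimes B \cong A \otimes B \otimes \mathcal{Z}$, regardless of whether the underlying dynamical systems have the small boundary property. This is the key input and the only nontrivial step; all of the genuine difficulty (dimension ratio control via Theorem~\ref{thm:SubinTensorProd}, radius of comparison estimates, and the centrally large subalgebra argument) is already absorbed into that result.

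With all hypotheses verified, I would then apply Theorem~\ref{ClassThm} directly: since $A \otimes B$ and $C \otimes D$ are separable, unital, simple, nuclear, $\mathcal{Z}$-stable and satisfy the UCT, the given isomorphism
\[ \psi : \Ell(A \otimes B) \to \Ell(C \otimes D) \]
lifts to a $^*$-isomorphism $\Psi : A \otimes B \to C \otimes D$ with $\Ell(\Psi) = \psi$. There is no real obstacle here beyond bookkeeping; the statement is essentially a corollary of the tensor-product $\mathcal{Z}$-stability theorem combined with the Kirchberg--Phillips-style classification theorem quoted as Theorem~\ref{ClassThm}.
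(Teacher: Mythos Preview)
Your proposal is correct and matches the paper's intent: the paper gives no explicit proof of this theorem, treating it as an immediate corollary of the preceding $\mathcal{Z}$-stability result together with Theorem~\ref{ClassThm}, and the bookkeeping you supply (separability, unitality, simplicity, nuclearity, UCT, $\mathcal{Z}$-stability) is precisely the intended argument. One small remark: the paper's statement of Theorem~\ref{ClassThm} omits nuclearity from the hypotheses, which is presumably an oversight, and you correctly include it since it is needed both for the classification theorem and to conclude that the tensor product of simple factors is simple.
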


We finish with an example. Let us briefly recall the definition of the external tensor product of Hilbert bimodules.

    Suppose $A$ and $B$ are $C^*$-algebras, that $X$ is a Hilbert $A$-bimodule and $Y$ is a Hilbert $B$-bimodule. Then the algebraic tensor product $X \otimes_\mathbb{C} Y$ is an $A \otimes_{\mathbb{C}} B$-bimodule where the right and left actions are defined on simple tensors by
    \[ (a\otimes b)(x \otimes y) = (ax \otimes b y), \quad (x \otimes y) (a\otimes b) = (xa \otimes yb),\]
    for $a \in A$, $b \in B$, $x \in X$, and $y \in Y$. 
    There are unique $A \otimes_\mathbb{C} B$-valued pre-inner products on $X \otimes_\mathbb{C} Y$ such that
\begin{align*}
&_{A \otimes B} \langle x_1 \otimes y_1, x_2 \otimes y_2 \rangle=_A\langle x_1,x_2 \rangle \otimes _B\langle y_1, y_2 \rangle,\\
& \langle x_1 \otimes y_1, x_2 \otimes y_2 \rangle_{A \otimes B}=\langle x_1,x_2 \rangle_A \otimes \langle y_1, y_2 \rangle_B,
\end{align*}
for every $x_1, x_2 \in X$ and $y_1,y_2 \in Y$.
Then the completion of $X \otimes_\mathbb{C} Y$, denoted by $X \boxtimes Y$, is a Hilbert $A \otimes B$-bimodule.

\begin{example} In \cite{fowler}, Fowler defines a \emph{product system} of Hilbert bimodules $\{X_p\}_{p \in P}$ over a semigroup $P$, and from this data constructs a generalized Cuntz--Pimsner algebra. In the case that the semigroup is a group $G$, one can think of a product system as group action by Hilbert bimodules, and the resulting $\mathrm C^*$-algebra as a type of crossed product. When the (semi)group is $\mathbb N$ or $\mathbb Z$, one recovers the usual Cuntz--Pimsner algebra $\mathcal O_{X_0}(X_1)$.

When the group is $\mathbb Z^2$, a product system consists of a family
\[
X = \{\, X_{\mathbf n} : \mathbf n \in \mathbb Z^2 \,\}
\]
of Hilbert $A$-bimodules, together with a collection of $A$-bimodule isomorphisms
\[
\mu_{\mathbf m, \mathbf n} : X_{\mathbf m} \otimes_A X_{\mathbf n} 
\;\xrightarrow{\;\cong\;}\; X_{\mathbf m +\mathbf n},
\]
such that the following conditions hold:

\begin{enumerate}[label=\textup{(\arabic*)}]
\item $X_{\mathbf 0} = A$, and for each $\mathbf n \in \mathbb Z^2$,
\[
\mu_{\mathbf 0,\mathbf n}(a \otimes \xi) = a \cdot \xi, 
\quad
\mu_{0, \mathbf n}(\xi \otimes a) = \xi \cdot a.
\]

\item For every $\mathbf m, \mathbf n,\mathbf p \in \mathbb Z^2$, we have
\[
\mu_{\mathbf m + \mathbf n,\mathbf p} \circ (\mu_{\mathbf m, \mathbf n} \otimes \mathrm{id})
\;=\;
\mu_{\mathbf m, \mathbf n + p} \circ (\mathrm{id} \otimes \mu_{\mathbf n, \mathbf p}),
\] 

\item   For every $\mathbf n\in \mathbb Z^2$, $X_{- \mathbf n} \;\cong\; X_{\mathbf n}^*.$

\end{enumerate}

     Let $(X, \alpha)$ and $(Y, \beta)$ be a minimal dynamical systems of arbirtary mean dimension.  Choose line bundles $\mathscr V$ over $X$, $\mathscr W$ over $Y$ and let $\mathcal E :
    = \Gamma(\mathscr{V}, \alpha)$ and $\F := \Gamma(\mathscr{W}, \beta)$. 

 Let $Y = X \times X$.   For $(m,n) \in \mathbb Z^2$, let
    \[ B_{(1,0)} := \E \boxtimes C(X), \quad  B_{(0,1)} := C(X) \boxtimes \F, \]
where the $\boxtimes$ denotes the external tensor product of Hilbert bimodules. In particular, $B_{(1,0)}$ and $B_{(0,1)}$ are Hilbert $C(Y)$-bimodules.

For $(m,n) \in \mathbb Z^2$, define the Hilbert $C(Y)$-bimodule
\[
B_{(m,n)} = B_{(1,0)}^{\otimes m} \otimes_{C(Y)} B_{(0,1)}^{\otimes n},
\]
where the tensor product superscripts denoted the $m$- and $n$-fold interior tensor product of the $C(Y)$-modules of  $B_{(1,0)}$ and $B_{(0,1)}$, respectively.

If $X_1, X_2, Y_1, Y_2$ are any $C(X)$-bimodules, we have
\[  (X_1 \boxtimes X_2) \otimes_{C(Y)} (Y_1 \boxtimes Y_2) \cong (X_1 \otimes_{C(X)} Y_1) \boxtimes (X_2  \otimes_{C(X)} Y_2), \]
via the map satisfying
\[ (x_1 \boxtimes x_2) \otimes (y_1 \boxtimes y_2) \mapsto (x_1 \otimes y_1) \boxtimes (x_2 \otimes y_2). \]

 From this we have that 
 \[
 B_{(m,n)} \otimes_{C(Y)} B_{(m',n')} \cong  B_{(m',n')} \otimes_{C(Y)} B_{(m,n)} \cong B_{(m+m', n+n')}.
 \] 
 
 Then it is straightforward to check 
\[
\mathcal B = \{B_{(m,n)} \}_{(m,n) \in \mathbb Z^2}
\]
is a product system over $\mathbb Z^2$  with unit fibre $B_{(0,0)} \cong C(Y)$.

A \emph{covariant representation} of $\mathcal B$ in a $C^*$-algebra $C$ is a map $\psi : \mathcal B \to C$ such that for every $\psi_{(m,n)} := \psi|_{B_{(m,n)}}$, we have that $(\psi_{(0,0)}, \psi_{(m,n)})$ is a covariant representation of the Hilbert $B_{(0,0)}$-bimodule $B_{(m,n)}$, in the sense of ~\ref{sec:prelim}0.

Let $\mathcal O_B$ denote the associated generalized Cuntz--Pimsner algebra as in \cite[Proposition 2.9]{fowler}, which is generated by the universal covariant representation of $\mathcal B$, $\psi^{(u)}$.

Note that 
\[
(\psi_{(0,0)} \circ \id \otimes 1_{C(X)}, \psi_{(1,0)} \circ \id_{\E} \boxtimes 1_{C(X)})
\]
is a covariant representation of $\E$, and
\[
(\psi_{(0,0)} \circ 1_{C(X)} \otimes \id_{C(X)}, \psi_{(0,0)} \circ  1_{C(X)}\boxtimes \id_\F)
\]
is a covariant representation of $\E$. Thus there are surjective maps 
\[
\varphi_\E : \mathcal O(\E) \to C^*( \psi_{(0,0)} \circ \id \otimes 1_{C(X)}, \psi_{(1,0)} \circ \id_{\E} \boxtimes 1_{C(X)}) \subset \mathcal O_\mathcal{B},
\]
and 
\[
\varphi_\F: \mathcal O(\F) \to C^*( \psi_{(0,0)} \circ 1_{C(X)} \otimes \id_{C(X)}, \psi_{(0,0)} \circ  1_{C(X)}\boxtimes \id_\F) \subset \mathcal O_\mathcal{B},
\]
whose images commute. It follows that 
\[
\varphi_\E \otimes \varphi_\F : \mathcal O (\E) \otimes \mathcal O(\F) \to \mathcal O_\mathcal{B},
\]
is a well-defined $^*$-homomorphism, which is injective since $\mathcal O (\E) \otimes \mathcal O(\F)$ is simple. Moreover, it is easy to see that $\varphi(\E)$ and $\varphi(\F)$ generate $\mathcal O_{\mathcal {B}}$, hence $\varphi_\E \otimes \varphi_\F$ is also surjective. 

It follows that $\mathcal O_{\mathcal B} \cong \mathcal O(\E) \otimes \mathcal O(\F)$, and so  $\mathcal O_{\mathcal B}$ is $\mathcal Z$-stable.
\end{example}


\begin{thebibliography}{10}

\bibitem{AEE:Cross}
Beatriz Abadie, S\o{}ren Eilers, and Ruy Exel.
\newblock Morita equivalence for crossed products by {H}ilbert \mbox{{$\mathrm{C}^*$}-bimodules}.
\newblock {\em Trans. Amer. Math. Soc.}, 350(8):3043--3054, 1998.

\bibitem{AAFGJSV2024}
Maria~Stella Adamo, Dawn~E. Archey, Marzieh Forough, Magdalena~C. Georgescu, Ja~A Jeong, Maria~Grazia Viola, and Karen~R. Strung.
\newblock $\mathrm{C}^*$-algebras associated to homeomorphisms twisted by vector bundles over finite dimensional spaces.
\newblock {\em Trans. Amer. Math. Soc.}, 377(4):1597--1640, 2024.

\bibitem{ArBkPh-Z}
Dawn~E. Archey, Julian Buck, and N.~Christopher Phillips.
\newblock Centrally large subalgebras and tracial {$\mathcal{Z}$} absorption.
\newblock {\em Int. Math. Res. Not. IMRN}, (6):1857--1877, 2018.

\bibitem{ArchPhil:SR1}
Dawn~E. Archey and N.~Christopher Phillips.
\newblock Permanence of stable rank one for centrally large subalgebras and crossed products by minimal homeomorphisms.
\newblock {\em J. Operator Theory}, 83(2):353--389, 2020.

\bibitem{BBSTWW:2Col}
Joan Bosa, Nathanial~P. Brown, Yasuhiko Sato, Aaron Tikuisis, Stuart White, and Wilhelm Winter.
\newblock Covering dimension of {$\mathrm{C}^*$}-algebras and 2-coloured classification.
\newblock {\em Mem. Amer. Math. Soc.}, 257(1233):vii+97, 2019.

\bibitem{CGSTW:ClassI}
José~R. Carrion, Christopher~Schafhauser James~Gabe, Aaron Tikuisis, and Stuart White.
\newblock Classifying {$^*$}-homomorphisms {I}: unital simple nuclear {$\mathrm C^*$}-algebras.
\newblock arXiv:2307.06480[math.OA], 2023.

\bibitem{CETWW}
Jorge Castillejos, Samuel Evington, Aaron Tikuisis, Stuart White, and Wilhelm Winter.
\newblock Nuclear dimension of simple {$\mathrm{C}^*$}-algebras.
\newblock {\em Invent. Math.}, 224(1):245--290, 2021.

\bibitem{DirbMarMal2011}
Mat\'u\v{s} Dirb\'ak and Peter Mali\v{c}k\'y.
\newblock On the construction of non-invertible minimal skew products.
\newblock {\em J. Math. Anal. Appl.}, 375(2):436--442, 2011.

\bibitem{Dix:C*}
Jacques Dixmier.
\newblock {\em {$C\sp*$}-algebras}, volume Vol. 15 of {\em North-Holland Mathematical Library}.
\newblock North-Holland Publishing Co., Amsterdam-New York-Oxford, 1977.
\newblock Translated from the French by Francis Jellett.

\bibitem{EllGonLinNiu:ClaFinDecRan}
George~A. Elliott, Guihua Gong, Huaxin Lin, and Zhuang Niu.
\newblock On the classification of simple amenable {${\rm C}^*$}-algebras with finite decomposition rank, {II}.
\newblock {\em J. Noncommut. Geom.}, 19(1):73--104, 2025.

\bibitem{EllNiu:SBP25}
George~A. Elliott and Zhang Niu.
\newblock On the small boundary property and {$\mathcal Z$-absorption, {II}},.
\newblock arXiv:2504.03611[math.OA].

\bibitem{EllNiu:SBP}
George~A. Elliott and Zhang Niu.
\newblock On the small boundary property and {$\mathcal Z$}-absorption, and bauer simplexes.
\newblock arXiv:2406.09748[math.OA], 2024.

\bibitem{EllNiu:MeanDimZero}
George~A. Elliott and Zhuang Niu.
\newblock The {$\mathrm{C}^*$}-algebra of a minimal homeomorphism of zero mean dimension.
\newblock {\em Duke Math. J.}, 166(18):3569--3594, 2017.

\bibitem{Exel:Book}
Ruy Exel.
\newblock {\em Partial dynamical systems, {F}ell bundles and applications}, volume 224 of {\em Mathematical Surveys and Monographs}.
\newblock American Mathematical Society, Providence, RI, 2017.

\bibitem{Fell:OpFields}
J.~M.~G. Fell.
\newblock The structure of algebras of operator fields.
\newblock {\em Acta Math.}, 106:233--280, 1961.

\bibitem{FoJeSt:rsh}
Marzieh Forough, Ja~A Jeong, and Karen~R. Strung.
\newblock Recursive subhomogeneity of orbit-breaking subalgebras of $\mathrm{C}^*$-algebras associated to minimal homeomorphisms twisted by line bundles.
\newblock To appear in J. Funct. Anal., DOI: 10.1016/j.jfa.2025.111283, 2025.

\bibitem{fowler}
Neal~J. Fowler.
\newblock Discrete product systems of {H}ilbert bimodules.
\newblock {\em Pacific J. Math.}, 204(2):335--375, 2002.

\bibitem{GlasWeiss79}
S.~Glasner and B.~Weiss.
\newblock On the construction of minimal skew products.
\newblock {\em Israel J. Math.}, 34(4):321--336, 1979.

\bibitem{GongLinNiue:ZClass2}
Guihua Gong, Huaxin Lin, and Zhuang Niu.
\newblock A classification of finite simple amenable {$\mathcal Z$}-stable {${\mathrm C}^\ast$}-algebras, {II}: {${\mathrm C}^\ast$}-algebras with rational generalized tracial rank one.
\newblock {\em C. R. Math. Acad. Sci. Soc. R. Can.}, 42(4):451--539, 2020.

\bibitem{GongLinNiue:ZClass}
Guihua Gong, Huaxin Lin, and Zhuang Niu.
\newblock A classification of finite simple amenable {$\mathcal Z$}-stable {$\mathrm{C}^\ast$}-algebras, {I}: {$\mathrm{C}^\ast$}-algebras with generalized tracial rank one.
\newblock {\em C. R. Math. Acad. Sci. Soc. R. Can.}, 42(3):63--450, 2020.

\bibitem{GromovMD}
Misha Gromov.
\newblock Topological invariants of dynamical systems and spaces of holomorphic maps. {I}.
\newblock {\em Math. Phys. Anal. Geom.}, 2(4):323--415, 1999.

\bibitem{Hus:fibre}
Dale Husemoller.
\newblock {\em Fibre bundles}, volume~20 of {\em Graduate Texts in Mathematics}.
\newblock Springer-Verlag, New York, third edition, 1994.

\bibitem{Katsura2004}
Takeshi Katsura.
\newblock On {$\mathrm{C}^*$}-algebras associated with {$\mathrm{C}^*$}-correspondences.
\newblock {\em J. Funct. Anal.}, 217(2):366--401, 2004.

\bibitem{KerrSz:almostfinit}
David Kerr and G\'abor Szab\'o.
\newblock Almost finiteness and the small boundary property.
\newblock {\em Comm. Math. Phys.}, 374(1):1--31, 2020.

\bibitem{KirRor:CentralSeqZ-Stab}
Eberhard Kirchberg and Mikael R{\o}rdam.
\newblock Central sequence {$\mathrm C^*$}-algebras and tensorial absorption of the {J}iang-{S}u algebra.
\newblock {\em J. Reine Angew. Math.}, 695:175--214, 2014.

\bibitem{Lin:MD-SEF}
Elon Lindenstrauss.
\newblock Mean dimension, small entropy factors and an embedding theorem.
\newblock {\em {Inst. Hautes \'Etudes Sci. Publ. Math.}}, (89):227--262 (2000), 1999.

\bibitem{LindWeiss:MTD}
Elon Lindenstrauss and Benjamin Weiss.
\newblock Mean topological dimension.
\newblock {\em Israel J. Math.}, 115:1--24, 2000.

\bibitem{Niu:AHMeanDim}
Zhuang Niu.
\newblock Mean dimension and {AH}-algebras with diagonal maps.
\newblock {\em J. Funct. Anal.}, 266(8):4938--4994, 2014.

\bibitem{Phillips:recsub}
N.~Christopher Phillips.
\newblock Recursive subhomogeneous algebras.
\newblock {\em Trans. Amer. Math. Soc.}, 359(10):4595--4623 (electronic), 2007.

\bibitem{Sato:FinDimTrace}
Yasuhiko Sato.
\newblock Trace spaces of simple nuclear {$\mathrm{C}^*$}-algebras with finite-dimensional extreme boundary.
\newblock arXiv:1209.3000[math.OA], {2012}.

\bibitem{ShubWeiss1991}
M.~Shub and B.~Weiss.
\newblock Can one always lower topological entropy?
\newblock {\em Ergodic Theory Dynam. Systems}, 11(3):535--546, 1991.

\bibitem{TWW}
Aaron Tikuisis, Stuart White, and Wilhelm Winter.
\newblock Quasidiagonality of nuclear {$\mathrm{C}^\ast$}-algebras.
\newblock {\em Ann. of Math. (2)}, 185(1):229--284, 2017.

\bibitem{toms:ksd}
Andrew Toms.
\newblock K-theoretic rigidity and slow dimension growth.
\newblock {\em Invent. Math.}, 183(2):225--244, 2011.

\bibitem{Toms:growth}
Andrew~S. Toms.
\newblock Dimension growth for {$C^\ast$}-algebras.
\newblock {\em Adv. Math.}, 213(2):820--848, 2007.

\bibitem{Toms:CompSmoothDyn}
Andrew~S. Toms.
\newblock Comparison theory and smooth minimal {$\mathrm C^*$}-dynamics.
\newblock {\em Comm. Math. Phys.}, 289(2):401--433, 2009.

\bibitem{TomWhiWin:findim_Trace}
Andrew~S. Toms, Stuart White, and Wilhelm Winter.
\newblock {$\mathcal{Z}$}-stability and finite-dimensional tracial boundaries.
\newblock {\em Int. Math. Res. Not. IMRN}, (10):2702--2727, 2015.

\bibitem{Winter:subhomdr}
Wilhelm Winter.
\newblock Decomposition rank of subhomogeneous $\mathrm{C}^*$-algebras.
\newblock {\em Proc. London Math. Soc. (3)}, {89}(2):427--456, {2004}.

\bibitem{Win:pure}
Wilhelm Winter.
\newblock Nuclear dimension and {$\mathcal{Z}$}-stability of pure {$\mathrm C^*$}-algebras.
\newblock {\em Invent. Math.}, 187(2):259--342, 2012.

\end{thebibliography}

\end{document}